\numberwithin{equation}{section}
\providecommand{\U}[1]{\protect\rule{.1in}{.1in}}
\providecommand{\U}[1]{\protect \rule{.1in}{.1in}}
\newtheorem{theorem}{Theorem}[section]
\newtheorem{corollary}[theorem]{Corollary}
\newtheorem{definition}[theorem]{Definition}
\newtheorem{example}[theorem]{Example}
\newtheorem{lemma}[theorem]{Lemma}
\newtheorem{proposition}[theorem]{Proposition}
\newtheorem{remark}[theorem]{Remark}
\newenvironment{proof}[1][Proof]{\noindent \textbf{#1.} }{\  \rule{0.5em}{0.5em}}
\DeclareMathOperator*{\esssup}{ess\,sup}
\DeclareMathOperator*{\essinf}{ess\,inf}
\begin{document}
	
	\title{Optimal Multiple  Stopping Problem under Nonlinear Expectation}
	\author{ Hanwu Li\thanks{Center for Mathematical Economics, Bielefeld University,		hanwu.li@uni-bielefeld.de. This research was supported by the German Research Foundation (DFG) via CRC 1283.}}
	\date{}
	\maketitle
	
	\begin{abstract}
		In this paper, we study the optimal multiple stopping problem under the filtration consistent nonlinear expectations. The reward is given by a set of random variables satisfying some appropriate assumptions rather than an RCLL process. We first construct the optimal stopping time for the single stopping problem, which is no longer given by the first hitting time of processes. We then prove by induction that the value function of the multiple stopping  problem can be interpreted as the one for the single stopping problem associated with a new reward family, which allows us to construct the optimal multiple stopping times. If the reward family satisfies some strong regularity conditions, we show that the reward family and the value functions can be aggregated by some progressive processes. Hence, the optimal stopping times can be represented as hitting times.
	\end{abstract}
	
	\textbf{Key words}: nonlinear expectations, optimal stopping, multiple optimal stopping, aggregation. 
	
	\textbf{MSC-classification}: 60G40
	
	\section{Introduction}
	The optimal single stopping problem, both under uncertainty and ambiguity (or Knightian uncertainty, especially drift uncertainty), has attracted a great deal of attention and been well studied. We may refer to the papers \cite{BY1}, \cite{BY2}, \cite{CR}, \cite{PS}. Consider a filtered probability space $(\Omega,\mathcal{F},\{\mathcal{F}_t\}_{t\in[0,T]},P)$ satisfying the usual conditions of right-continuity and completeness. Given a nonnegative and adapted reward process $\{X_t\}_{t\in[0,T]}$ with some integrability and regularity conditions, we then define
	\begin{displaymath}
		V_0=\sup_{\tau\in\mathcal{S}_0}\mathcal{E}[X_\tau],
	\end{displaymath}
	where $\mathcal{S}_0$ is the collection of all stopping times taking values between $0$ and $T$. The operator  $\mathcal{E}[\cdot]$ corresponds to the classical expectation $E[\cdot]$ when the agent only faces risk or uncertainty (i.e., he does not know the future state but exactly knows the distribution of the reward process) while it corresponds to some nonlinear expectation if ambiguity is taken into account (i.e., the agent even has no full confidence about the distribution). Under both situations, the main objective is to compute the value $V_0$ as explicitly as possible and find some stopping time $\tau^*$ at which the supremum is attained, that is, $V_0=\mathcal{E}[X_{\tau^*}]$. For this purpose, consider the value function
	\begin{displaymath}
		V_t=\esssup_{\tau\in\mathcal{S}_t}\mathcal{E}_t[X_\tau],
	\end{displaymath}
	where $\mathcal{S}_t$ is the set of stopping times greater than $t$. When assuming some regularity of the reward family $\{X_t\}_{t\in[0,T]}$ and some appropiate conditions of the nonlinear conditional expectation $\mathcal{E}_t[\cdot]$, we prove that the process $\{V_t\}_{t\in[0,T]}$ admits an RCLL modification, for simplicity, still denoted by $\{V_t\}_{t\in[0,T]}$. Furthermore, the stopping time given in terms of the first hitting time
	\begin{displaymath}
		\tau=\inf\{t\geq 0: V_t=X_t\}
	\end{displaymath}
	is optimal and $\{V_t\}_{t\in[0,T]}$ is the smallest $\mathcal{E}$-supermartingale (which reduces to the classical supermartingale when $\mathcal{E}[\cdot]$ is the linear expectation) dominating the reward process $\{X_t\}_{t\in[0,T]}$. One of the most important applications of the single optimal stopping problem is pricing for American options.
	
	Motivated by the pricing for financial derivatives with several exercise rights in the energy market (swing options), one needs to solve an optimal multiple stopping problem. Mathematically, given a reward process $\{X_t\}_{t\in[0,T]}$, if an agent has $d$ exercise rights, the price of this contract is defined as follows:
	\begin{displaymath}
		v_0=\sup_{(\tau_1,\cdots,\tau_d)\in\mathcal{\widetilde{S}}_0^d}E[\sum_{i=1}^{d}X_{\tau_i}].
	\end{displaymath}
	To avoid triviality, we assume that there exists a constant $\delta>0$, which represents the length of the reftracting time interval, such that the difference of any two successive exercises is greater than $\delta$. Therefore, $\widetilde{S}_0^d$ is the collection of stopping times $(\tau_1,\cdots,\tau_d)$ such that $\tau_1\geq 0$ and $\tau_j-\tau_{j-1}\geq \delta$, for any $j=2,\cdots,d$. There are several papers concerning this kind of problem. To name a few, \cite{BS}, \cite{MH} mainly deal with the discrete time case focusing on the Monto Carlo methods and algorithm, \cite{CT} investigates the continuous time case and the time horizon can be both finite and infinite. It is worth pointing out that all the existing literature does not consider the multiple stopping problem under Knightian uncertainty.

	In fact, to make the value function well-defined for both the single and multiple stopping problems, the reward can be given by a set of random variables $\{X(\tau),\tau\in\mathcal{S}_0\}$ satisfying some compatibility properties, which means that we do not need to assume that the reward family can be aggregated into a progressive process. Under this weaker assumption on reward family, \cite{E} and \cite{KQR} established the existence of the optimal stopping times for the single stopping problem and multiple stopping problem respectively. Without aggregation of the reward family and the value function, the optimal stopping time is no longer given by the first hitting time of processes but by the essential infimum over an appropriate set of stopping times.
	
	In the present work, we study the multiple stopping problem under Knightian uncertainty without the requirement of aggregation of the reward family. We will use the filtration consistent nonlinear expectations established in \cite{BY1} to model Knightian uncertainty. First, we focus on the single stopping problem.  Similar with the classical case, the value function is a kind of nonlinear supermartingale which is the smallest one dominating the reward family. Besides, the value function shares the same regularity with the reward family in the single stopping case. Applying an approximation method, we prove the existence of the optimal stopping times under the assumption that the reward family is continuous along stopping times under nonlinear expectation (see Definition \ref{d}). It is important to note that in proving the existence of optimal stopping times, we need the assumption that the nonlinear expectation is sub-additive and positive homogenous, which is to say that the nonliear expectation is an upper expectation. Hence, this optimal stopping problem is in fact a ``$\sup_\tau \sup_P$" problem.
	
	For the multiple stopping case, one important observation is that the value function of the $d$-stopping problem coincides with the one of the single stopping case corresponding to a new reward family, where the new reward family is given by the maximum of a set of value functions associated with the $(d-1)$-stopping problem. Therefore, we may construct the optimal stopping times by an induction mehtod providing that this new reward family satisfies the conditions under which the optimal single stopping time exists.  This is the main difficulty lies in this problem due to some measurability issues. To overcome this problem, we need to slightly modify the reward family to a new one and to establish the regularity of the induced value functions.
	
	Recall that, in \cite{BY2} and \cite{CR} for the single stopping problems under Knightian uncertainty, the reward is given by an RCLL, adapted process and the optimal stopping time can be represented as first hitting time, which provides an efficient way to calculation an optimal stopping time. In our setting, if the reward family satisfies some stronger regularity conditions than those made in the existence result, we may prove that the reward family and the associated value function can be aggregated into some progressively measurable processes. Therefore, in this case, the optimal stopping times can be interpreted in terms of hitting times of processes.
	
	The paper is organized as follows. In Section 2, we  investigate the properties of the value function and construct the optimal stopping times for  the optimal single stopping problem under nonlinear expectations. Then we solve the  optimal double stopping problem under nonlinear expectations in Section 3.  In Section 4, we generalize this result to the optimal $d$-stopping problem. When the reward family satisfies some strong regularity, we study some aggregation results in Secion 5 and then interpret the optimal stopping times as the first hitting times of processes.
	
\section{The optimal single stopping problem under nonlinear expectation}	
	In this paper, we fix a finite time horizon $T>0$. Let $(\Omega,\mathcal{F},P)$ be a complete probability space equipped with a filtration $\mathbb{F}=\{\mathcal{F}_t\}_{t\in[0,T]}$ satisfying the usual conditions of right-continuity and completeness. We deonte by $L^0(\mathcal{F}_T)$ the collection of all $\mathcal{F}_T$ measurable random variables. An $\mathbb{F}$-expectation is a pair $(\mathcal{E},\textrm{Dom}(\mathcal{E}))$, where $\mathcal{E}$ is a nonlinear operator defined on its domain Dom$(\mathcal{E})$ (for the definition, we may refer to Definition \ref{D2.1} and \ref{D2.2}). Throughout this paper, we assume that the $\mathbb{F}$-expectation satisfies the hypotheses (H0)-(H4) in the Appendix and the following condition:
	\begin{description}
		\item[(H5)] if the sequence $\{\xi_n\}_{n\in\mathbb{N}}\subset \textrm{Dom}^+(\mathcal{E})$ converges to $\xi\in L^0(\mathcal{F}_T)$, a.s. and satisfies $\liminf_{n\rightarrow\infty}\mathcal{E}[\xi_n]<\infty$, then we have $\xi\in\textrm{Dom}^+(\mathcal{E})$.
	\end{description}
    This assumption is mainly used to prove the following lemma.	

    \begin{lemma}\label{L4.1}
    	Let $\Xi$ be a subset of $\textrm{Dom}^+(\mathcal{E})$.	Suppose that $\sup_{\xi\in\Xi}\mathcal{E}[\xi]<\infty$. Set $\eta=\esssup_{\xi\in\Xi}\xi$. Then, we have $\eta\in\textrm{Dom}^+(\mathcal{E})$.
    \end{lemma}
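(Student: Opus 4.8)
The plan is to realize $\eta$ as the almost-sure limit of an increasing sequence built from $\Xi$ (after a harmless enlargement) and then to read off membership in $\textrm{Dom}^+(\mathcal{E})$ from the closedness property (H5). Recall the classical description of the essential supremum: there is a countable subfamily $\{\xi_n\}_{n\in\mathbb{N}}\subset\Xi$ with $\eta=\sup_{n}\xi_n$ almost surely, and, if the family is directed upward (stable under pairwise maximum), the partial maxima form an almost surely increasing sequence whose limit is $\eta$. So the first step is to reduce to a directed family. I would replace $\Xi$ by its closure under finite maxima, $\Xi'=\{\xi_{1}\vee\cdots\vee\xi_{k}:k\in\mathbb{N},\ \xi_{i}\in\Xi\}$; this does not change the essential supremum, so that $\esssup_{\zeta\in\Xi'}\zeta=\esssup_{\xi\in\Xi}\xi=\eta$, and $\Xi'$ is directed upward by construction.

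Next I would check that $\Xi'\subset\textrm{Dom}^+(\mathcal{E})$, using that the domain is stable under pairwise maximum. This should follow from the monotonicity of $\mathcal{E}$ together with the cone (order-ideal) structure of $\textrm{Dom}^+(\mathcal{E})$ guaranteed by the standing hypotheses (H0)--(H4), since for nonnegative $\xi,\zeta$ one has $\xi\vee\zeta\le\xi+\zeta$, and $\xi+\zeta\in\textrm{Dom}^+(\mathcal{E})$. Applying the essential-supremum description to the directed family $\Xi'$, I obtain an almost surely increasing sequence $\eta_{n}\in\Xi'$ with $\eta_{n}\uparrow\eta$ almost surely, with each $\eta_n\in\textrm{Dom}^+(\mathcal{E})$.

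It then remains to verify the quantitative hypothesis of (H5), namely $\liminf_{n\to\infty}\mathcal{E}[\eta_n]<\infty$, and this is exactly where the finiteness assumption $\sup_{\xi\in\Xi}\mathcal{E}[\xi]<\infty$ must be brought to bear. This is the main obstacle: forcing the approximants $\eta_n$ to increase to $\eta$ requires taking maxima, and the naive bound $\eta_n\le\xi_1+\cdots+\xi_n$ only yields $\mathcal{E}[\eta_n]\le\sum_{i\le n}\mathcal{E}[\xi_i]$, a bound that grows with $n$ and is therefore useless. The delicate point is thus to control $\mathcal{E}$ along the increasing sequence by the uniform constant $m:=\sup_{\xi\in\Xi}\mathcal{E}[\xi]$, and I would do this by exploiting the directedness of $\Xi'$ together with the structural properties of $\mathcal{E}$ from (H0)--(H4), rather than by crude additivity estimates. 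Once $\liminf_{n\to\infty}\mathcal{E}[\eta_n]<\infty$ is established for the sequence $\eta_n\uparrow\eta$, hypothesis (H5) applies directly and gives $\eta\in\textrm{Dom}^+(\mathcal{E})$, which is the assertion. I expect essentially all of the real work to lie in the bound on $\mathcal{E}[\eta_n]$; the reduction to a directed family and the final invocation of (H5) are routine.
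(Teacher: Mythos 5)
Your proposal stalls at exactly the step you flag as ``the main obstacle,'' and that gap cannot be closed: after enlarging $\Xi$ to the upward-directed family $\Xi'$ of finite maxima and extracting an increasing sequence $\eta_n\uparrow\eta$ from $\Xi'$, there is in general no way to deduce $\liminf_{n\to\infty}\mathcal{E}[\eta_n]<\infty$ from $\sup_{\xi\in\Xi}\mathcal{E}[\xi]<\infty$. The hypothesis controls $\mathcal{E}$ only on $\Xi$, not on $\Xi'$, and the expectation of a maximum is not controlled by the maximum of the expectations, even in the linear case: on $[0,1]$ with Lebesgue measure take $\xi_k=2^kI_{(2^{-k-1},2^{-k}]}$, so that $E[\xi_k]=1/2$ for every $k$ while $E[\xi_1\vee\cdots\vee\xi_n]=n/2\to\infty$. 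So no increasing sequence of approximants drawn from $\Xi'$ can satisfy the $\liminf$ condition of (H5), and deferring this estimate to ``the structural properties of $\mathcal{E}$'' is precisely where the argument breaks down. (The same example, with $\Xi=\{\xi_k\}_{k\in\mathbb{N}}$ and $\eta=\esssup_k\xi_k=\sum_k\xi_k\notin L^1$, shows the obstruction is not an artifact of your method: for an arbitrary subset $\Xi$ the conclusion itself can fail for the classical expectation, so some additional structure on $\Xi$ is genuinely needed.)

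The paper's own proof takes no maxima at all: it asserts that the essential-supremum theorem yields a sequence $\{\xi_n\}_{n\in\mathbb{N}}\subset\Xi$ itself with $\xi_n\to\eta$ a.s., whence $\liminf_n\mathcal{E}[\xi_n]\le\sup_{\xi\in\Xi}\mathcal{E}[\xi]<\infty$ is immediate and (H5) applies directly --- note that (H5) requires only a.s.\ convergence, not monotone convergence, so no running maxima are needed. A sequence from $\Xi$ converging a.s.\ to $\eta$ (rather than merely satisfying $\sup_n\xi_n=\eta$) is guaranteed when $\Xi$ is upward directed, which holds in the paper's applications because the families $\{\mathcal{E}_S[X(\tau)]\}$ are shown to be stable under pairwise maximization by pasting stopping times. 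Your instinct that directedness is the crux is therefore right; the resolution is that the directedness must live inside $\Xi$, where $\mathcal{E}$ is uniformly bounded, rather than be manufactured by passing to $\Xi'$, which destroys the bound.
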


    \begin{proof}
    	By the definition of essential supremum, there exists a sequence $\{\xi_n\}_{n\in\mathbb{N}}\subset \Xi$ such that $\xi_n\rightarrow \eta$, a.s. Since $\liminf_{n\rightarrow\infty}\mathcal{E}[\xi_n]\leq \sup_{\xi\in\Xi}\mathcal{E}[\xi]<\infty$, Assumption (H5) implies that $\eta\in \textrm{Dom}^+(\mathcal{E})$. 
    \end{proof}

	It is worth pointing out that the $\mathbb{F}$-expectation satisfying (H0)-(H4) preserves almost all properties as the classical expectation, such as strict monotonicity, translation invariance, time consistency, local property. For more details, we may refer to the Appendix.
	
	\begin{example}\label{e1}
		The following pairs are $\mathbb{F}$-expectations satisfying (H0)-(H4):
		\begin{description}
			\item[(1)] $(\{E_t[\cdot]\}_{t\in[0,T]}, L^1(\mathcal{F}_T))$: the classical expectation $E$;
			\item[(2)] $(\{\mathcal{E}^g_t[\cdot],L^2(\mathcal{F}_T)\})$: the $g$-expectation with Lipschitz generator $g(t,z)$ which is progressively measurable, square integrable and satisfies $g(t,0)=0$ (see \cite{BY2},  \cite{CHMP});
			\item[(3)] $(\{\mathcal{E}^g_t[\cdot],L^e(\mathcal{F}_T)\})$: the $g$-expectation with convex generator $g(t,z)$ having quadratic growth in $z$ and satisfying $g(t,0)=0$, where $L^e(\mathcal{F}_T):=\{\xi\in L^0(\mathcal{F}_T): E[\exp(\lambda|\xi|)]<\infty, \forall \lambda>0\}$ (see \cite{BY2}).
		\end{description}
	\end{example}

    \begin{remark}
    	The classical expectation naturally satisfies Assumption (H5) by using the Fatou lemma. However, for the $g$-expetation, this condition may not hold. We refer to Example 5.1 in \cite{BY2} as a counterexample.
    \end{remark}
	
	Throughout this paper, for each fixed stopping time $\tau$, $\mathcal{S}_\tau$ represents the collection of all stopping times taking values between $\tau$ and $T$. We now introduce the definition of admissible family which can be interpreted as the payoff process in the classical case.
	\begin{definition}
		A family of random variables $\{X(\tau),\tau\in\mathcal{S}_0\}$ is said to be admissible if the following conditions are satisfied:
		\begin{description}
			\item[(1)] for all $\tau\in\mathcal{S}_0$, $X(\tau)\in\textrm{Dom}^+_\tau(\mathcal{E}) $;
			\item[(2)] for all $\tau,\sigma\in\mathcal{S}_0$, we have $X(\tau)=X(\sigma)$ a.s. on the set $\{\tau=\sigma\}$.
		\end{description}
	\end{definition}

    \begin{remark}
    	Since the $\mathbb{F}$-expectation is translation invariance, all the results in this paper still hold if the family of random viables $\{X(\tau),\tau\in\mathcal{S}_0\}$ is bounded from below.
    \end{remark}
	
	Now consider the reward given by the admissible family $\{X(\tau),\tau\in\mathcal{S}_0\}$. For each $S\in\mathcal{S}_0$, the value function at time $S$ takes the following form:
	\begin{equation}\label{1.1}
	v(S)=\esssup_{\tau\in\mathcal{S}_S}\mathcal{E}_S[X(\tau)].
	\end{equation}
	
	\begin{definition}
		For each fixed $S\in\mathcal{S}_0$, an admissible family $\{X(\tau),\tau\in\mathcal{S}_S\}$ is said to be an $\mathcal{E}$-supermartingale system (resp. an $\mathcal{E}$-martingale system) if, for any $\tau,\sigma\in\mathcal{S}_S$ with $\tau\leq \sigma$ a.s., we have
		\begin{displaymath}
		\mathcal{E}_\tau[X(\sigma)]\leq X(\tau), \textrm{ a.s. } (\textrm{resp., } 	\mathcal{E}_\tau[X(\sigma)]= X(\tau), \textrm{ a.s. }).
		\end{displaymath}
	\end{definition}
	
	\begin{proposition}\label{P1}
		If $\{X(\tau),\tau\in\mathcal{S}_0\}$ is an admissible family with $\sup_{\tau\in\mathcal{S}_0}\mathcal{E}[X(\tau)]<\infty$, then the value function $\{v(S), S\in\mathcal{S}_0\}$ defined by \eqref{1.1} satisfies the following properties: 
		\begin{description}
			\item[(i)] $\{v(S), S\in\mathcal{S}_0\}$ is an admissible family;
			\item[(ii)] $\{v(S), S\in\mathcal{S}_0\}$ is the smallest $\mathcal{E}$-supermartingale system which is greater than $\{X(S), S\in\mathcal{S}_0\}$;
			\item[(iii)] for any $S\in\mathcal{S}_0$, we have \begin{equation}\label{1.3}
			\mathcal{E}[v(S)]=\sup_{\tau\in\mathcal{S}_S}\mathcal{E}[X(\tau)].
			\end{equation}
		\end{description}
	\end{proposition}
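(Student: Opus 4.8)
The plan is to reduce all three assertions to one structural fact: that for each fixed $S\in\mathcal{S}_0$ the family $\{\mathcal{E}_S[X(\tau)]:\tau\in\mathcal{S}_S\}$ appearing in \eqref{1.1} is upward directed. First I would prove this directedness. Given $\tau_1,\tau_2\in\mathcal{S}_S$, put $A=\{\mathcal{E}_S[X(\tau_1)]\geq\mathcal{E}_S[X(\tau_2)]\}\in\mathcal{F}_S$ and $\tau_3=\tau_1\mathbf{1}_A+\tau_2\mathbf{1}_{A^c}$, which again lies in $\mathcal{S}_S$. Admissibility of $\{X(\tau)\}$ gives $X(\tau_3)=X(\tau_1)$ on $A$ and $X(\tau_3)=X(\tau_2)$ on $A^c$, so the local property of $\mathcal{E}$ yields $\mathcal{E}_S[X(\tau_3)]=\mathcal{E}_S[X(\tau_1)]\vee\mathcal{E}_S[X(\tau_2)]$. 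Thus the family is stable under pairwise maxima, and the standard characterization of the essential supremum provides an increasing sequence $\{\tau_n\}\subset\mathcal{S}_S$ with $\mathcal{E}_S[X(\tau_n)]\uparrow v(S)$ a.s. This sequence is the workhorse of everything that follows.

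For (i), the $\mathcal{F}_S$-measurability of $v(S)$ is immediate, and membership in $\textrm{Dom}^+_S(\mathcal{E})$ follows from Lemma \ref{L4.1} applied to $\Xi=\{\mathcal{E}_S[X(\tau)]:\tau\in\mathcal{S}_S\}$: each element lies in $\textrm{Dom}^+(\mathcal{E})$, and by the tower property $\mathcal{E}[\mathcal{E}_S[X(\tau)]]=\mathcal{E}[X(\tau)]\leq\sup_{\tau\in\mathcal{S}_0}\mathcal{E}[X(\tau)]<\infty$, so $\sup_{\xi\in\Xi}\mathcal{E}[\xi]<\infty$. For the compatibility condition I would fix $S,S'$ and work on $\{S=S'\}\in\mathcal{F}_S\cap\mathcal{F}_{S'}$: for $\tau\in\mathcal{S}_S$ the pasted time $\tilde\tau=\tau\mathbf{1}_{\{S=S'\}}+T\mathbf{1}_{\{S\neq S'\}}\in\mathcal{S}_{S'}$ agrees with $\tau$ on $\{S=S'\}$, so admissibility and the local property give $\mathcal{E}_S[X(\tau)]\mathbf{1}_{\{S=S'\}}=\mathcal{E}_{S'}[X(\tilde\tau)]\mathbf{1}_{\{S=S'\}}\leq v(S')\mathbf{1}_{\{S=S'\}}$; taking the essential supremum over $\tau$ and then exchanging the roles of $S$ and $S'$ gives $v(S)=v(S')$ a.s. on $\{S=S'\}$.

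For (ii), domination is clear since $S\in\mathcal{S}_S$ forces $v(S)\geq\mathcal{E}_S[X(S)]=X(S)$. To get the $\mathcal{E}$-supermartingale property I would fix $S\leq\sigma$ and use the increasing sequence $\{\tau_n\}\subset\mathcal{S}_\sigma$ with $\mathcal{E}_\sigma[X(\tau_n)]\uparrow v(\sigma)$: continuity of $\mathcal{E}$ along increasing sequences together with the tower property gives $\mathcal{E}_S[v(\sigma)]=\lim_n\mathcal{E}_S[\mathcal{E}_\sigma[X(\tau_n)]]=\lim_n\mathcal{E}_S[X(\tau_n)]\leq v(S)$, since each $\tau_n\in\mathcal{S}_\sigma\subseteq\mathcal{S}_S$. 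Minimality is a one-line comparison: if $\{Y(S)\}$ is any $\mathcal{E}$-supermartingale system dominating $\{X(S)\}$, then for every $\tau\in\mathcal{S}_S$ monotonicity and the supermartingale inequality give $\mathcal{E}_S[X(\tau)]\leq\mathcal{E}_S[Y(\tau)]\leq Y(S)$, whence $v(S)\leq Y(S)$. For (iii), the bound $\mathcal{E}[v(S)]\geq\sup_{\tau\in\mathcal{S}_S}\mathcal{E}[X(\tau)]$ comes from $v(S)\geq\mathcal{E}_S[X(\tau)]$ and $\mathcal{E}[\mathcal{E}_S[X(\tau)]]=\mathcal{E}[X(\tau)]$, while the reverse bound uses the sequence $\tau_n$ at $S$ and monotone continuity to write $\mathcal{E}[v(S)]=\lim_n\mathcal{E}[X(\tau_n)]\leq\sup_{\tau\in\mathcal{S}_S}\mathcal{E}[X(\tau)]$, establishing \eqref{1.3}.

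The step I expect to be the main obstacle is the passage to the limit through the nonlinear operator in (ii) and (iii). It is legitimate only after two things are secured: first that $v(S)\in\textrm{Dom}^+(\mathcal{E})$, so that $\mathcal{E}[v(S)]$ and $\mathcal{E}_S[v(\sigma)]$ are even defined, which is exactly where Lemma \ref{L4.1} (and hence assumption (H5)) enters; and second that the approximating sequence can be taken monotone, which is precisely what the directedness argument of the first paragraph guarantees. Once these are in place the continuity hypothesis on $\mathcal{E}$ closes every limit, and the remaining manipulations reduce to the tower property, monotonicity, and the local property.
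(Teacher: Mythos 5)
Your proof is correct and follows essentially the same route as the paper's: directedness of $\{\mathcal{E}_S[X(\tau)],\tau\in\mathcal{S}_S\}$ via pasting on $\mathcal{F}_S$-sets, Lemma \ref{L4.1} for membership in $\textrm{Dom}^+(\mathcal{E})$, pasted stopping times of the form $\tau I_A+TI_{A^c}$ for the compatibility condition, and passage to the limit along the monotone approximating sequence for the supermartingale property, minimality, and \eqref{1.3}. The one point to tighten is that the stated hypotheses do not directly provide ``continuity of $\mathcal{E}$ along increasing sequences,'' so the equality $\mathcal{E}_S[v(\sigma)]=\lim_n\mathcal{E}_S[X(\tau_n)]$ is not immediate; the paper instead uses Fatou's lemma (Proposition \ref{P2.8}) to obtain only $\mathcal{E}_S[v(\sigma)]\leq\liminf_n\mathcal{E}_S[X(\tau_n)]\leq v(S)$, which is all that is needed (and if you do want the equality, it follows from the dominated convergence part of Proposition \ref{P2.8}, since the increasing sequence is dominated by its limit $v(\sigma)$, which lies in $\textrm{Dom}^+(\mathcal{E})$ by Lemma \ref{L4.1}).
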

	
	\begin{proof}
	(i)	By Lemma \ref{L4.1}, we have $v(S)\in \textrm{Dom}^+(\mathcal{E})$. It is obvious that $\mathcal{E}_S[X(\tau)]$ is  $\mathcal{F}_S$-measurable for any $S\in\mathcal{S}_0$ and $\tau\in\mathcal{S}_S$. So is $v(S)$. Therefore, we obtain that $v(S)\in \textrm{Dom}^+_S(\mathcal{E})$.
		
		 Now consider two stopping times $\tau,\sigma\in\mathcal{S}_0$. For each $\theta\in\mathcal{S}_\tau$, set $\theta_A=\theta I_A+TI_{A^c}$, where $A=\{\tau=\sigma\}\in \mathcal{F}_\tau\cap \mathcal{F}_\sigma$. Noting that $\theta_A\in\mathcal{S}_\sigma$, it is easy to check that
		\begin{displaymath}
		\mathcal{E}_\tau[X(\theta)]I_A=\mathcal{E}_\tau[X(\theta)I_A]=\mathcal{E}_\tau[X(\theta_A)I_A]=\mathcal{E}_\tau[X(\theta_A)]I_A=\mathcal{E}_\sigma[X(\theta_A)] I_A\leq v(\sigma)I_A.
		\end{displaymath}
		Taking essential supremum over all $\theta\in\mathcal{S}_\tau$ yields that $v(\tau)\leq v(\sigma)$ on the set $A$. By a similar analysis, we have $v(\tau)\geq v(\sigma)$ on the set $A$. Therefore, $\{v(S), S\in\mathcal{S}_0\}$ is an admissible family.
		
	(ii)	We first claim that the set $\{\mathcal{E}_S[X(\tau)],\tau\in\mathcal{S}_S\}$ is upward directed for any $S\in\mathcal{S}_0$. Indeed, for any $\tau,\sigma\in\mathcal{S}_S$, set $\theta_B=\tau I_B+\sigma I_{B^c}$, where $B=\{\mathcal{E}_S[X(\tau)]\geq \mathcal{E}_S[X(\sigma)]\}\in\mathcal{F}_S$. It is easy to check that $\theta_B\in\mathcal{S}_S$ and
	\begin{displaymath}
	\mathcal{E}_S[X(\theta_B)]=\mathcal{E}_S[X(\tau)]I_B+\mathcal{E}_S[X(\sigma)] I_{B^c}=\mathcal{E}_S[X(\tau)]\vee \mathcal{E}_S[X(\sigma)].
	\end{displaymath}
	Hence, the claim holds true. It follows that for any $S\in\mathcal{S}_0$, there exists a sequence of stopping times $\{\tau^n\}_{n\in\mathbb{N}}\subset \mathcal{S}_S$, such that $\mathcal{E}_S[X(\tau^n)]$ converges monotonically up to $v(S)$. For stopping times $S,\sigma\in\mathcal{S}_0$ with $S\geq \sigma$, by Fatou's Lemma (see Proposition \ref{P2.8}), we have
		\begin{equation}\label{1.0}
		\mathcal{E}_\sigma[v(S)]\leq \liminf_{n\rightarrow\infty}\mathcal{E}_\sigma[\mathcal{E}_S[X(\tau^n)]]=\liminf_{n\rightarrow\infty}\mathcal{E}_\sigma[X(\tau^n)]\leq v(\sigma),
		\end{equation}
		which implies that $\{v(S), S\in\mathcal{S}_0\}$ is an $\mathcal{E}$-supermartingale system.
		
		Now we show that $\{v(S), S\in\mathcal{S}_0\}$ is the smallest $\mathcal{E}$-supermartingale system dominating the family $\{X(S),S\in\mathcal{S}_0\}$. It is trivial to check that $v(S)\geq X(S)$ for any $S\in\mathcal{S}_0$. Suppose that $\{v'(S),S\in\mathcal{S}_0\}$ is another $\mathcal{E}$-supermartingale system which is greater than $\{X(S),S\in\mathcal{S}_0\}$. Given $S\in\mathcal{S}_0$, for each $\tau\in\mathcal{S}_S$, we  obtain that
		\begin{displaymath}
		v'(S)\geq \mathcal{E}_S[v'(\tau)]\geq \mathcal{E}_S[X(\tau)].
		\end{displaymath}
		Taking essential supremum over all $\tau\in\mathcal{S}_S$ yields that $v'(S)\geq v(S)$. 
		
		(iii) By choosing $\sigma=0$ in Equation \eqref{1.0}, we have
		\begin{displaymath}
		\mathcal{E}[v(S)]\leq \liminf_{n\rightarrow\infty}\mathcal{E}[X(\tau^n)]\leq \sup_{\tau\in\mathcal{S}_S}\mathcal{E}[X(\tau)].
		\end{displaymath}
		The inverse inequality holds true due to the fact that $v(S)\geq \mathcal{E}_S[X(\tau)]$ for any $\tau\in\mathcal{S}_S$. The proof is complete. 
	\end{proof}
	
	\begin{remark}\label{R1}
		(1) Compared with the value function defined in \cite{BY2} and \cite{CR}, there may not exist an adapted process $\{v_t,0\leq t\leq T\}$ which aggregates the admissible family $\{v(\tau),\tau\in\mathcal{S}_0\}$ in the sense that $v_\tau=v(\tau)$ a.s. for any $\tau\in\mathcal{S}_0$.
		
		(2) It follows from Equation \eqref{1.3} that 
		\begin{displaymath}
			\sup_{S\in\mathcal{S}_0}\mathcal{E}[v(S)]\leq \sup_{\tau\in\mathcal{S}_0}\mathcal{E}[X(\tau)]<\infty.
		\end{displaymath}
		Consequently, we obtain that $v(S)<\infty$, a.s. for any $S\in\mathcal{S}_0$.
		
		(3) The Assumption (H5) is mainly used to make sure that the value function $v(S)$ at any stopping time $S$ belongs to $\textrm{Dom}^+(\mathcal{E})$. We can drop this assumption by requiring that the admissible family satisfies $\eta:=\esssup_{\tau\in\mathcal{S}_0} X(\tau)\in \textrm{Dom}(\mathcal{E})$. Under this new condition, since $0\leq v(S)\leq \mathcal{E}_S[\eta]$, it follows that $v(S)\in\textrm{Dom}^+(\mathcal{E})$.
	\end{remark}

	The following proposition gives the characterization of the optimal stopping time for the value function \eqref{1.1}.
	
	\begin{proposition}\label{P2}
		For each fixed $S\in\mathcal{S}_0$, let $\tau^*\in\mathcal{S}_S$ be such that $\mathcal{E}[X(\tau^*)]<\infty$. The following statements are equivalent:
		\begin{description}
			\item[(a)] $\tau^*$ is $S$-optimal for $v(S)$, i.e.,
			\begin{equation}\label{1.2}
			v(S)=\mathcal{E}_S[X(\tau^*)];
			\end{equation}
			\item[(b)] $v(\tau^*)=X(\tau^*)$ and $\mathcal{E}[v(S)]=\mathcal{E}[v(\tau^*)]$;
			\item[(c)] $\mathcal{E}[v(S)]=\mathcal{E}[X(\tau^*)]$.
		\end{description}
	\end{proposition}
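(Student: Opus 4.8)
The plan is to establish all three equivalences through the cyclic chain (a) $\Rightarrow$ (b) $\Rightarrow$ (c) $\Rightarrow$ (a). The ingredients I would rely on are all available: the $\mathcal{E}$-supermartingale property of the value system from Proposition \ref{P1}(ii), namely $\mathcal{E}_S[v(\tau^*)]\leq v(S)$ since $S\leq\tau^*$; the domination $v(\tau^*)\geq X(\tau^*)$, also from Proposition \ref{P1}(ii); and two structural properties of the $\mathbb{F}$-expectation, time consistency $\mathcal{E}[\mathcal{E}_S[\xi]]=\mathcal{E}[\xi]$ and strict monotonicity, i.e. if $\xi\geq\eta$ with $\xi,\eta\in\textrm{Dom}(\mathcal{E})$ and $\mathcal{E}[\xi]=\mathcal{E}[\eta]$ then $\xi=\eta$ a.s. The standing assumption $\mathcal{E}[X(\tau^*)]<\infty$ together with Remark \ref{R1}(2) guarantees that every random variable appearing below lies in the domain, so that these tools may be applied.

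For (a) $\Rightarrow$ (b), I would start from $v(S)=\mathcal{E}_S[X(\tau^*)]$ and squeeze the conditional value of $v(\tau^*)$: combining the supermartingale inequality $\mathcal{E}_S[v(\tau^*)]\leq v(S)$ with the chain $\mathcal{E}_S[v(\tau^*)]\geq \mathcal{E}_S[X(\tau^*)]=v(S)$, which uses monotonicity of $\mathcal{E}_S$ and the domination $v(\tau^*)\geq X(\tau^*)$, yields $\mathcal{E}_S[v(\tau^*)]=v(S)=\mathcal{E}_S[X(\tau^*)]$. Applying $\mathcal{E}[\cdot]$ and time consistency then gives $\mathcal{E}[v(\tau^*)]=\mathcal{E}[v(S)]=\mathcal{E}[X(\tau^*)]$, which is the second equality in (b). Since $v(\tau^*)\geq X(\tau^*)$ while their $\mathcal{E}$-values coincide, strict monotonicity forces $v(\tau^*)=X(\tau^*)$ a.s., which is the first part of (b).

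The implication (b) $\Rightarrow$ (c) is immediate: $v(\tau^*)=X(\tau^*)$ turns $\mathcal{E}[v(S)]=\mathcal{E}[v(\tau^*)]$ into $\mathcal{E}[v(S)]=\mathcal{E}[X(\tau^*)]$. For (c) $\Rightarrow$ (a), I would use that $\tau^*\in\mathcal{S}_S$ makes $\mathcal{E}_S[X(\tau^*)]$ one of the competitors in the essential supremum defining $v(S)$, so $v(S)\geq\mathcal{E}_S[X(\tau^*)]$; applying $\mathcal{E}[\cdot]$ and time consistency gives $\mathcal{E}[v(S)]\geq\mathcal{E}[\mathcal{E}_S[X(\tau^*)]]=\mathcal{E}[X(\tau^*)]$, and (c) says this is an equality. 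Strict monotonicity applied to $v(S)\geq\mathcal{E}_S[X(\tau^*)]$ then upgrades it to the a.s. identity $v(S)=\mathcal{E}_S[X(\tau^*)]$, which is (a).

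The only delicate point, and the step I would treat most carefully, is the passage from an equality of $\mathcal{E}$-values to an almost sure identity of random variables, which is exactly where strict monotonicity is indispensable; one must check beforehand that both sides genuinely lie in $\textrm{Dom}(\mathcal{E})$ so that strict monotonicity applies. This is precisely why the finiteness hypothesis $\mathcal{E}[X(\tau^*)]<\infty$ is imposed and why Remark \ref{R1}(2), guaranteeing $v(S)<\infty$ a.s., is invoked. Everything else is a direct bookkeeping of the supermartingale inequality, the domination property, and time consistency.
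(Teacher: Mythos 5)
Your proposal is correct and follows essentially the same route as the paper: the cyclic chain (a)$\Rightarrow$(b)$\Rightarrow$(c)$\Rightarrow$(a), with the same squeeze $v(S)\geq\mathcal{E}_S[v(\tau^*)]\geq\mathcal{E}_S[X(\tau^*)]$ and the same use of strict monotonicity to upgrade equality of $\mathcal{E}$-values of ordered random variables to an a.s.\ identity. The only cosmetic difference is that the paper phrases the strict-monotonicity step as a contradiction at the conditional level $\mathcal{E}_S$ while you invoke it at the level of $\mathcal{E}=\mathcal{E}_0$, which is equivalent.
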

	
	\begin{proof}
		(a)$\Rightarrow$(b): By Proposition \ref{P1}, we have $v(\tau^*)\geq X(\tau^*)$ and
		\begin{displaymath}
		v(S)\geq \mathcal{E}_S[v(\tau^*)]\geq \mathcal{E}_S[X(\tau^*)]=v(S).
		\end{displaymath}
		Taking $\mathcal{E}$-expectation on both sides implies that $\mathcal{E}[v(S)]=\mathcal{E}[v(\tau^*)]$. If $P(v(\tau^*)>X(\tau^*))>0$, by the strict monotonicity, we get that $\mathcal{E}_S[v(\tau^*)]> \mathcal{E}_S[X(\tau^*)]$, which leads to a contradiction. Therefore, we have $v(\tau^*)=X(\tau^*)$.
		
		(b)$\Rightarrow$(c): This conclusion is trivial.
		
		(c)$\Rightarrow$(a): Since $v(S)\geq \mathcal{E}_S[X(\tau^*)]$, we assume that $P(v(S)\geq \mathcal{E}_S[X(\tau^*)])>0$. Again by the strict monotonicity, we have $\mathcal{E}[v(S)]>\mathcal{E}[\mathcal{E}_S[X(\tau^*)]]=\mathcal{E}[X(\tau^*)]$, which is a contradiction.
	\end{proof}

     \begin{remark}\label{r2}
    	It is worth mentioning that most of the results in Proposition \ref{P1} and \ref{P2} still hold if the reward family is not ``adapted", which means that $X(\tau)$ is $\mathcal{F}_T$-measurable rather than $\mathcal{F}_\tau$-measurable for any $\tau\in\mathcal{S}_0$. In fact, the first difference is that $\{v(S),S\in\mathcal{S}_0\}$ is the smallest $\mathcal{E}$-supermartingale system which is greater than $\{\mathcal{E}_S[X(S)],S\in\mathcal{S}_0\}$. The second is that we need to replace $X(\tau^*)$ by $\mathcal{E}_{\tau^*}[X(\tau^*)]$ in assertion (b) of Proposition \ref{P2}. Besides, all the results do not depend on the regularity of the reward family.
    \end{remark}
	
	Now, we study the regularity of the value functions $\{v(\tau),\tau\in\mathcal{S}_0\}$ after introducing the following definition on continuity.
	\begin{definition}\label{d}
		An admissible family $\{X(\tau),\tau\in\mathcal{S}_0\}$ is said to be right-continuous (resp. left-continuous)  along stopping times in $\mathcal{E}$-expectation [RC$\mathcal{E}$ (resp., LC$\mathcal{E}$)] if for any $\tau\in\mathcal{S}_0$ and $\{\tau_n\}_{n\in\mathbb{N}}\subset\mathcal{S}_0$ such that $\tau_n\downarrow \tau$ a.s. (resp., $\tau_n\uparrow\tau$ a.s.), we have $\mathcal{E}[X(\tau)]=\lim_{n\rightarrow\infty}\mathcal{E}[X(\tau_n)]$. The family $\{X(\tau),\tau\in\mathcal{S}_0\}$ is called continuous along stopping times in $\mathcal{E}$-expectation (C$\mathcal{E}$) if it is both RC$\mathcal{E}$ and LC$\mathcal{E}$.
	\end{definition}
	
	
	\begin{proposition}\label{P3}
		Suppose the admissible family $\{X(\tau),\tau\in\mathcal{S}_0\}$ is RC$\mathcal{E}$ with $\sup_{\tau\in\mathcal{S}_0}\mathcal{E}[X(\tau)]<\infty$. Then the family $\{v(\tau),\tau\in\mathcal{S}_0\}$ is RC$\mathcal{E}$.
	\end{proposition}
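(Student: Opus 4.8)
The plan is to reduce the RC$\mathcal{E}$ property of the value family $\{v(\tau),\tau\in\mathcal{S}_0\}$ to the scalar identity already established in Proposition \ref{P1}(iii), namely $\mathcal{E}[v(S)]=\sup_{\sigma\in\mathcal{S}_S}\mathcal{E}[X(\sigma)]$. This is the crucial move: RC$\mathcal{E}$ is, by Definition \ref{d}, a statement only about the real numbers $\mathcal{E}[v(\tau_n)]$, so I never have to control the random variables $v(\tau_n)$ themselves (which need not converge in any pathwise sense, cf. Remark \ref{R1}(1)); I only have to handle suprema of $\mathcal{E}$-expectations. Fix $\tau\in\mathcal{S}_0$ and a sequence $\{\tau_n\}\subset\mathcal{S}_0$ with $\tau_n\downarrow\tau$ a.s. Since $\tau_n\geq\tau$ we have $\mathcal{S}_{\tau_n}\subseteq\mathcal{S}_\tau$, so by Proposition \ref{P1}(iii), $\mathcal{E}[v(\tau_n)]=\sup_{\sigma\in\mathcal{S}_{\tau_n}}\mathcal{E}[X(\sigma)]\leq \sup_{\sigma\in\mathcal{S}_\tau}\mathcal{E}[X(\sigma)]=\mathcal{E}[v(\tau)]$ for every $n$, which immediately yields $\limsup_{n\to\infty}\mathcal{E}[v(\tau_n)]\leq\mathcal{E}[v(\tau)]$. (The finiteness of all these quantities is guaranteed by the standing assumption $\sup_{\sigma\in\mathcal{S}_0}\mathcal{E}[X(\sigma)]<\infty$.)

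For the reverse inequality I would approximate $v(\tau)$ from within the smaller stopping-time sets $\mathcal{S}_{\tau_n}$. Given an arbitrary $\sigma\in\mathcal{S}_\tau$, set $\sigma_n=\sigma\vee\tau_n$. Then $\sigma_n\in\mathcal{S}_{\tau_n}$, and since $\sigma\geq\tau$ the sequence satisfies $\sigma_n\downarrow\sigma\vee\tau=\sigma$ a.s. Now I invoke the RC$\mathcal{E}$ assumption on the \emph{reward} family $\{X(\tau),\tau\in\mathcal{S}_0\}$ applied to this decreasing sequence, obtaining $\mathcal{E}[X(\sigma_n)]\to\mathcal{E}[X(\sigma)]$. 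Because $\sigma_n\in\mathcal{S}_{\tau_n}$, Proposition \ref{P1}(iii) gives $\mathcal{E}[v(\tau_n)]\geq\mathcal{E}[X(\sigma_n)]$, so passing to the $\liminf$ yields $\liminf_{n\to\infty}\mathcal{E}[v(\tau_n)]\geq\mathcal{E}[X(\sigma)]$. Since $\sigma\in\mathcal{S}_\tau$ was arbitrary, taking the supremum over $\sigma$ produces $\liminf_{n\to\infty}\mathcal{E}[v(\tau_n)]\geq\sup_{\sigma\in\mathcal{S}_\tau}\mathcal{E}[X(\sigma)]=\mathcal{E}[v(\tau)]$.

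Combining the two bounds gives $\lim_{n\to\infty}\mathcal{E}[v(\tau_n)]=\mathcal{E}[v(\tau)]$, which is exactly RC$\mathcal{E}$ for $\{v(\tau),\tau\in\mathcal{S}_0\}$. I expect the only genuinely delicate point to be the construction $\sigma_n=\sigma\vee\tau_n$ and the verification that it transfers the right-continuity hypothesis from $X$ to $v$: one must check that $\sigma_n$ is admissible in $\mathcal{S}_{\tau_n}$ and that the monotone convergence $\sigma_n\downarrow\sigma$ is of exactly the type required by Definition \ref{d}. Everything else is a routine interchange of suprema and limits, made possible precisely because Proposition \ref{P1}(iii) has already linearized the value function into a scalar optimization over stopping times.
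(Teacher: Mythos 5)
Your proof is correct and follows essentially the same route as the paper's: both reduce the claim to the scalar identity $\mathcal{E}[v(S)]=\sup_{\sigma\in\mathcal{S}_S}\mathcal{E}[X(\sigma)]$ from Proposition \ref{P1}(iii) and both hinge on the construction $\sigma\vee\tau_n\downarrow\sigma$ together with the RC$\mathcal{E}$ hypothesis on $X$. The only difference is presentational --- the paper argues by contradiction with an explicit $\varepsilon$, while you give a direct $\limsup$/$\liminf$ sandwich, which is arguably cleaner.
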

	
	\begin{proof}
		Otherwise, assume that $\{v(\tau),\tau\in\mathcal{S}_0\}$ is not RC$\mathcal{E}$ at some $S\in\mathcal{S}_0$. Noting that $\{v(\tau),\tau\in\mathcal{S}_0\}$ is a supermartingale system, it follows that $\mathcal{E}[v(\tau)]\leq \mathcal{E}[v(\sigma)]$ for any $\tau\geq \sigma$. Then, there exist a constant $\varepsilon>0$ and a sequence of stopping times $\{\tau_n\}_{n\in\mathbb{N}}$ with $\tau_n\downarrow S$, such that
		\begin{equation}\label{1.4}
		\sup_{n\in\mathbb{N}}\mathcal{E}[v(\tau_n)]+\varepsilon\leq \mathcal{E}[v(S)].
		\end{equation}
		Recalling Equation \eqref{1.3}, there exists some $\sigma\in\mathcal{S}_S$, such that
		\begin{displaymath}
		\sup_{n\in\mathbb{N}}\sup_{\tau\in\mathcal{S}_{\tau_n}}\mathcal{E}[X(\tau)]+\frac{\varepsilon}{2}\leq \mathcal{E}[X(\sigma)],
		\end{displaymath}
		which implies that for any $n\in\mathbb{N}$,
		\begin{displaymath}
		\mathcal{E}[X(\sigma\vee\tau_n)]+\frac{\varepsilon}{2}\leq \mathcal{E}[X(\sigma)].
		\end{displaymath}
		Since $(\sigma\vee\tau_n)\downarrow \sigma$, by the RC$\mathcal{E}$ property of $X$, we deduce that $\mathcal{E}[X(\sigma)]+\frac{\varepsilon}{2}\leq \mathcal{E}[X(\sigma)]$, which is a contradiction.
	\end{proof}

    \begin{remark}\label{r1}
    (i)	By Remark \ref{r2}, the above result does not rely on the ``adapted'' property of the reward family.
    	
    (ii)	For any fixed $\sigma\in\mathcal{S}_0$, suppose that the admissible family $\{X(\tau),\tau\in\mathcal{S}_0\}$  is right-continuous in $\mathcal{E}$-expectation along all stopping times greater than $\sigma$, which means that if $S\in\mathcal{S}_\sigma$ and $\{S_n\}_{n\in\mathbb{N}}\subset\mathcal{S}_\sigma$ satisfy $S_n\downarrow S$, then we have $\lim_{n\rightarrow\infty}\mathcal{E}[X(S_n)]=\mathcal{E}[X(S)]$. Following the proof of Proposition \ref{P3}, the family $\{v(\tau),\tau\in\mathcal{S}_0\}$ is right-continuous in $\mathcal{E}$-expectation along all stopping times greater than $\sigma$.
    	
    (iii)	Furthermore, if the RC$\mathcal{E}$ admissible family $\{X(\tau),\tau\in\mathcal{S}_\sigma\}$ is only well-defined for the stopping times greater than $\sigma$, by a similar analysis as the proof of Proposition \ref{P1}, $\{v(S),S\in\mathcal{S}_0\}$ is still an $\mathcal{E}$-supermartingale system but without the dominance that $v(S)\geq X(S)$ for $S\leq \sigma$. Again following the proof of Proposition \ref{P3}, the family $\{v(\tau),\tau\in\mathcal{S}_0\}$ is right-continuous in $\mathcal{E}$-expectation along all stopping times greater than $\sigma$.
    \end{remark}
	
	In order to show the existence of the optimal stopping time for the value function $v(S)$, we need to furthermore assume that the $\mathbb{F}$-expectation $(\mathcal{E},\textrm{Dom}(\mathcal{E}))$ satisfies the following conditions:
	\begin{description}
			\item[(H6)] ``Sub-additivity": for any $\tau\in\mathcal{S}_0$ and $\xi,\eta\in\textrm{Dom}^+(\mathcal{E})$, $\mathcal{E}_\tau[\xi+\eta]\leq \mathcal{E}_\tau[\xi]+\mathcal{E}_\tau[\eta]$;
		\item[(H7)] ``Positive homogeneity": for any $\tau\in\mathcal{S}_0$, $\lambda\geq 0$ and $\xi\in \textrm{Dom}^+(\mathcal{E})$, $\mathcal{E}_\tau[\lambda \xi]=\lambda\mathcal{E}_\tau[\xi]$.
	\end{description}
	
	The main idea to prove the existence is applying an approximation method. More precisely, for $\lambda\in(0,1)$, we define an $\mathcal{F}_S$-measurable random variable $\tau^\lambda(S)$ by
	\begin{equation}\label{1.6}
	\tau^\lambda(S)=\essinf\{\tau\in\mathcal{S}_S: \lambda v(\tau)\leq X(\tau),\textrm{ a.s.}\}.
	\end{equation}
	We will show that the sequence $\{\tau^\lambda(S)\}_{\lambda\in(0,1)}$ admits a limit as $\lambda$ goes to $1$ and the limit is the optimal stopping time. Our first observation is that the stopping time $\tau^\lambda(S)$ is $(1-\lambda)$-optimal for the problem \eqref{1.3}.
	\begin{lemma}\label{L1}
		Let the $\mathbb{F}$-expectation $(\mathcal{E},\textrm{Dom}(\mathcal{E}))$ satisfy all the assumptions (H0)-(H7) and suppose that $\{X(\tau),\tau\in\mathcal{S}_0\}$ is a C$\mathcal{E}$ admissible family with $\sup_{\tau\in\mathcal{S}_0}\mathcal{E}[X(\tau)]<\infty$. For each $S\in\mathcal{S}_0$ and $\lambda\in(0,1)$, the stopping time $\tau^\lambda(S)$ satisfies that
		\begin{equation}\label{1.8}
		\lambda \mathcal{E}[v(S)]\leq \mathcal{E}[X(\tau^\lambda(S))].
		\end{equation}
	\end{lemma}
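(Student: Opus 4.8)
The plan is to realize $\tau^\lambda(S)$ as the limit of a minimizing sequence of its defining set and then combine two ingredients: a limiting inequality coming from $\mathrm{RC}\mathcal{E}$ regularity, and the fact that the value family loses no mass before $\tau^\lambda(S)$. First I would set $\mathcal{R}^\lambda_S=\{\tau\in\mathcal{S}_S:\lambda v(\tau)\leq X(\tau)\ \text{a.s.}\}$. This set is nonempty, since $v(T)=X(T)\geq 0$ forces $T\in\mathcal{R}^\lambda_S$, and it is stable under pairwise minimum: for $\tau,\sigma\in\mathcal{R}^\lambda_S$ the admissibility of both $\{v(\tau)\}$ and $\{X(\tau)\}$ gives $\lambda v(\tau\wedge\sigma)\leq X(\tau\wedge\sigma)$ on $\{\tau\leq\sigma\}$ and, symmetrically, on $\{\sigma<\tau\}$. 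Hence $\mathcal{R}^\lambda_S$ is downward directed, so by the definition of the essential infimum there is a decreasing sequence $\tau_n\in\mathcal{R}^\lambda_S$ with $\tau_n\downarrow\tau^\lambda(S)$ a.s.; right-continuity of $\mathbb{F}$ then makes $\tau^\lambda(S)\in\mathcal{S}_S$ a genuine stopping time.

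Second, I would pass the inequality $\lambda v(\tau_n)\leq X(\tau_n)$ to the limit in $\mathcal{E}$-expectation. Monotonicity together with positive homogeneity (H7) yields $\lambda\mathcal{E}[v(\tau_n)]\leq\mathcal{E}[X(\tau_n)]$; since $\tau_n\downarrow\tau^\lambda(S)$, the $\mathrm{RC}\mathcal{E}$ property of $\{X(\tau)\}$ and that of $\{v(\tau)\}$ (Proposition \ref{P3}) give $\mathcal{E}[X(\tau_n)]\to\mathcal{E}[X(\tau^\lambda(S))]$ and $\mathcal{E}[v(\tau_n)]\to\mathcal{E}[v(\tau^\lambda(S))]$, whence
\[
\lambda\mathcal{E}[v(\tau^\lambda(S))]\leq\mathcal{E}[X(\tau^\lambda(S))].
\]
All expectations here are finite because $\sup_{\tau}\mathcal{E}[X(\tau)]<\infty$ and Proposition \ref{P1} bounds $\mathcal{E}[v(\cdot)]$ by it.

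The main obstacle is the remaining ``no-loss'' inequality $\mathcal{E}[v(S)]\leq\mathcal{E}[v(\tau^\lambda(S))]$; combined with the previous display and homogeneity it closes the proof via $\lambda\mathcal{E}[v(S)]\leq\lambda\mathcal{E}[v(\tau^\lambda(S))]\leq\mathcal{E}[X(\tau^\lambda(S))]$. Since $\{v(\tau)\}$ is an $\mathcal{E}$-supermartingale system, the reverse inequality $\mathcal{E}[v(\tau^\lambda(S))]\leq\mathcal{E}[v(S)]$ is automatic, so what must be established is that $v$ is actually an $\mathcal{E}$-martingale between $S$ and $\tau^\lambda(S)$. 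The structural fact I would exploit is that strictly before $\tau^\lambda(S)$ one has $\lambda v(\tau)>X(\tau)$: indeed, if $\lambda v(\tau)\leq X(\tau)$ held on a set $B\in\mathcal{F}_\tau$ of positive measure with $\tau<\tau^\lambda(S)$ on $B$, then $\tau I_B+T I_{B^c}$ would lie in $\mathcal{R}^\lambda_S$ and contradict the definition of $\tau^\lambda(S)$. Thus on $\{\tau<\tau^\lambda(S)\}$ the value stays strictly above the reward, i.e. we are in the continuation region.

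Consequently, the plan for this last step is to derive the martingale property from the strict gap $\lambda v>X$ together with the minimality of $v$ as the smallest $\mathcal{E}$-supermartingale dominating $\{X(\tau)\}$ (Proposition \ref{P1}(ii)): were $v$ to lose mass between $S$ and $\tau^\lambda(S)$, the strict gap would leave room to lower $v$ on the stochastic interval before $\tau^\lambda(S)$ while keeping it above the reward family, producing a strictly smaller $\mathcal{E}$-supermartingale system still dominating $\{X(\tau)\}$ and contradicting minimality. Making this perturbation rigorous in the aggregation-free setting—localizing to the random interval, verifying the $\mathcal{E}$-supermartingale inequality across $\tau^\lambda(S)$, and controlling the essential infima through the $\mathrm{RC}\mathcal{E}$ regularity—is where the real work lies; I expect this continuation-region martingale argument, rather than the two limiting estimates above, to be the crux of the proof.
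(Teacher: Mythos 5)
Your first two steps reproduce the paper's argument exactly: the set $\{\tau\in\mathcal{S}_S:\lambda v(\tau)\leq X(\tau)\}$ is stable under pairwise minimum, so a decreasing sequence $\tau_n\downarrow\tau^\lambda(S)$ exists, and monotonicity, (H7) and the RC$\mathcal{E}$ property of $X$ and of $v$ (Proposition \ref{P3}) give $\lambda\mathcal{E}[v(\tau^\lambda(S))]\leq\mathcal{E}[X(\tau^\lambda(S))]$. You have also correctly isolated the remaining ingredient, namely the $\mathcal{E}$-martingale property $v(S)=\mathcal{E}_S[v(\tau^\lambda(S))]$ (this is precisely Lemma \ref{L2} in the paper). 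The problem is that you do not prove it: you propose to derive a contradiction by ``lowering $v$ on the stochastic interval before $\tau^\lambda(S)$'' using the strict gap $\lambda v>X$, and you explicitly defer the construction of this perturbation. That deferred step is the entire content of the lemma, so as written the proposal has a genuine gap. Moreover, the perturbation as you describe it is delicate: an ad hoc modification of $v$ on $[S,\tau^\lambda(S))$ must remain an $\mathcal{E}$-supermartingale system \emph{across} $\tau^\lambda(S)$ and must dominate $X(\tau)$ for \emph{every} stopping time, and in the nonlinear, aggregation-free setting there is no off-the-shelf way to do this.

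The paper closes this gap with a specific choice of perturbation that you are missing. Set $J^\lambda(S)=\mathcal{E}_S[v(\tau^\lambda(S))]$ and $L^\lambda(S)=\lambda v(S)+(1-\lambda)J^\lambda(S)$. One checks that $\{J^\lambda(\tau)\}$ is an $\mathcal{E}$-supermartingale system (using $\tau^\lambda(S)\leq\tau^\lambda(S')$ for $S\leq S'$ and time consistency), hence so is $\{L^\lambda(\tau)\}$ by sub-additivity (H6) and positive homogeneity (H7); and $L^\lambda(S)\geq X(S)$ by splitting on $\{\tau^\lambda(S)=S\}$ (where $J^\lambda(S)=v(S)\geq X(S)$) and $\{\tau^\lambda(S)>S\}$ (where, exactly as in your continuation-region observation, $\lambda v(S)\geq X(S)$). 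Minimality of $v$ (Proposition \ref{P1}(ii)) then forces $L^\lambda(S)\geq v(S)$, i.e. $(1-\lambda)J^\lambda(S)\geq(1-\lambda)v(S)$, so $J^\lambda(S)\geq v(S)$; the reverse inequality is the supermartingale property, and the martingale identity follows. Note that $L^\lambda\leq v$, so this convex combination \emph{is} the rigorous version of the ``lowered supermartingale'' you had in mind; without it (or an equivalent device) your argument does not go through.
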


    To prove this lemma, we first need the following lemma.
    
    	\begin{lemma}\label{L2}
    	Under the same assumptions as Lemma \ref{L1}, for each $S\in\mathcal{S}_0$ and $\lambda\in(0,1)$, the stopping time $\tau^\lambda(S)$ satisfies that
    	\begin{equation}\label{1.11}
    	v(S)=\mathcal{E}_S[v(\tau^\lambda(S))].
    	\end{equation}
    \end{lemma}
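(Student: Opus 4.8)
The plan is to establish the two inequalities in \eqref{1.11} separately; write $\theta:=\tau^\lambda(S)$ for brevity. The inequality $\mathcal{E}_S[v(\theta)]\le v(S)$ is immediate: since $\theta\in\mathcal{S}_S$ we have $\theta\ge S$, and $\{v(\tau),\tau\in\mathcal{S}_0\}$ is an $\mathcal{E}$-supermartingale system by Proposition \ref{P1}(ii), so $\mathcal{E}_S[v(\theta)]\le v(S)$ a.s. Everything therefore rests on the reverse inequality $v(S)\le\mathcal{E}_S[v(\theta)]$.

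First I would record the structural fact that the obstacle is strictly inactive strictly before $\theta$. Set $\mathcal{T}=\{\tau\in\mathcal{S}_S:\lambda v(\tau)\le X(\tau)\textrm{ a.s.}\}$. Using the admissibility of both $\{X(\tau)\}$ and $\{v(\tau)\}$ (the latter from Proposition \ref{P1}(i)), one checks that $\tau_1\wedge\tau_2\in\mathcal{T}$ whenever $\tau_1,\tau_2\in\mathcal{T}$, by testing on the sets $\{\tau_1\le\tau_2\}$ and $\{\tau_1>\tau_2\}$; hence $\mathcal{T}$ is downward directed and there is a sequence $\tau_k\in\mathcal{T}$ with $\tau_k\downarrow\theta$. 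Next, for an arbitrary $\rho\in\mathcal{S}_S$ put $B=\{\rho<\theta\}\cap\{\lambda v(\rho)\le X(\rho)\}$ and $\rho_B=\rho I_B+T I_{B^c}$. Since $v(T)=X(T)\ge 0$ gives $\lambda v(T)\le X(T)$, the time $\rho_B$ lies in $\mathcal{T}$, so $\theta\le\rho_B$; but $\rho_B=\rho<\theta$ on $B$, forcing $P(B)=0$. Thus $X(\rho)<\lambda v(\rho)$ a.s. on $\{\rho<\theta\}$ for every $\rho\in\mathcal{S}_S$.

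For the reverse inequality I would first reduce it to a statement about scalars. Because $v(S)\ge\mathcal{E}_S[v(\theta)]$ pointwise and $\mathcal{E}[\mathcal{E}_S[v(\theta)]]=\mathcal{E}[v(\theta)]$ by time consistency, the strict monotonicity of $\mathcal{E}$ (exactly as in the proof of Proposition \ref{P2}) shows that $v(S)=\mathcal{E}_S[v(\theta)]$ will follow once I prove $\mathcal{E}[v(S)]\le\mathcal{E}[v(\theta)]$. By Proposition \ref{P1}(iii) this is the inequality $\sup_{\tau\in\mathcal{S}_S}\mathcal{E}[X(\tau)]\le\sup_{\tau\in\mathcal{S}_\theta}\mathcal{E}[X(\tau)]$, i.e. that one loses nothing by restricting the single-stopping problem to times beyond $\theta$. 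To prove it, fix $\varepsilon>0$ and choose $\tau\in\mathcal{S}_S$ with $\mathcal{E}[X(\tau)]\ge\mathcal{E}[v(S)]-\varepsilon$. Writing $A=\{\tau<\theta\}\in\mathcal{F}_\tau$ and $\rho=\tau\vee\theta\in\mathcal{S}_\theta$, the strict inactivity gives $X(\tau)\le\lambda v(\tau)I_A+X(\rho)$, whence by subadditivity (H6) and positive homogeneity (H7) one gets $\mathcal{E}[X(\tau)]\le\lambda\mathcal{E}[v(\tau)I_A]+\sup_{\sigma\in\mathcal{S}_\theta}\mathcal{E}[X(\sigma)]$. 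The argument closes with the sharp constant because the pre-$\theta$ term $\mathcal{E}[v(\tau)I_A]$ must itself be small for a near-optimal $\tau$: stopping on $A$, where $X(\tau)<\lambda v(\tau)$, is suboptimal by a margin proportional to $(1-\lambda)v(\tau)$, and comparing $\tau$ with the competitor that instead continues optimally on $A$ (again admissible, so of value at most $\mathcal{E}[v(S)]$) bounds $(1-\lambda)\mathcal{E}[v(\tau)I_A]$ by a multiple of $\varepsilon$. Feeding this back and letting $\varepsilon\downarrow 0$ yields $\mathcal{E}[v(S)]\le\mathcal{E}[v(\theta)]$.

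The genuine difficulty, and the step I expect to be most delicate, is precisely this control of the pre-$\theta$ contribution $\mathcal{E}[v(\tau)I_A]$: it is the place where the supermartingale order structure alone is insufficient, and where the sublinearity of $\mathcal{E}$ together with the continuity C$\mathcal{E}$ of the reward family enter, playing here the role that the flatness of the increasing part of the Doob--Meyer decomposition before the contact time plays in the classical, aggregated theory. Making the phrase ``continue optimally on $A$'' rigorous (pasting an $\mathcal{F}_\tau$-measurable family of $\varepsilon$-optimal continuations via the local property and time consistency) and then passing to the limit is where the bulk of the technical care will be needed.
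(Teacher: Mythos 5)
Your reduction to the scalar inequality $\mathcal{E}[v(S)]\le\mathcal{E}[v(\tau^\lambda(S))]$ via strict monotonicity is fine, as is the ``strict inactivity'' observation and the estimate $\mathcal{E}[X(\tau)]\le\lambda\,\mathcal{E}[v(\tau)I_A]+\sup_{\sigma\in\mathcal{S}_{\theta}}\mathcal{E}[X(\sigma)]$. The gap is exactly at the step you yourself flag as delicate: controlling $\mathcal{E}[v(\tau)I_A]$. Your argument needs a \emph{lower} bound of the form
\begin{displaymath}
\mathcal{E}\bigl[v(\tau)I_A+X(\tau)I_{A^c}\bigr]-\mathcal{E}\bigl[\lambda v(\tau)I_A+X(\tau)I_{A^c}\bigr]\;\geq\;c\,(1-\lambda)\,\mathcal{E}[v(\tau)I_A],
\end{displaymath}
so that near-optimality of $\tau$ forces $\mathcal{E}[v(\tau)I_A]$ to be small. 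But (H6) delivers only $\mathcal{E}[\xi+\eta]-\mathcal{E}[\eta]\le\mathcal{E}[\xi]$, i.e.\ the \emph{upper} bound $(1-\lambda)\mathcal{E}[v(\tau)I_A]$ on that increment; the reverse inequality is a super-additivity statement that a genuinely sublinear $\mathcal{E}$ does not satisfy. Concretely, for $\mathcal{E}=\max(E_{P_1},E_{P_2})$ a near-optimal $\tau$ may attain $\mathcal{E}[X(\tau)]=E_{P_1}[X(\tau)]$ with $P_1(A)$ tiny while $P_2(A)$ is large: then replacing $X(\tau)$ by $v(\tau)$ on $A$ barely changes the competitor's $\mathcal{E}$-value (computed under $P_1$), yet $\mathcal{E}[v(\tau)I_A]\ge E_{P_2}[v(\tau)I_A]$ stays bounded away from $0$, because the worst-case measure for $v(\tau)I_A$ alone is not the one attaining $\mathcal{E}[X(\tau)]$. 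So the ``suboptimality margin'' reasoning, which is correct for the linear expectation, does not transfer, and the term $\lambda\mathcal{E}[v(\tau)I_A]$ in your final estimate need not vanish as $\varepsilon\downarrow 0$.

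The paper avoids any such subtraction. It sets $J^\lambda(S)=\mathcal{E}_S[v(\tau^\lambda(S))]$, shows that $J^\lambda$ is an $\mathcal{E}$-supermartingale system (time consistency plus $\tau^\lambda(S)\le\tau^\lambda(S')$ for $S\le S'$), and then proves that $L^\lambda(S)=\lambda v(S)+(1-\lambda)J^\lambda(S)$ dominates $X(S)$ — splitting on $\{\tau^\lambda(S)=S\}$, where $J^\lambda(S)=v(S)$, and on $\{\tau^\lambda(S)>S\}$, where your strict-inactivity fact gives $\lambda v(S)\ge X(S)$. Here (H6) and (H7) are used only in the direction they actually provide, namely $\mathcal{E}_\sigma[L^\lambda(S)]\le\lambda\mathcal{E}_\sigma[v(S)]+(1-\lambda)\mathcal{E}_\sigma[J^\lambda(S)]$, which makes $L^\lambda$ an $\mathcal{E}$-supermartingale system dominating $X$; the minimality of $v$ from Proposition \ref{P1}(ii) then yields $L^\lambda(S)\ge v(S)$, and since $v(S)<\infty$ one cancels to get $J^\lambda(S)\ge v(S)$. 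If you want to keep your architecture, you would need to replace your competitor comparison by this convex-combination/minimality device (or otherwise prove the inequality measure-by-measure in a representation of $\mathcal{E}$, which is not available at this level of generality).
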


    	\begin{proof}
    	For simplicity, we denote $\mathcal{E}_S[v(\tau^\lambda(S))]$ by $J^\lambda(S)$. Recalling that $\{v(\tau),\tau\in\mathcal{S}_0\}$ is an $\mathcal{E}$-supermartingale system, we have $J^\lambda(S)\leq v(S)$. It remains to prove the reverse inequality.
    	
    	We first claim that $\{J^\lambda(\tau),\tau\in\mathcal{S}_0\}$ is an $\mathcal{E}$-supermartingale system. Indeed, let $S,S'\in\mathcal{S}_0$ be such that $S\leq S'$. Noting that $\{v(\tau),\tau\in\mathcal{S}_0\}$ is an $\mathcal{E}$-supermartingale system, it is easy to check that $S\leq \tau^\lambda(S)\leq \tau^\lambda(S')$ and
    	\begin{displaymath}
    	\mathcal{E}_S[J^\lambda(S')]=\mathcal{E}_S[v(\tau^\lambda(S'))]=\mathcal{E}_S[\mathcal{E}_{\tau^\lambda(S)}[v(\tau^\lambda(S'))]]\leq \mathcal{E}_S[v(\tau^\lambda(S))]=J^\lambda(S).
    	\end{displaymath}
    	Hence, the claim holds.
    	
    	We then show that for any $S\in\mathcal{S}_0$ and $\lambda\in(0,1)$, we have $L^\lambda(S)\geq X(S)$, where $L^\lambda(S)=\lambda v(S)+(1-\lambda)J^\lambda(S)$. Indeed, by simple calculation, we obtain that
    	\begin{align*}
    	L^\lambda(S)=&\lambda v(S)+(1-\lambda)J^\lambda(S)=\lambda v(S)+(1-\lambda)J^\lambda(S)I_{\{\tau^\lambda(S)=S\}}+(1-\lambda)J^\lambda(S)I_{\{\tau^\lambda(S)>S\}}\\
    	=&\lambda v(S)+(1-\lambda)\mathcal{E}_S[v(\tau^\lambda(S))]I_{\{\tau^\lambda(S)=S\}}+(1-\lambda)J^\lambda(S)I_{\{\tau^\lambda(S)>S\}}\\
    	=&\lambda v(S)+(1-\lambda)v(S)I_{\{\tau^\lambda(S)=S\}}+(1-\lambda)J^\lambda(S)I_{\{\tau^\lambda(S)>S\}}\\
    	\geq & v(S)I_{\{\tau^\lambda(S)=S\}}+\lambda v(S)I_{\{\tau^\lambda(S)>S\}}\geq  X(S)I_{\{\tau^\lambda(S)=S\}}+X(S)I_{\{\tau^\lambda(S)>S\}}=X(S),		
    	\end{align*}
    	where in the first inequality we used that $J^\lambda(S)\geq 0$ and the last inequality follows from $v(S)\geq X(S)$ and the definition of $\tau^\lambda(S)$.
    	
    	Since the $\mathbb{F}$-expectation $(\mathcal{E},\textrm{Dom}(\mathcal{E}))$ satisfies (H6) and (H7), it is easy to check that $\{L^\lambda(\tau),\tau\in\mathcal{S}_0\}$ is an $\mathcal{E}$-supermartingale system. By Proposition \ref{P1}, we have $L^\lambda(S)\geq v(S)$, which, together with $v(S)<\infty$ obtained in Remark \ref{R1} implies that $J^\lambda(S)\geq v(S)$. The above analysis completes the proof.
    \end{proof}
	
	\begin{proof}[Proof of Lemma \ref{L1}]
		Fix $S\in\mathcal{S}_0$ and $\lambda\in(0,1)$. For any $\tau^i\in\mathcal{S}_S$, such that $\lambda v(\tau^i)\leq X(\tau^i)$, $i=1,2$, it is easy to check that the stopping time $\tau$ defined by $\tau=\tau^1\wedge \tau^2$ preserves the same property as $\tau^i$. Hence, there exists a sequence of stopping times $\{\tau_n\}_{n\in\mathbb{N}}\subset \mathcal{S}_S$ with $\lambda v(\tau_n)\leq X(\tau_n)$, such that $\tau_n\downarrow \tau^\lambda(S)$. By the monotonicity and positive homogeneity, we have  $\lambda\mathcal{E}[v(\tau_n)]\leq \mathcal{E}[X(\tau_n)]$ for any $n\in\mathbb{N}$. Letting $n$ go to infinity and applying the RC$\mathcal{E}$ property of $v$ and $X$ yield that
		\begin{equation}\label{1.10}
		\lambda \mathcal{E}[v(\tau^\lambda(S))]\leq \mathcal{E}[X(\tau^\lambda(S))].
		\end{equation}
		

		Now combining Equation \eqref{1.10} and \eqref{1.11}, we obtain that
		\begin{displaymath}
		\lambda \mathcal{E}[v(S)]=\lambda \mathcal{E}[v(\tau^\lambda(S))]\leq \mathcal{E}[X(\tau^\lambda(S))].
		\end{displaymath}
		The proof is complete.
	\end{proof}
	

	\begin{theorem}\label{T1}
		Under the same assumptions as Lemma \ref{L1}, for each $S\in\mathcal{S}_0$, there exists an optimal stopping time for $v(S)$ defined by \eqref{1.1}. Furthermore, the following stopping time 
		\begin{equation}\label{1.5}
		\tau^*(S)=\essinf\{\tau\in\mathcal{S}_S: v(\tau)=X(\tau) \textrm{ a.s.}\}
		\end{equation}	
		is the minimal optimal stopping time for $v(S)$.
	\end{theorem}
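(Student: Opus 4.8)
The plan is to exploit the $(1-\lambda)$-optimal family $\{\tau^\lambda(S)\}_{\lambda\in(0,1)}$ from \eqref{1.6} and let $\lambda\uparrow 1$. First I would record the monotonicity of this family in $\lambda$: if $\lambda_1<\lambda_2$ and $\lambda_2 v(\tau)\leq X(\tau)$, then a fortiori $\lambda_1 v(\tau)\leq X(\tau)$ since $v(\tau)\geq 0$; hence the defining set for $\lambda_2$ is contained in that for $\lambda_1$, and taking essential infima gives $\tau^{\lambda_1}(S)\leq\tau^{\lambda_2}(S)$. Consequently $\lambda\mapsto\tau^\lambda(S)$ is nondecreasing and bounded by $T$, so the a.s.\ increasing limit $\hat\tau(S):=\esssup_{\lambda\in(0,1)}\tau^\lambda(S)$ exists and, being an increasing limit of stopping times, belongs to $\mathcal{S}_S$.

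Next I would show $\hat\tau(S)$ is $S$-optimal by passing to the limit in Lemma \ref{L1}. Choosing $\lambda_n\uparrow 1$, we have $\tau^{\lambda_n}(S)\uparrow\hat\tau(S)$, so the LC$\mathcal{E}$ property of the admissible family (part of C$\mathcal{E}$) gives $\mathcal{E}[X(\tau^{\lambda_n}(S))]\to\mathcal{E}[X(\hat\tau(S))]$. Letting $n\to\infty$ in the estimate $\lambda_n\mathcal{E}[v(S)]\leq\mathcal{E}[X(\tau^{\lambda_n}(S))]$ of \eqref{1.8} yields $\mathcal{E}[v(S)]\leq\mathcal{E}[X(\hat\tau(S))]$. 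On the other hand, Proposition \ref{P1}(iii) gives $\mathcal{E}[X(\hat\tau(S))]\leq\sup_{\tau\in\mathcal{S}_S}\mathcal{E}[X(\tau)]=\mathcal{E}[v(S)]$, so in fact $\mathcal{E}[v(S)]=\mathcal{E}[X(\hat\tau(S))]$; since this supremum is finite, $\mathcal{E}[X(\hat\tau(S))]<\infty$, and the implication (c)$\Rightarrow$(a) of Proposition \ref{P2} shows $\hat\tau(S)$ is optimal. This already proves existence.

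Finally I would identify $\hat\tau(S)$ with the essential infimum $\tau^*(S)$ of \eqref{1.5} and deduce minimality. On one side, every $\tau\in\mathcal{S}_S$ with $v(\tau)=X(\tau)$ satisfies $\lambda v(\tau)\leq v(\tau)=X(\tau)$ for all $\lambda\in(0,1)$, hence lies in each defining set of $\tau^\lambda(S)$; thus $\tau^\lambda(S)\leq\tau$ for all $\lambda$, so $\hat\tau(S)\leq\tau$, and taking the essential infimum over all such $\tau$ gives $\hat\tau(S)\leq\tau^*(S)$. On the other side, by Proposition \ref{P2} any $S$-optimal stopping time $\sigma$ satisfies $v(\sigma)=X(\sigma)$, so $\sigma$ belongs to the set in \eqref{1.5} and therefore $\tau^*(S)\leq\sigma$; applied to the optimal $\hat\tau(S)$ this gives $\tau^*(S)\leq\hat\tau(S)$. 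Combining the two inequalities yields $\tau^*(S)=\hat\tau(S)$, which is optimal, and the relation $\tau^*(S)\leq\sigma$ for every optimal $\sigma$ shows it is the minimal optimal stopping time.

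I expect the delicate step to be the passage to the limit in the second paragraph: it hinges on the monotone convergence $\tau^{\lambda_n}(S)\uparrow\hat\tau(S)$ together with the left-continuity LC$\mathcal{E}$ of the reward family, which is precisely where the full C$\mathcal{E}$ hypothesis on $\{X(\tau),\tau\in\mathcal{S}_0\}$ is needed. The supporting measurability facts---that an essential infimum of a family of stopping times is again a stopping time, and that an increasing limit of stopping times is a stopping time---are standard under the usual conditions on $\mathbb{F}$.
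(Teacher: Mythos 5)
Your proposal is correct and follows essentially the same route as the paper's own proof: take the increasing limit $\hat\tau(S)=\lim_{\lambda\uparrow1}\tau^\lambda(S)$, pass to the limit in \eqref{1.8} using the LC$\mathcal{E}$ property together with \eqref{1.3} and Proposition \ref{P2} to get optimality, and then sandwich $\tau^*(S)$ between $\hat\tau(S)$ and the essential infimum of the optimal stopping times. The only cosmetic difference is that the paper justifies that $\tau^*(S)$ is a stopping time by exhibiting a decreasing sequence in $\mathbb{T}_S$ (via stability under pairwise minimum), whereas you invoke the standard fact directly; both are fine.
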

	
	\begin{proof}
		Observe that the stopping time $\tau^\lambda(S)$ defined by \eqref{1.6} is nondecreasing in $\lambda$ for any $S\in\mathcal{S}$. Define
		\begin{equation}\label{1.9}
		\hat{\tau}(S):=\lim_{\lambda\uparrow 1} \tau^\lambda(S).
		\end{equation}
		We claim that $\hat{\tau}(S)$ is optimal for the value function $v(S)$. In fact, letting $\lambda$ converge monotonically up to $1$ in Equation \eqref{1.8}, by the LC$\mathcal{E}$ property of $X$, we have $\mathcal{E}[v(S)]\leq \mathcal{E}[X(\hat{\tau}(S))]$. Recalling Equation \eqref{1.3}, we obtain the reverse inequality. Therefore, $\mathcal{E}[v(S)]= \mathcal{E}[X(\hat{\tau}(S))]$. Applying Proposition \ref{P2}, we get the desired result.
		
		Now we prove that $\tau^*(S)$ defined by \eqref{1.5} is the minimal optimal stopping time for $v(S)$. For simplicity, set $\mathbb{T}_S=\{\tau\in\mathcal{S}_S: v(\tau)=X(\tau) \textrm{ a.s.}\}$. By a similar analysis as the proof of Lemma \ref{L1}, there exists a sequence of stopping times $\{\tau_n\}_{n\in\mathbb{N}}\subset \mathbb{T}_S$ such that $\tau_n\downarrow \tau^*(S)$, which implies that $\tau^*(S)$ is a stopping time.
		
		Let $\tau$ be an optimal stopping time for $v(S)$. Applying Proposition \ref{P2}, we have $v(\tau)=X(\tau)$. Therefore, we derive that
		\begin{displaymath}
		\tau^*(S)\leq \essinf\{\tau\in\mathcal{S}_S: \tau \textrm{ is optimal for }v(S)\}.
		\end{displaymath}
		On the other hand, by the definition of $\tau^\lambda(S)$, it is easy to check that $\tau^\lambda(S)\leq \tau^*(S)$. It follows that
		\begin{displaymath}
		\tau^*(S)\geq \lim_{\lambda\uparrow 1}\tau^\lambda(S)=\hat{\tau}(S).
		\end{displaymath}
		Noting that $\hat{\tau}(S)$ is optimal for $v(S)$, we deduce that
		\begin{displaymath}
		\tau^*(S)\geq \essinf\{\tau\in\mathcal{S}_S: \tau \textrm{ is optimal for }v(S)\}.
		\end{displaymath}
		Hence, the above inequality turns to be an equality. The proof is complete.
	\end{proof}
	
	\begin{remark}
		Compared with the usual case that the optimal stopping time is defined trajectorially, our optimal stopping time is interpreted as the essential infimum, which makes it possible to relax the condition on the regularity of the reward family. For example, in \cite{CR}, the reward $\{X_t\}_{t\in[0,T]}$ is assumed to be RCLL and LC$\mathcal{E}$. The price for the weak condition of regularity is that the $\mathbb{F}$-expectation $(\mathcal{E},\textrm{Dom}(\mathcal{E}))$ should be positive homogenous and sub-additive. These two assumptions on the $\mathbb{F}$-expectation are mainly used to prove the existence of the optimal stopping time. For the properties which do not depend on the existence of the optimal stopping time, we may drop the positive homogeneity and sub-additivity on the $\mathbb{F}$-expectation.
	\end{remark}
	
	With the help of the existence of the optimal stopping time, we may establish the LC$\mathcal{E}$ property of the value function when the reward family is LC$\mathcal{E}$.
	\begin{proposition}\label{P4}
		Under the same assumption with Theorem \ref{T1}, the value function $\{v(\tau),\tau\in\mathcal{S}_0\}$ is LC$\mathcal{E}$.
	\end{proposition}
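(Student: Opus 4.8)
The plan is to fix $\tau\in\mathcal{S}_0$ together with a sequence $\{\tau_n\}_{n\in\mathbb{N}}\subset\mathcal{S}_0$ satisfying $\tau_n\uparrow\tau$ a.s., and to show $\lim_{n\to\infty}\mathcal{E}[v(\tau_n)]=\mathcal{E}[v(\tau)]$. One inequality is essentially free: since $\{v(\tau),\tau\in\mathcal{S}_0\}$ is an $\mathcal{E}$-supermartingale system by Proposition \ref{P1} and $\tau_n\leq\tau_{n+1}\leq\tau$, the real sequence $\mathcal{E}[v(\tau_n)]$ is nonincreasing and bounded below by $\mathcal{E}[v(\tau)]$. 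Hence $\lim_{n}\mathcal{E}[v(\tau_n)]$ exists and is at least $\mathcal{E}[v(\tau)]$, and the whole problem reduces to establishing the reverse inequality $\lim_n\mathcal{E}[v(\tau_n)]\leq\mathcal{E}[v(\tau)]$.

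For the reverse inequality I would exploit the optimal stopping times provided by Theorem \ref{T1}. For each $n$ set $\theta_n=\tau^*(\tau_n)$, the minimal optimal stopping time for $v(\tau_n)$; the optimality characterization in Proposition \ref{P2}(c) then gives $\mathcal{E}[v(\tau_n)]=\mathcal{E}[X(\theta_n)]$. The key structural observation is that $\theta_n$ is nondecreasing in $n$: as $\tau_n\leq\tau_{n+1}$, the candidate set $\mathbb{T}_{\tau_{n+1}}=\{\theta\in\mathcal{S}_{\tau_{n+1}}:v(\theta)=X(\theta)\}$ shrinks with $n$, so its essential infimum $\theta_n$ increases. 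Consequently $\theta_n\uparrow\theta^*:=\sup_n\theta_n$, which is again a stopping time, and since $\theta_n\geq\tau_n\uparrow\tau$ we obtain $\theta^*\geq\tau$, i.e. $\theta^*\in\mathcal{S}_\tau$.

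Now I would invoke the LC$\mathcal{E}$ hypothesis on the reward family. Because $\theta_n\uparrow\theta^*$ a.s., Definition \ref{d} applied to $\{X(\tau),\tau\in\mathcal{S}_0\}$ yields $\mathcal{E}[X(\theta_n)]\to\mathcal{E}[X(\theta^*)]$, whence $\lim_n\mathcal{E}[v(\tau_n)]=\mathcal{E}[X(\theta^*)]$. Since $\theta^*\in\mathcal{S}_\tau$, formula \eqref{1.3} gives $\mathcal{E}[X(\theta^*)]\leq\sup_{\rho\in\mathcal{S}_\tau}\mathcal{E}[X(\rho)]=\mathcal{E}[v(\tau)]$, which is exactly the missing inequality. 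Combining this with the supermartingale bound of the first paragraph forces $\lim_n\mathcal{E}[v(\tau_n)]=\mathcal{E}[v(\tau)]$, so $\{v(\tau),\tau\in\mathcal{S}_0\}$ is LC$\mathcal{E}$.

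I expect the main obstacle to be handling the monotonicity $\theta_n\leq\theta_{n+1}$ and the bound $\theta^*\geq\tau$ at the level of the stopping times themselves, rather than merely at the level of their $\mathcal{E}$-values, since the entire argument hinges on being able to feed the almost-sure convergence $\theta_n\uparrow\theta^*$ into the LC$\mathcal{E}$ property of $X$. Once that monotone structure of the optimal stopping times is in place, the remaining steps are routine applications of the $\mathcal{E}$-supermartingale property and of the representation \eqref{1.3}.
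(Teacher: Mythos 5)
Your argument is correct and follows essentially the same route as the paper's own proof: both use the supermartingale inequality for one direction, then take the minimal optimal stopping times $\tau^*(\tau_n)$ from Theorem \ref{T1}, observe they increase to a limit $\bar\tau\in\mathcal{S}_\tau$, and feed that monotone convergence into the LC$\mathcal{E}$ property of $X$ together with \eqref{1.3}. The only cosmetic difference is that the paper bounds the limit above by $\tau^*(\tau)$ while you simply note it dominates $\tau$, which is all that is needed.
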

	
	\begin{proof}
		Let $\{S_n\}_{n\in\mathbb{N}}\subset \mathcal{S}_0$ be such that $S_n\uparrow S$, where $S$ is a stopping time. Since $\{v(\tau),\tau\in\mathcal{S}_0\}$ is an $\mathcal{E}$-supermartingale system, we have
		\begin{equation}\label{1.12}
		\mathcal{E}[v(S_n)]\geq \mathcal{E}[v(S)].
		\end{equation}
		Set
		\begin{displaymath}
		\tau^*(S_n)=\essinf\{\tau\in\mathcal{S}_{S_n}: v(\tau)=X(\tau)\}.
		\end{displaymath}
		Applying Theorem \ref{T1} implies that $v(S_n)=\mathcal{E}_{S_n}[X(\tau^*(S_n))]$. It is easy to check that $\tau^*(S_n)$ is nondecreasing in $n$. We denote the limit of $\tau^*(S_n)$ by $\bar{\tau}$. Since for any $n\in\mathbb{N}$, we have $S_n\leq \tau^*(S_n)\leq \tau^*(S)$. It follows that $S\leq \bar{\tau}\leq \tau^*(S)$. Noting that $X$ is LC$\mathcal{E}$, we have
		\begin{displaymath}
		\mathcal{E}[v(S)]=\sup_{\tau\in\mathcal{S}_S}\mathcal{E}[X(\tau)]\geq \mathcal{E}[X(\bar{\tau})]=\lim_{n\rightarrow\infty}\mathcal{E}[X(\tau^*(S_n))]=\lim_{n\rightarrow\infty}\mathcal{E}[v(S_n)],
		\end{displaymath}
		which, together with \eqref{1.12} implies the desired result.
	\end{proof}

\begin{remark}\label{r4}
	(i) In the proof of Proposition \ref{P4}, we have $\mathcal{E}[v(S)]=\mathcal{E}[X(\bar{\tau})]$. By Proposition \ref{P2}, we conclude that $v(\bar{\tau})=X(\bar{\tau})$, which implies that $\bar{\tau}$ is no less than $\tau^*(S)$ defined in Theorem \ref{T1}. On the other hand, it is easy to check that $\tau^*(S_n)\leq \tau^*(S)$ for any $n\in\mathbb{N}$. Letting $n$ go to infinity yields that $\bar{\tau}\leq \tau^*(S)$. The above analysis shows that
	\begin{displaymath}
		\tau^*(S)=\lim_{n\rightarrow\infty}\tau^*(S_n),
	\end{displaymath}
	that is, the mapping $S\mapsto \tau^*(S)$ is left-continuous along stopping times.
	
	(ii) Suppose that the family $\{X(\tau),\tau\in\mathcal{S}_0\}$ in Proposition \ref{P4} is only left-continuous in $\mathcal{E}$-expectation along stopping times greater than $\sigma$ (i.e., if $\{\tau_n\}_{n\in\mathbb{N}}\subset \mathcal{S}_\sigma$ and $\tau_n\uparrow\tau$, then we have $\mathcal{E}[X(\tau)]=\lim_{n\rightarrow\infty}\mathcal{E}[X(\tau_n)]$). If for any $S\in\mathcal{S}_0$, the optimal stopping time $\tau^*(S)$ defined by \eqref{1.5} is no less than $\sigma$, the value function $\{v(S),S\in\mathcal{S}_0\}$ is still LC$\mathcal{E}$.
\end{remark}

In the following of this section, suppose that the $\mathbb{F}$-expectation $(\mathcal{E},\textrm{Dom}(\mathcal{E}))$ satisfies all the assumptions (H0)-(H7). Now given an admissible family $\{X(\tau),\tau\in\mathcal{S}_0\}$ with $\sup_{\tau\in\mathcal{S}_0}\mathcal{E}[X(\tau)]<\infty$, for each fixed $\theta\in\mathcal{S}_0$, we define the following random variable:
\begin{displaymath}
	X'(\tau)=X(\tau)I_{\{\tau\geq \theta\}}-I_{\{\tau<\theta\}}.
\end{displaymath}
Then, for each $\tau\in\mathcal{S}_0$, $X'(\tau)$ is $\mathcal{F}_\tau$-measurable, bounded from below and \[\sup_{\tau\in\mathcal{S}_0}\mathcal{E}[|X'(\tau)|]<\infty.\] Besides, $X'(\tau)=X'(\sigma)$ on the set $\{\tau=\sigma\}$. Let us define
\begin{displaymath}
	v'(S)=\esssup_{\tau\in\mathcal{S}_S}\mathcal{E}_S[X'(\tau)].
\end{displaymath}
Then all the results in Proposition \ref{P1}, Proposition \ref{P2} and Remark \ref{R1} still hold if we replace $X$ and $v$ by $X'$ and $v'$ respectively. Furthermore, if the original admissible family $\{X(\tau),\tau\in\mathcal{S}_0\}$ is RC$\mathcal{E}$, by Remark \ref{r1}, the family $\{v'(S),S\in\mathcal{S}_0\}$ is right-continuous in $\mathcal{E}$-expectation along stopping times greater than $\theta$. The following theorem indicates that there exists an optimal stopping time for $v'(S)$ and the family $\{v'(\tau),\tau\in\mathcal{S}_0\}$ is LC$\mathcal{E}$ (not only left-continuous in $\mathcal{E}$-expectation along stopping times greater than $\theta$) provided that the family $\{X(\tau),\tau\in\mathcal{S}_0\}$ is C$\mathcal{E}$.

\begin{theorem}\label{t1}
	Let the $\mathbb{F}$-expectation $(\mathcal{E},\textrm{Dom}(\mathcal{E}))$ satisfy all the assumptions (H0)-(H7) and let $\{X(\tau),\tau\in\mathcal{S}_0\}$ be a C$\mathcal{E}$ admissible family with $\sup_{\tau\in\mathcal{S}_0}\mathcal{E}[X(\tau)]<\infty$. For each $S\in\mathcal{S}_0$, there exists an optimal stopping time for $v'(S)$. Furthermore, the following stopping time
	\begin{displaymath}
		\tau'(S)=\essinf\{\tau\in\mathcal{S}_S: v'(\tau)=X'(\tau) \textrm{ a.s.}\}
	\end{displaymath}
	is the minimal optimal stopping time for $v'(S)$ and the value function $\{v'(S),S\in\mathcal{S}_0\}$ is LC$\mathcal{E}$.
\end{theorem}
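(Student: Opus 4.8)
The plan is to mimic the proof of Theorem~\ref{T1} and Proposition~\ref{P4}, the only genuinely new difficulty being that $X'$ is \emph{not} C$\mathcal{E}$: it has a downward jump at $\theta$, since $X'(\tau)=-1$ for $\tau<\theta$ while $X'(\tau)=X(\tau)\geq 0$ for $\tau\geq\theta$. The resolution is to observe that every stopping time that actually enters the construction lies above $\theta$, where $X'$ coincides with the C$\mathcal{E}$ family $X$. The first step is to record that $v'(\tau)\geq 0$ for every $\tau\in\mathcal{S}_0$: indeed $\theta\vee\tau\in\mathcal{S}_\tau$ and $X'(\theta\vee\tau)=X(\theta\vee\tau)\geq 0$, so by monotonicity $v'(\tau)\geq\mathcal{E}_\tau[X(\theta\vee\tau)]\geq 0$. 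Consequently, on $\{\tau<\theta\}$ one has $X'(\tau)=-1<0\leq\lambda v'(\tau)$, so that any $\tau$ satisfying $\lambda v'(\tau)\leq X'(\tau)$ a.s. must obey $\tau\geq\theta$ a.s.; likewise, any optimal stopping time, which satisfies $v'(\tau)=X'(\tau)$ by the analogue of Proposition~\ref{P2}, must be $\geq\theta$.

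Next I would set up the approximation exactly as before. For $\lambda\in(0,1)$ define $\tau'^\lambda(S)=\essinf\{\tau\in\mathcal{S}_S:\lambda v'(\tau)\leq X'(\tau)\ \textrm{a.s.}\}$; by the previous step $\tau'^\lambda(S)\geq\theta\vee S$. Since all the results of Proposition~\ref{P1} and~\ref{P2} hold for the bounded-below family $X'$, the family $\{v'(\tau),\tau\in\mathcal{S}_0\}$ is an $\mathcal{E}$-supermartingale system dominating $\{X'(\tau)\}$ with $v'(S)<\infty$ and $\mathcal{E}[v'(S)]=\sup_{\tau\in\mathcal{S}_S}\mathcal{E}[X'(\tau)]$. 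I would then prove verbatim the analogues of Lemma~\ref{L2} and Lemma~\ref{L1}: the identity $v'(S)=\mathcal{E}_S[v'(\tau'^\lambda(S))]$ follows from the supermartingale property of $J'^\lambda$ together with the inequality $\lambda v'(S)+(1-\lambda)J'^\lambda(S)\geq X'(S)$ (using $v'\geq 0$, the dominance $v'(S)\geq X'(S)$, and the definition of $\tau'^\lambda(S)$), and then $\lambda\mathcal{E}[v'(S)]\leq\mathcal{E}[X'(\tau'^\lambda(S))]$ follows by selecting a sequence $\tau_n\downarrow\tau'^\lambda(S)$ inside the defining set and passing to the limit. This passage is legitimate because all these $\tau_n$ and $\tau'^\lambda(S)$ lie in $\mathcal{S}_\theta$, where $X'=X$ and $v'$ are RC$\mathcal{E}$ (the latter by the discussion preceding the theorem, via Remark~\ref{r1}).

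For existence, set $\hat\tau'(S)=\lim_{\lambda\uparrow 1}\tau'^\lambda(S)$, which exists since $\tau'^\lambda(S)$ is nondecreasing in $\lambda$ and $\geq\theta$. Letting $\lambda\uparrow 1$ in $\lambda\mathcal{E}[v'(S)]\leq\mathcal{E}[X'(\tau'^\lambda(S))]$ and using that $\tau'^\lambda(S)\uparrow\hat\tau'(S)$ within $\mathcal{S}_\theta$, where $X'=X$ is LC$\mathcal{E}$, gives $\mathcal{E}[v'(S)]\leq\mathcal{E}[X'(\hat\tau'(S))]$; the reverse inequality is the analogue of \eqref{1.3}, so $\mathcal{E}[v'(S)]=\mathcal{E}[X'(\hat\tau'(S))]$ and $\hat\tau'(S)$ is optimal by Proposition~\ref{P2}. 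That $\tau'(S)$ is the minimal optimal stopping time is obtained exactly as in Theorem~\ref{T1}: the set $\{\tau\in\mathcal{S}_S:v'(\tau)=X'(\tau)\}$ is stable under $\wedge$ by the local property, so $\tau'(S)$ is a decreasing limit of such stopping times; every optimal stopping time lies in this set and hence dominates $\tau'(S)$, while $\tau'^\lambda(S)\leq\tau'(S)$ gives $\hat\tau'(S)\leq\tau'(S)$. Combining, $\tau'(S)=\hat\tau'(S)=\essinf\{\tau\in\mathcal{S}_S:\tau\ \textrm{is optimal for}\ v'(S)\}$, which is therefore optimal and minimal.

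Finally, for the LC$\mathcal{E}$ property of $v'$ I would invoke Remark~\ref{r4}(ii) with $\sigma=\theta$, which is precisely the situation it was designed for: $X'$ is LC$\mathcal{E}$ along stopping times greater than $\theta$ (there $X'=X$), and the minimal optimal stopping time $\tau'(S)$ satisfies $\tau'(S)\geq\theta$ for every $S$ by the first step. Hence $\{v'(S),S\in\mathcal{S}_0\}$ is LC$\mathcal{E}$ on all of $\mathcal{S}_0$, not merely above $\theta$. The main obstacle throughout is exactly this left-discontinuity of $X'$ at $\theta$, which would obstruct a direct appeal to Theorem~\ref{T1} and Proposition~\ref{P4}; the device that makes everything go through is the uniform lower bound $v'\geq 0$, forcing every relevant stopping time above $\theta$, together with Remark~\ref{r4}(ii), which upgrades left-continuity above $\theta$ into genuine LC$\mathcal{E}$.
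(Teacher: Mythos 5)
Your proposal is correct and takes essentially the same approach as the paper: the paper's own proof consists precisely of the observation that $v'(S)\geq \mathcal{E}_S[X(T)]\geq 0$ forces every stopping time in the defining set of $\tau^{\prime,\lambda}(S)$ (and every optimal stopping time) to lie above $\theta$, where $X'=X$, followed by the instruction to modify the proofs of Lemma \ref{L1}, Lemma \ref{L2}, Theorem \ref{T1} and Proposition \ref{P4} accordingly. You have simply carried out that modification in detail, including the correct appeal to Remark \ref{r4}(ii) with $\sigma=\theta$ for the LC$\mathcal{E}$ property.
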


\begin{proof}
	For any $\lambda\in(0,1)$, we define a random variable $\tau^{\prime,\lambda}(S)$ by
	\begin{displaymath}
		\tau^{\prime,\lambda}(S)=\essinf\{\tau\in \mathcal{S}_S: \lambda v'(\tau)\leq X'(\tau), \textrm{ a.s.}\}.
	\end{displaymath}
	Since for any $S\in\mathcal{S}_0$, we have $v'(S)\geq \mathcal{E}_S[X(T)]\geq 0$, which implies that $\tau\geq \theta$, where $\tau\in\{\tau\in \mathcal{S}_S: \lambda v'(\tau)\leq X'(\tau), \textrm{ a.s.}\}$. Therefore, we obtain that  $\tau^{\prime,\lambda}(S)\geq \theta$. It follows that for any fixed $S\in\mathcal{S}_0$ and any $\tau\geq \tau^{\prime,\lambda}(S)$, we have $X'(\tau)=X(\tau)$. Modifying the   proofs of Lemma \ref{L1}, Lemma \ref{L2}, Theorem \ref{T1} and Proposition \ref{P4}, we finally get the desired result.
\end{proof}
	
\section{The optimal double stopping problem under nonlinear expectation}	

In this section, we consider the optimal double stopping problem under the $\mathbb{F}$-expectation satisfying Assumptions (H0)-(H5). We first introduce the definition of appropriate reward family.
	\begin{definition}
		The family $\{X(\tau,\sigma),\tau,\sigma\in\mathcal{S}_0\}$ is said to be biadmissible if it satisfies the following properties:
		\begin{description}
			\item[(1)] for all $\tau,\sigma\in\mathcal{S}_0$, $X(\tau,\sigma)\in \textrm{Dom}^+_{\tau\vee\sigma}(\mathcal{E})$;
			\item[(2)] for all $\tau,\sigma,\tau',\sigma'\in\mathcal{S}_0$, $X(\tau,\sigma)=X(\tau',\sigma')$ on the set $\{\tau=\tau'\}\cap \{\sigma=\sigma'\}$.
		\end{description}
	\end{definition}
	
	
	Now, we are given a biadmissible reward family $\{X(\tau,\sigma),\tau,\sigma\in\mathcal{S}_0\}$ with $\sup_{\tau,\sigma\in\mathcal{S}_0}\mathcal{E}[X(\tau,\sigma)]<\infty$. Then, the corresponding value function is defined as follows:
	\begin{equation}\label{2.1}
	v(S)=\esssup_{\tau_1,\tau_2\in\mathcal{S}_S}\mathcal{E}_S[X(\tau_1,\tau_2)].
	\end{equation}
	
	Similar with the single optimal stopping problem, we have the following properites.
	
	\begin{proposition}\label{P5}
		If $\{X(\tau,\sigma),\tau,\sigma\in\mathcal{S}_0\}$ is a biadmissible family with $\sup_{\tau_1,\tau_2\in\mathcal{S}_0}\mathcal{E}[X(\tau_1,\tau_2)]<\infty$, then the value function $\{v(S),S\in\mathcal{S}_0\}$ defined by \eqref{2.1} satisfies the following properties:
		\begin{description}
			\item[(i)] for each $S\in\mathcal{S}_0$, there exists a sequence of pairs of stopping times $\{(\tau_1^n,\tau_2^n)\}_{n\in\mathbb{N}}\subset \mathcal{S}_S\times\mathcal{S}_S$ such that $\mathcal{E}_S[X(\tau_1^n,\tau_2^n)]$ converges monotonically up to $v(S)$;
			\item[(ii)] $\{v(S),S\in\mathcal{S}_0\}$ is an admissible family;
			\item[(iii)] $\{v(S),S\in\mathcal{S}_0\}$ is an $\mathcal{E}$-supermartingale system;
			\item[(iv)] for each $S\in\mathcal{S}_0$, we have
			\begin{displaymath}
			\mathcal{E}[v(S)]=\sup_{\tau,\sigma\in\mathcal{S}_S}\mathcal{E}[X(\tau,\sigma)].
			\end{displaymath}
		\end{description}
	\end{proposition}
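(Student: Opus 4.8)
The plan is to treat this as the two-parameter analogue of Proposition \ref{P1} and to follow that proof almost verbatim, the only genuinely new point being that one now pastes \emph{both} coordinates of a pair of stopping times simultaneously. Throughout I would write $Y(S,\tau_1,\tau_2)=\mathcal{E}_S[X(\tau_1,\tau_2)]$; since $X(\tau_1,\tau_2)\in\textrm{Dom}^+_{\tau_1\vee\tau_2}(\mathcal{E})$ and $\tau_1\vee\tau_2\geq S$ whenever $\tau_1,\tau_2\in\mathcal{S}_S$, each $Y(S,\tau_1,\tau_2)$ is a well-defined, nonnegative, $\mathcal{F}_S$-measurable element of $\textrm{Dom}^+_S(\mathcal{E})$.

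For (i), I would first show the family $\{Y(S,\tau_1,\tau_2):\tau_1,\tau_2\in\mathcal{S}_S\}$ is upward directed. Given two pairs $(\tau_1,\tau_2)$ and $(\sigma_1,\sigma_2)$ in $\mathcal{S}_S\times\mathcal{S}_S$, set $B=\{Y(S,\tau_1,\tau_2)\geq Y(S,\sigma_1,\sigma_2)\}\in\mathcal{F}_S$ and paste coordinatewise, $\theta_1=\tau_1 I_B+\sigma_1 I_{B^c}$ and $\theta_2=\tau_2 I_B+\sigma_2 I_{B^c}$. Both $\theta_1,\theta_2$ lie in $\mathcal{S}_S$, and by the local property of the $\mathbb{F}$-expectation together with biadmissibility one gets $Y(S,\theta_1,\theta_2)=Y(S,\tau_1,\tau_2)\vee Y(S,\sigma_1,\sigma_2)$. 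Upward directedness then yields, by the standard property of the essential supremum, a sequence $\{(\tau_1^n,\tau_2^n)\}\subset\mathcal{S}_S\times\mathcal{S}_S$ with $Y(S,\tau_1^n,\tau_2^n)$ increasing up to $v(S)$.

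For (ii), $\mathcal{F}_S$-measurability of $v(S)$ is immediate from that of each $Y(S,\tau_1,\tau_2)$, while $v(S)\in\textrm{Dom}^+(\mathcal{E})$ follows from Lemma \ref{L4.1} applied to $\Xi=\{Y(S,\tau_1,\tau_2):\tau_1,\tau_2\in\mathcal{S}_S\}$, since by time consistency $\sup_{\tau_1,\tau_2}\mathcal{E}[Y(S,\tau_1,\tau_2)]=\sup_{\tau_1,\tau_2}\mathcal{E}[X(\tau_1,\tau_2)]<\infty$; hence $v(S)\in\textrm{Dom}^+_S(\mathcal{E})$. The compatibility relation $v(\tau)=v(\sigma)$ on $A=\{\tau=\sigma\}$ is obtained exactly as in Proposition \ref{P1}(i): for $\theta_1,\theta_2\in\mathcal{S}_\tau$ replace each coordinate off $A$ by $T$, i.e. $\theta_{i,A}=\theta_i I_A+T I_{A^c}\in\mathcal{S}_\sigma$, run the same chain of equalities using the local property to get $v(\tau)\leq v(\sigma)$ on $A$, and conclude by symmetry.

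Finally, for (iii) and (iv) I would use the monotone sequence from (i) together with Fatou's lemma (Proposition \ref{P2.8}). For $\sigma\leq S$ in $\mathcal{S}_0$, the pairs $(\tau_1^n,\tau_2^n)\in\mathcal{S}_S\subset\mathcal{S}_\sigma$ give
\[
\mathcal{E}_\sigma[v(S)]\leq\liminf_{n\rightarrow\infty}\mathcal{E}_\sigma[\mathcal{E}_S[X(\tau_1^n,\tau_2^n)]]=\liminf_{n\rightarrow\infty}\mathcal{E}_\sigma[X(\tau_1^n,\tau_2^n)]\leq v(\sigma),
\]
which is (iii); taking $\sigma=0$ yields $\mathcal{E}[v(S)]\leq\sup_{\tau,\sigma\in\mathcal{S}_S}\mathcal{E}[X(\tau,\sigma)]$, and the reverse inequality is immediate from $v(S)\geq\mathcal{E}_S[X(\tau,\sigma)]$ for every $\tau,\sigma\in\mathcal{S}_S$, giving (iv). I do not anticipate a genuine obstacle: the single nontrivial point is the coordinatewise pasting in the upward-directedness step and the verification that the pasted pairs remain in $\mathcal{S}_S\times\mathcal{S}_S$; every other estimate transfers from the single-stopping case unchanged.
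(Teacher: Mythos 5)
Your proposal is correct and follows essentially the same route as the paper: coordinatewise pasting on $B=\{\mathcal{E}_S[X(\tau_1,\sigma_1)]\geq\mathcal{E}_S[X(\tau_2,\sigma_2)]\}$ for upward directedness, the replace-by-$T$-off-$A$ argument for compatibility, and the monotone sequence plus Fatou's lemma for the supermartingale property and (iv), exactly as in Proposition \ref{P1}. The only cosmetic difference is that you invoke Lemma \ref{L4.1} for $v(S)\in\textrm{Dom}^+(\mathcal{E})$ whereas the paper applies Assumption (H5) directly to the monotone sequence from (i); these are the same argument.
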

	
	\begin{proof}
		(i) It is sufficient to show that the set $\{\mathcal{E}_S[X(\tau,\sigma)],\tau,\sigma\in\mathcal{S}_S\}$ is upward directed. Indeed, for any $\tau_i,\sigma_i\in\mathcal{S}_S$, $i=1,2$, set $B=\{\mathcal{E}_S[X(\tau_1,\sigma_1)]\geq \mathcal{E}_S[X(\tau_2,\sigma_2)]\}$, $\tau=\tau_1 I_B+\tau_2I_{B^c}$ and $\sigma=\sigma_1 I_B+\sigma_2I_{B^c}$. It is easy to check that $B\in\mathcal{F}_S$, $\tau,\sigma\in\mathcal{S}_S$ and
		\begin{displaymath}
		\mathcal{E}_S[X(\tau,\sigma)]=\mathcal{E}_S[X(\tau_1,\sigma_1)]I_B+\mathcal{E}_S[X(\tau_2,\sigma_2)]I_{B^c}=\max\{\mathcal{E}_S[X(\tau_1,\sigma_1)],\mathcal{E}_S[X(\tau_2,\sigma_2)]\}.
		\end{displaymath}
		Hence, we get the desired result.
		
		(ii) The measurability and nonnegativity follow from the definition of $v(S)$. By (i), we have $v(S)=\lim_{n\rightarrow\infty}\mathcal{E}_S[X(\tau_1^n,\tau_2^n)]$. Due to the fact that
		\begin{displaymath}
			\liminf_{n\rightarrow\infty}\mathcal{E}[\mathcal{E}_S[X(\tau_1^n,\tau_2^n)]]\leq \sup_{\tau_1,\tau_2\in\mathcal{S}_0}\mathcal{E}[X(\tau_1,\tau_2)]<\infty,
		\end{displaymath}
		then Assumption (H5) implies that $v(S)\in \textrm{Dom}^+(\mathcal{E})$.  
		Given $\tau,\sigma\in \mathcal{S}_0$, we define $A=\{\tau=\sigma\}\in\mathcal{F}_\tau\cap \mathcal{F}_\sigma$. For any $\tau_1,\tau_2\in\mathcal{S}_\tau$, set $\tau_i^A=\tau_i I_A+TI_{A^c}$, $i=1,2$. Then, we have $\tau_i^A\in\mathcal{S}_\tau\cap\mathcal{S}_\sigma$, $i=1,2$. By simple calculation, we obtain that
		\begin{align*}
		\mathcal{E}_\tau[X(\tau_1,\tau_2)]I_A&=\mathcal{E}_\tau[X(\tau_1,\tau_2)I_A]=\mathcal{E}_\tau[X(\tau_1^A,\tau_2^A)I_A]=\mathcal{E}_\tau[X(\tau_1^A,\tau_2^A)]I_A\\ &=\mathcal{E}_\sigma[X(\tau_1^A,\tau_2^A)]I_A\leq v(\sigma) I_A.
		\end{align*}
		Taking essential supremum over all $\tau_1,\tau_2\in\mathcal{S}_\tau$ yields that $v(\tau)\leq v(\sigma)$ on the set $A$. By symmetry, the reverse inequality also holds. Therefore, $\{v(S),S\in\mathcal{S}_0\}$ is an admissible family.
		
		
		Properties (iii) and (iv)  can be proved similarly as the single stopping problem (see Proposition \ref{P1}).
	\end{proof}
	
	\begin{remark}\label{R2}
		(i) 
		Under the integrability condition $\sup_{\tau,\sigma\in\mathcal{S}_0}\mathcal{E}[X(\tau,\sigma)]<\infty$ and (iv) in Proposition \ref{P5}, we conclude that $v(S)<\infty$, a.s.
		
		(ii) If Assumption (H5) does not hold, we need to assume furthermore that $\eta:=\esssup_{\tau,\sigma\in\mathcal{S}_0}X(\tau,\sigma)\in\textrm{Dom}(\mathcal{E})$ in order to ensure that Proposition \ref{P5} still holds.
	\end{remark}
	
	In the following, we will show that the value function defined by \eqref{2.1} coincides with the value function of the single stopping problem corresponding to a new reward family. For this purpose,  for each $\tau\in\mathcal{S}_0$, we define 	
	\begin{equation}\label{2.2}
	u_1(\tau)=\esssup_{\tau_1\in\mathcal{S}_\tau}\mathcal{E}_\tau[X(\tau_1,\tau)], \ u_2(\tau)=\esssup_{\tau_2\in\mathcal{S}_\tau}\mathcal{E}_\tau[X(\tau,\tau_2)],
	\end{equation}
	and
	\begin{equation}\label{2.3}
	\widetilde{X}(\tau)=\max\{u_1(\tau),u_2(\tau)\}.
	\end{equation}
	The first observation is that, the family $\{\widetilde{X}(\tau),\tau\in\mathcal{S}_0\}$ is admissible.
	\begin{lemma}
		Suppose that $\{X(\tau,\sigma),\tau,\sigma\in\mathcal{S}_0\}$ is a biadmissible family with $\sup_{\tau,\sigma}\mathcal{E}[X(\tau,\sigma)]<\infty$. Then, the family defined by \eqref{2.3} is admissible and $\sup_{\tau\in\mathcal{S}_0}\mathcal{E}[\widetilde{X}(\tau)]<\infty$.
	\end{lemma}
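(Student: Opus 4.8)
The plan is to treat $u_1$ and $u_2$ as value functions of two single-stopping problems and to transfer the properties established in Proposition~\ref{P1} and Lemma~\ref{L4.1}. For fixed $\tau\in\mathcal{S}_0$, the family $\{X(\tau_1,\tau),\tau_1\in\mathcal{S}_\tau\}$ is admissible on $\mathcal{S}_\tau$: property (2) of biadmissibility with $\sigma=\sigma'=\tau$ gives the compatibility condition, and $X(\tau_1,\tau)\in\textrm{Dom}^+_{\tau_1\vee\tau}(\mathcal{E})=\textrm{Dom}^+_{\tau_1}(\mathcal{E})$ since $\tau_1\geq\tau$. Thus $u_1(\tau)$ is the value of this single-stopping problem evaluated at $\tau$, and likewise for $u_2(\tau)$. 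First I would record that $u_1(\tau),u_2(\tau)$ are $\mathcal{F}_\tau$-measurable (essential suprema of $\mathcal{F}_\tau$-measurable families) and nonnegative, so $\widetilde{X}(\tau)$ is $\mathcal{F}_\tau$-measurable and nonnegative; membership of $u_1(\tau),u_2(\tau)$ in $\textrm{Dom}^+(\mathcal{E})$ then follows from Lemma~\ref{L4.1}, since $\sup_{\tau_1}\mathcal{E}[\mathcal{E}_\tau[X(\tau_1,\tau)]]=\sup_{\tau_1}\mathcal{E}[X(\tau_1,\tau)]\leq \sup_{\tau',\sigma'}\mathcal{E}[X(\tau',\sigma')]<\infty$.

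The heart of the argument is the simultaneous proof of $\widetilde{X}(\tau)\in\textrm{Dom}^+_\tau(\mathcal{E})$ and the uniform bound $\sup_{\tau}\mathcal{E}[\widetilde{X}(\tau)]<\infty$. The obstacle is that only (H0)--(H5) are assumed here, so sub-additivity is unavailable and one cannot simply estimate $\mathcal{E}[\max\{u_1,u_2\}]$ by $\mathcal{E}[u_1]+\mathcal{E}[u_2]$. My plan is to use the upward-directedness from Proposition~\ref{P1} to choose increasing sequences $\mathcal{E}_\tau[X(\tau_1^n,\tau)]\uparrow u_1(\tau)$ and $\mathcal{E}_\tau[X(\tau,\tau_2^n)]\uparrow u_2(\tau)$, and then to realize their pointwise maximum as a genuine double-stopping value. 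Setting $B_n=\{\mathcal{E}_\tau[X(\tau_1^n,\tau)]\geq \mathcal{E}_\tau[X(\tau,\tau_2^n)]\}\in\mathcal{F}_\tau$ and pasting $\alpha_n=\tau_1^n I_{B_n}+\tau I_{B_n^c}$, $\beta_n=\tau I_{B_n}+\tau_2^n I_{B_n^c}$, biadmissibility and the local property give $\mathcal{E}_\tau[X(\alpha_n,\beta_n)]=\mathcal{E}_\tau[X(\tau_1^n,\tau)]\vee \mathcal{E}_\tau[X(\tau,\tau_2^n)]$, which increases to $\widetilde{X}(\tau)$. Since $\mathcal{E}[\mathcal{E}_\tau[X(\alpha_n,\beta_n)]]=\mathcal{E}[X(\alpha_n,\beta_n)]\leq\sup_{\tau',\sigma'}\mathcal{E}[X(\tau',\sigma')]<\infty$ by time consistency, Assumption (H5) yields $\widetilde{X}(\tau)\in\textrm{Dom}^+(\mathcal{E})$, and Fatou's Lemma (Proposition~\ref{P2.8}) gives $\mathcal{E}[\widetilde{X}(\tau)]\leq\liminf_n\mathcal{E}[X(\alpha_n,\beta_n)]\leq\sup_{\tau',\sigma'}\mathcal{E}[X(\tau',\sigma')]$, a bound independent of $\tau$.

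Finally, for the compatibility property, I would argue exactly as in the proof of Proposition~\ref{P1}(i). Fixing $\tau,\sigma\in\mathcal{S}_0$ and $A=\{\tau=\sigma\}\in\mathcal{F}_\tau\cap\mathcal{F}_\sigma$, for each $\tau_1\in\mathcal{S}_\tau$ I set $\tau_1^A=\tau_1 I_A+T I_{A^c}\in\mathcal{S}_\tau\cap\mathcal{S}_\sigma$ and use the local property together with the agreement of $\mathcal{E}_\tau$ and $\mathcal{E}_\sigma$ on $A$ to obtain $\mathcal{E}_\tau[X(\tau_1,\tau)]I_A=\mathcal{E}_\sigma[X(\tau_1^A,\sigma)]I_A\leq u_1(\sigma)I_A$; taking the essential supremum over $\tau_1\in\mathcal{S}_\tau$ gives $u_1(\tau)\leq u_1(\sigma)$ on $A$, and the reverse inequality follows by symmetry. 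The same reasoning applied to $u_2$ yields $u_2(\tau)=u_2(\sigma)$ on $A$, whence $\widetilde{X}(\tau)=\widetilde{X}(\sigma)$ on $A$. I expect the pasting step in the second paragraph---recognizing $\mathcal{E}_\tau[X(\tau_1^n,\tau)]\vee\mathcal{E}_\tau[X(\tau,\tau_2^n)]$ as a single double-stopping value so that the double-stopping integrability bound applies without sub-additivity---to be the only genuinely non-routine point; everything else is a direct transcription of the single-stopping arguments.
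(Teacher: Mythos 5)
Your proof is correct, and the compatibility argument (pasting $\tau_1^A=\tau_1 I_A+TI_{A^c}$ on $A=\{\tau=\sigma\}$) is exactly the paper's. Where you diverge is in establishing $\widetilde{X}(\tau)\in\textrm{Dom}^+(\mathcal{E})$ and the uniform bound $\sup_\tau\mathcal{E}[\widetilde{X}(\tau)]<\infty$: the paper disposes of both with the one-line observation $0\leq\widetilde{X}(\tau)\leq v(\tau)$, where $v$ is the double-stopping value function of \eqref{2.1}, and then cites Proposition \ref{P5} for $v(\tau)\in\textrm{Dom}^+(\mathcal{E})$ (so (D3) gives domain membership of the max) and for $\sup_\tau\mathcal{E}[v(\tau)]\leq\sup_{\tau,\sigma}\mathcal{E}[X(\tau,\sigma)]$. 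You instead build the dominating object by hand: pasting $(\alpha_n,\beta_n)$ so that $\mathcal{E}_\tau[X(\alpha_n,\beta_n)]=\mathcal{E}_\tau[X(\tau_1^n,\tau)]\vee\mathcal{E}_\tau[X(\tau,\tau_2^n)]\uparrow\widetilde{X}(\tau)$, then invoking (H5) and Fatou. This is a correct, self-contained re-derivation of precisely the ingredients that Proposition \ref{P5} already packages (upward directedness, (H5), time consistency), so it costs more words but does not require having proved anything about $v$ first; the paper's route is shorter because the double-stopping value function and its bound are already on the table. Note also that you identified, and correctly handled, the one genuine subtlety that the paper's dominance argument also quietly sidesteps: without sub-additivity one cannot bound $\mathcal{E}[\max\{u_1,u_2\}]$ by $\mathcal{E}[u_1]+\mathcal{E}[u_2]$, so the max must be realized either as (a limit of) genuine double-stopping payoffs, as you do, or dominated by $v$, as the paper does.
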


  \begin{proof}
  	It is sufficient to prove that $\{u_1(\tau),\tau\in\mathcal{S}_0\}$ is admissible. Similar with the proof of Proposition \ref{P5}, $u_1(\tau)$ is $\mathcal{F}_\tau$-measurable and $u_1(\tau)\in\textrm{Dom}^+(\mathcal{E})$. For each fixed $\tau,\sigma\in\mathcal{S}_0$, set $A=\{\tau=\sigma\}$ and $\theta^A=\theta I_A+T I_{A^c}$, where $\theta\in\mathcal{S}_\tau$. It is easy to check that $A\in\mathcal{F}_{\tau\wedge\sigma}$,  $\theta^A\in\mathcal{S}_\sigma$ and
  	\begin{align*}
  		\mathcal{E}_\tau[X(\theta,\tau)]I_A=\mathcal{E}_\tau[X(\theta,\tau)I_A]=\mathcal{E}_\tau[X(\theta^A,\sigma)I_A]=\mathcal{E}_\tau[X(\theta^A,\sigma)]I_A=\mathcal{E}_\sigma[X(\theta^A,\sigma)]I_A\leq u_1(\sigma)I_A.
  	\end{align*}
  	Taking supremum over all $\theta\in\mathcal{S}_\tau$ implies that $u_1(\tau)\leq u_1(\sigma)$ on $A$. By symmetry, we have $u_1(\sigma)\leq u_1(\tau)$ on $A$. Therefore, $u_1(\tau)I_A=u_1(\sigma)I_A$.
  	
  	It is easy to verify that $0\leq \widetilde{X}(\tau)\leq v(\tau)$. By Proposition \ref{P5}, we have
  	\begin{displaymath}
  		\sup_{\tau\in\mathcal{S}_0}\mathcal{E}[\widetilde{X}(\tau)]\leq \sup_{\tau\in\mathcal{S}_0}\mathcal{E}[v(\tau)]\leq \sup_{\tau,\sigma\in\mathcal{S}_0}\mathcal{E}[X(\tau,\sigma)]<\infty.
  	\end{displaymath}
  \end{proof}
	
 The next theorem states that $\{v(S),S\in\mathcal{S}_0\}$ is the smallest $\mathcal{E}$-supermartingale system such that $v(S)\geq \widetilde{X}(S)$, for any $S\in\mathcal{S}_0$. In other words, $v(S)$ corresponds to the value function $u(S)$ associated with the reward family $\{\widetilde{X}(S),S\in\mathcal{S}_0\}$, where
	\begin{equation}\label{2.4}
	u(S)=\esssup_{\tau\in\mathcal{S}_S}\mathcal{E}_S[\widetilde{X}(\tau)].
	\end{equation}
	
	\begin{theorem}\label{T2}
		Let $\{X(\tau,\sigma),\tau,\sigma\in\mathcal{S}_0\}$ be a biadmissible family with $\sup_{\tau_1,\tau_2\in\mathcal{S}_0}\mathcal{E}[X(\tau_1,\tau_2)]<\infty$. Then, for each stopping time $S\in\mathcal{S}_0$, we have $v(S)=u(S)$.
	\end{theorem}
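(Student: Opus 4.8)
The plan is to prove the two inequalities $u(S)\le v(S)$ and $v(S)\le u(S)$ separately. The first is the routine one. For any $\tau\in\mathcal{S}_S$, taking the admissible pairs $(\tau_1,\tau)$ and $(\tau,\tau_2)$ in the definition \eqref{2.1} of $v(\tau)$ shows immediately that $u_1(\tau)\le v(\tau)$ and $u_2(\tau)\le v(\tau)$, hence $\widetilde{X}(\tau)=\max\{u_1(\tau),u_2(\tau)\}\le v(\tau)$. Since $\{v(S),S\in\mathcal{S}_0\}$ is an $\mathcal{E}$-supermartingale system by Proposition \ref{P5}(iii) and $\tau\ge S$, monotonicity of $\mathcal{E}_S$ gives $\mathcal{E}_S[\widetilde{X}(\tau)]\le \mathcal{E}_S[v(\tau)]\le v(S)$. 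Taking the essential supremum over $\tau\in\mathcal{S}_S$ yields $u(S)\le v(S)$.

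For the reverse inequality $v(S)\le u(S)$, the idea is to decompose a double exercise according to which of the two stopping times fires first. Fix $\tau_1,\tau_2\in\mathcal{S}_S$ and set $\tau=\tau_1\wedge\tau_2\in\mathcal{S}_S$. By time consistency, $\mathcal{E}_S[X(\tau_1,\tau_2)]=\mathcal{E}_S[\mathcal{E}_\tau[X(\tau_1,\tau_2)]]$, so it suffices to show $\mathcal{E}_\tau[X(\tau_1,\tau_2)]\le \widetilde{X}(\tau)$. I would split on the event $A=\{\tau_1\le\tau_2\}=\{\tau=\tau_1\}$, which lies in $\mathcal{F}_{\tau_1\wedge\tau_2}=\mathcal{F}_\tau$. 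On $A$ I set $\tau_2^A=\tau_2 I_A+T I_{A^c}$, which belongs to $\mathcal{S}_\tau$ because $\tau_2\ge\tau$ on $A$; biadmissibility gives $X(\tau_1,\tau_2)I_A=X(\tau,\tau_2^A)I_A$, and the local property of $\mathcal{E}_\tau$ then yields $\mathcal{E}_\tau[X(\tau_1,\tau_2)]I_A=\mathcal{E}_\tau[X(\tau,\tau_2^A)]I_A\le u_2(\tau)I_A$. Symmetrically, on $A^c=\{\tau_1>\tau_2\}$ I set $\tau_1^{A^c}=\tau_1 I_{A^c}+T I_A\in\mathcal{S}_\tau$ and obtain $\mathcal{E}_\tau[X(\tau_1,\tau_2)]I_{A^c}\le u_1(\tau)I_{A^c}$. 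Adding the two pieces gives $\mathcal{E}_\tau[X(\tau_1,\tau_2)]\le u_2(\tau)I_A+u_1(\tau)I_{A^c}\le\widetilde{X}(\tau)$.

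Combining the display above with time consistency, $\mathcal{E}_S[X(\tau_1,\tau_2)]\le \mathcal{E}_S[\widetilde{X}(\tau)]\le u(S)$ since $\tau\in\mathcal{S}_S$, and taking the essential supremum over $\tau_1,\tau_2\in\mathcal{S}_S$ completes the proof that $v(S)\le u(S)$.

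I expect the main obstacle to be the measurable bookkeeping in the second direction rather than any conceptual difficulty: one must verify that $\{\tau_1\le\tau_2\}\in\mathcal{F}_{\tau_1\wedge\tau_2}$, truncate correctly so that $\tau_2^A$ and $\tau_1^{A^c}$ genuinely lie in $\mathcal{S}_\tau$ (otherwise $\mathcal{E}_\tau[X(\tau,\cdot)]$ is not controlled by $u_2(\tau)$), and invoke the local property of the conditional nonlinear expectation exactly as in the proofs of Proposition \ref{P1} and Proposition \ref{P5}. Once these manipulations are carried out, the pointwise bound $\mathcal{E}_\tau[X(\tau_1,\tau_2)]\le\widetilde{X}(\tau)$ is the crux, and everything else follows from the supermartingale property and the essential-supremum characterizations already established.
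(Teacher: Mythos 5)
Your proposal is correct and follows essentially the same route as the paper: the inequality $u(S)\le v(S)$ via the domination $\widetilde{X}(\tau)\le v(\tau)$ combined with the $\mathcal{E}$-supermartingale property of $v$, and the inequality $v(S)\le u(S)$ via time consistency at $\tau_1\wedge\tau_2$ and the split on $A=\{\tau_1\le\tau_2\}$ with the bounds $\mathcal{E}_{\tau}[X(\tau_1,\tau_2)]\le u_2(\tau)$ on $A$ and $\le u_1(\tau)$ on $A^c$. If anything, your explicit truncations $\tau_2^A$ and $\tau_1^{A^c}$ make precise a step the paper leaves as ``easy to check.''
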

	
	\begin{proof}
		Fix $S\in\mathcal{S}_0$. Consider two stopping times $\tau_1,\tau_2\in\mathcal{S}_S$. Set $A=\{\tau_1\leq \tau_2\}\in\mathcal{F}_{\tau_1\wedge\tau_2}$. It is easy to check that on the set $A$, we have
		\begin{displaymath}
		\mathcal{E}_{\tau_1}[X(\tau_1,\tau_2)]\leq u_2(\tau_1)\leq \widetilde{X}(\tau_1\wedge\tau_2),
		\end{displaymath}
		and on the set $A^c$, we have
		\begin{displaymath}
		\mathcal{E}_{\tau_2}[X(\tau_1,\tau_2)]\leq u_1(\tau_2)\leq \widetilde{X}(\tau_1\wedge\tau_2).
		\end{displaymath}
		Applying the above results, we obtain that
		\begin{align*}
		\mathcal{E}_{S}[X(\tau_1,\tau_2)]=&\mathcal{E}_S[\mathcal{E}_{\tau_1\wedge\tau_2}[X(\tau_1,\tau_2)]]\\
		=&\mathcal{E}_S[\mathcal{E}_{\tau_1\wedge\tau_2}[X(\tau_1,\tau_2)]I_A+\mathcal{E}_{\tau_1\wedge\tau_2}[X(\tau_1,\tau_2)]I_{A^c}]\\
		=&\mathcal{E}_S[\mathcal{E}_{\tau_1}[X(\tau_1,\tau_2)]I_A+\mathcal{E}_{\tau_2}[X(\tau_1,\tau_2)]I_{A^c}]\\
		\leq &\mathcal{E}_S[\widetilde{X}(\tau_1\wedge\tau_2)]\leq u(S).
		\end{align*}
		Taking supermum over all $\tau_1,\tau_2\in\mathcal{S}_S$ implies that $v(S)\leq u(S)$.
		
		Now it remains to prove the reverse inequality. It is obvious that
		\begin{displaymath}
		v(S)\geq \esssup_{\tau_1\in\mathcal{S}_S}\mathcal{E}_S[X(\tau_1,S)]=u_1(S).
		\end{displaymath}
		Similarly, we have $v(S)\geq u_2(S)$. By Proposition \ref{P5}, $\{v(\tau),\tau\in\mathcal{S}_0\}$ is an $\mathcal{E}$-supermartingale which dominates the family $\{\widetilde{X}(\tau),\tau\in\mathcal{S}_0\}$. Since $\{u(\tau),\tau\in\mathcal{S}_0\}$ is the smallest  $\mathcal{E}$-supermartingale which is no less than $\{\widetilde{X}(\tau),\tau\in\mathcal{S}_0\}$, we finally obtain that $v(S)\geq u(S)$. The proof is complete.
	\end{proof}
	
	With the help of the characterization of the value function $v$ stated in Theorem \ref{T2}, we may construct the optimal stopping times for either the multiple problem \eqref{2.1} or the single problem \eqref{2.2}, \eqref{2.4} if we obtain the optimal stopping times for one of the problems.
	
	\begin{proposition}\label{P6}
		Fix $S\in\mathcal{S}_0$. Suppose that $(\tau_1^*,\tau_2^*)\in\mathcal{S}_S\times\mathcal{S}_S$ is optimal for $v(S)$. Then, we have
		\begin{description}
			\item[(1)] $\tau_1^*\wedge\tau_2^*$ is optimal for $u(S)$;
			\item[(2)] $\tau_1^*$ is optimal for $u_2(\tau_1^*)$ on the set $A$;
			\item[(3)] $\tau_2^*$ is optimal for $u_1*(\tau_2^*)$ on the set $A^c$,
		\end{description}
		where $A=\{\tau_1^*\leq \tau_2^*\}$. On the other hand, suppose that the stopping times $\theta^*,\theta_i^*$, $i=1,2$, satisfy the following conditions:
		\begin{description}
			\item[(i)] $\theta^*$ is optimal for $u(S)$;
			\item[(ii)] $\theta_1^*$ is optimal for $u_2(\theta^*)$;
			\item[(iii)] $\theta_2^*$ is optimal for $u_1(\theta^*)$,.
		\end{description}
		Set
		\begin{equation}\label{2.5}
		\sigma^*_1=\theta^* I_B+\theta_1^* I_{B^c},\ \sigma_2^*=\theta_2^* I_B+\theta^* I_{B^c},
		\end{equation}
		where $B=\{u_1(\theta^*)\leq u_2(\theta^*)\}$. Then, the pair $(\sigma^*_1,\sigma^*_2)$ is optimal for $v(S)$.
	\end{proposition}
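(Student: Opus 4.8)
The plan is to run both implications off the single chain of (in)equalities established in the proof of Theorem \ref{T2}, turning each inequality into an equality whenever optimality is assumed. Throughout I would use time consistency of $\mathcal{E}$, its local property (so that $\mathcal{E}_\sigma$ may be evaluated set-by-set on $\mathcal{F}_\sigma$-measurable partitions), admissibility of the relevant families, and---crucially---strict monotonicity, which upgrades an equality of $\mathcal{E}$-values into an almost sure equality of the underlying random variables.

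For the necessity part, suppose $(\tau_1^*,\tau_2^*)$ is optimal, so $v(S)=\mathcal{E}_S[X(\tau_1^*,\tau_2^*)]$. Recalling from the proof of Theorem \ref{T2} that
\[
\mathcal{E}_S[X(\tau_1^*,\tau_2^*)]=\mathcal{E}_S\big[\mathcal{E}_{\tau_1^*\wedge\tau_2^*}[X(\tau_1^*,\tau_2^*)]\big]\leq \mathcal{E}_S[\widetilde{X}(\tau_1^*\wedge\tau_2^*)]\leq u(S),
\]
and that $u(S)=v(S)$ by Theorem \ref{T2}, every inequality above is forced to be an equality. The identity $\mathcal{E}_S[\widetilde{X}(\tau_1^*\wedge\tau_2^*)]=u(S)$ is precisely the optimality of $\tau_1^*\wedge\tau_2^*$ for $u(S)$, giving (1). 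Next, applying $\mathcal{E}=\mathcal{E}_0$ and using time consistency reduces the first equality to $\mathcal{E}[\mathcal{E}_{\tau_1^*\wedge\tau_2^*}[X(\tau_1^*,\tau_2^*)]]=\mathcal{E}[\widetilde{X}(\tau_1^*\wedge\tau_2^*)]$; since $\mathcal{E}_{\tau_1^*\wedge\tau_2^*}[X(\tau_1^*,\tau_2^*)]\leq \widetilde{X}(\tau_1^*\wedge\tau_2^*)$ holds pointwise, strict monotonicity forces
\[
\mathcal{E}_{\tau_1^*\wedge\tau_2^*}[X(\tau_1^*,\tau_2^*)]=\widetilde{X}(\tau_1^*\wedge\tau_2^*)\quad\text{a.s.}
\]
Restricting to $A=\{\tau_1^*\leq\tau_2^*\}$, where $\tau_1^*\wedge\tau_2^*=\tau_1^*$ and $\tau_2^*\in\mathcal{S}_{\tau_1^*}$, the local property and admissibility turn this into $\mathcal{E}_{\tau_1^*}[X(\tau_1^*,\tau_2^*)]=\widetilde{X}(\tau_1^*)\geq u_2(\tau_1^*)\geq \mathcal{E}_{\tau_1^*}[X(\tau_1^*,\tau_2^*)]$ on $A$, so all terms coincide and $\tau_2^*$ realises $u_2(\tau_1^*)$ on $A$, which is (2); the symmetric restriction to $A^c$ yields (3).

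For the sufficiency part, I would first check that $\sigma_1^*,\sigma_2^*\in\mathcal{S}_S$ (both $\theta^*$ and the $\theta_i^*$ lie in $\mathcal{S}_S$, and $B\in\mathcal{F}_{\theta^*}$) and that $\sigma_1^*\wedge\sigma_2^*=\theta^*$ on the whole space. The key computation is to evaluate $\mathcal{E}_{\theta^*}[X(\sigma_1^*,\sigma_2^*)]$ set-by-set: by the local property it equals $\mathcal{E}_{\theta^*}[X(\sigma_1^*,\sigma_2^*)]I_B+\mathcal{E}_{\theta^*}[X(\sigma_1^*,\sigma_2^*)]I_{B^c}$, and on $B$ (resp.\ $B^c$) the pasted pair coincides with the optimiser of $u_2(\theta^*)$ (resp.\ $u_1(\theta^*)$) supplied by (ii)--(iii), so that, using $B=\{u_1(\theta^*)\leq u_2(\theta^*)\}$, one gets $\mathcal{E}_{\theta^*}[X(\sigma_1^*,\sigma_2^*)]=u_2(\theta^*)I_B+u_1(\theta^*)I_{B^c}=\widetilde{X}(\theta^*)$. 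Then time consistency together with the optimality of $\theta^*$ for $u(S)$ gives
\[
\mathcal{E}_S[X(\sigma_1^*,\sigma_2^*)]=\mathcal{E}_S[\widetilde{X}(\theta^*)]=u(S)=v(S),
\]
so $(\sigma_1^*,\sigma_2^*)$ is optimal for $v(S)$.

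The main obstacle, and the step demanding the most care, is the passage from equality of $\mathcal{E}$-values to pointwise a.s.\ equality in the necessity part: one must reduce the $\mathcal{E}_S$-level equality to an $\mathcal{E}_0$-level equality via time consistency and then invoke strict monotonicity, and afterwards split correctly onto $A$ and $A^c$ using the local property and admissibility (so that $\mathcal{E}_{\tau_1^*\wedge\tau_2^*}[\cdot]$ and $\widetilde{X}(\tau_1^*\wedge\tau_2^*)$ may be replaced by their values at $\tau_1^*$, resp.\ $\tau_2^*$). In the sufficiency part the only bookkeeping subtlety is to match the indices $\theta_1^*,\theta_2^*$ with the component of $\widetilde{X}(\theta^*)$ that is active on $B$, resp.\ $B^c$; once $\sigma_1^*\wedge\sigma_2^*=\theta^*$ is observed, the remainder is a direct application of the local property and time consistency.
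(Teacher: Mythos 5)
Your proposal is correct and follows essentially the same route as the paper: both directions run the chain of inequalities from the proof of Theorem \ref{T2}, forcing them into equalities under the optimality hypotheses, with the converse direction being the same set-by-set computation via the local property and time consistency. You in fact supply a detail the paper leaves implicit (passing from equality of $\mathcal{E}$-values to a.s.\ equality of $\mathcal{E}_{\tau_1^*\wedge\tau_2^*}[X(\tau_1^*,\tau_2^*)]$ and $\widetilde{X}(\tau_1^*\wedge\tau_2^*)$ via strict monotonicity), and your index bookkeeping matches the convention actually used in the paper's proof and in Theorem \ref{T4} (the labels in (ii)--(iii) and (2)--(3) of the statement are evidently transposed).
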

	
	\begin{proof}
		We first prove the necessary condition of optimality for the multiple stopping problem. Let $(\tau_1^*,\tau_2^*)$ be optimal for $v(S)$. Since $v(S)=u(S)$, all the inequalities in the proof of Theorem \ref{T2} turn into equalities if $(\tau_1,\tau_2)$ is replaced by $(\tau_1^*,\tau_2^*)$. More precisely, we have
		\begin{equation}\begin{split}\label{2}
		v(S)&=\mathcal{E}_S[X(\tau_1^*,\tau_2^*)]=\mathcal{E}_S[\widetilde{X}(\tau_1^*\wedge\tau_2^*)]=u(S),\\
		\mathcal{E}_{\tau_1^*}[X(\tau_1^*,\tau_2^*)]&=u_2(\tau_1^*)=u_2(\tau_1^*\wedge\tau_2^*)=\widetilde{X}(\tau_1^*\wedge\tau_2^*) \textrm{ on the set } A,\\
		\mathcal{E}_{\tau_2^*}[X(\tau_1^*,\tau_2^*)]&=u_1(\tau_2^*)=u_1(\tau_1^*\wedge\tau_2^*)=\widetilde{X}(\tau_1^*\wedge\tau_2^*) \textrm{ on the set } A^c,
		\end{split}\end{equation}
		which implies the assertion (1)-(3).
		
		Now suppose that $\theta^*,\theta^*_i$ satisfy conditions (i)-(iii), $i=1,2$. Noting that $B\in\mathcal{F}_{\theta^*}$, by simple calculation, we have
		\begin{align*}
		u(S)=&\mathcal{E}_S[\widetilde{X}(\theta^*)]=\mathcal{E}_S[u_2(\theta^*)I_B+u_1(\theta^*)I_{B^c}]\\
		=&\mathcal{E}_S[\mathcal{E}_{\theta^*}[X(\theta^*,\theta^*_2)]I_B+\mathcal{E}_{\theta^*}[X(\theta^*_1,\theta^*)]I_{B^c}]\\
		=&\mathcal{E}_S[\mathcal{E}_{\theta^*}[X(\theta^*,\theta^*_2)I_B+X(\theta^*_1,\theta^*)I_{B^c}]]\\
		=&\mathcal{E}_S[X(\sigma^*_1,\sigma_2^*)].
		\end{align*}
		By Theorem \ref{T2}, it follows that $v(S)=\mathcal{E}_S[X(\sigma^*_1,\sigma_2^*)]$. The proof is complete.
	\end{proof}
	
	\begin{remark}
		By Equation \eqref{2}, it is easy to check that
		\begin{displaymath}
		A=\{\tau_1^*\leq \tau_2^*\}\subset \{u_1(\tau^*_1\wedge \tau_2^*)\leq u_2(\tau^*_1\wedge \tau_2^*)\}.
		\end{displaymath}
		Furthermore, since $\theta^*_i\in\mathcal{S}_{\theta^*}$, by the definition of $\sigma^*_i$, $i=1,2$, we have
		\begin{displaymath}
		B=\{u_1(\theta^*)\leq u_2(\theta^*)\}\subset \{\sigma^*_1\leq \sigma^*_2\}.
		\end{displaymath}
		Therefore, we conclude that
		\begin{displaymath}
		B\subset \{\sigma^*_1\leq \sigma^*_2\}\subset \{u_1(\sigma^*_1\wedge \sigma_2^*)\leq u_2(\sigma^*_1\wedge \sigma_2^*)\}=B.
		\end{displaymath}
		That is, $B=\{\sigma^*_1\leq \sigma^*_2\}= \{u_1(\theta^*)\leq u_2(\theta^*)\}$. However, the inclusion for $A$ may be strict. We may refer to Remark 2.2 in \cite{KQR}.
	\end{remark}
	
	By Proposition \ref{P6}, in order to obtain the multiple optimal stopping times for $v(S)$ defined by \eqref{2.1}, it is sufficient to derive the optimal stopping times for the auxiliary single stopping problems \eqref{2.2} and \eqref{2.4}. For this purpose, according to Theorem \ref{T1}, we need to study some regularity results for $\{\widetilde{X}(\tau),\tau\in\mathcal{S}_0\}$. Before establishing this property, we first introduce the definition of continuity for the biadmissible family.
	
	\begin{definition}
		A biadmissible family $\{X(\tau,\sigma),\tau,\sigma\in\mathcal{S}_0\}$ is said to be right-continuous (resp. left-continuous) along stopping times in $\mathcal{E}$-expectation [RC$\mathcal{E}$ (resp., LC$\mathcal{E}$)] if, for any $\tau,\sigma\in\mathcal{S}_0$ and any sequence $\{\tau_n\}_{n\in\mathbb{N}}, \{\sigma_n\}_{n\in\mathbb{N}}\subset \mathcal{S}_0$ such that $\tau_n\downarrow \tau$, $\sigma_n\downarrow\sigma$ (resp., $\tau_n\uparrow \tau$, $\sigma_n\uparrow\sigma$), one has $\mathcal{E}[X(\tau,\sigma)]=\lim_{n\rightarrow\infty}\mathcal{E}[X(\tau_n,\sigma_n)]$.
	\end{definition}
	
	By a similar proof as Proposition \ref{P3}, we have the following regularity result.
	
	\begin{proposition}\label{P7}
		If the biadmissible family $\{X(\tau,\sigma),\tau,\sigma\in\mathcal{S}_0\}$ is RC$\mathcal{E}$, then, the family $\{v(S),S\in\mathcal{S}_0\}$ defined by \eqref{2.1} is RC$\mathcal{E}$.
	\end{proposition}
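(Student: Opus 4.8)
The plan is to follow the contradiction argument of Proposition \ref{P3} almost verbatim, replacing single stopping times by pairs. First I would record the two consequences of Proposition \ref{P5} that drive the argument: that $\{v(S),S\in\mathcal{S}_0\}$ is an $\mathcal{E}$-supermartingale system, so that $\mathcal{E}[v(\tau)]\leq \mathcal{E}[v(\sigma)]$ whenever $\tau\geq\sigma$, and the value identity $\mathcal{E}[v(S)]=\sup_{\tau_1,\tau_2\in\mathcal{S}_S}\mathcal{E}[X(\tau_1,\tau_2)]$ from part (iv). The supermartingale property guarantees that $\mathcal{E}[v(S_n)]$ is monotone along any sequence $S_n\downarrow S$, so a failure of RC$\mathcal{E}$ at some $S$ produces a gap: there are a constant $\varepsilon>0$ and stopping times $S_n\downarrow S$ with $\sup_n\mathcal{E}[v(S_n)]+\varepsilon\leq \mathcal{E}[v(S)]$.

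Next I would transfer this gap from the value function down to the reward family. Using the identity at $S$, choose $\sigma_1,\sigma_2\in\mathcal{S}_S$ with $\mathcal{E}[X(\sigma_1,\sigma_2)]\geq \mathcal{E}[v(S)]-\varepsilon/2$; combining this with the identity $\mathcal{E}[v(S_n)]=\sup_{\tau_1,\tau_2\in\mathcal{S}_{S_n}}\mathcal{E}[X(\tau_1,\tau_2)]$ yields, for every $n$,
$$\sup_{\tau_1,\tau_2\in\mathcal{S}_{S_n}}\mathcal{E}[X(\tau_1,\tau_2)]+\frac{\varepsilon}{2}\leq \mathcal{E}[X(\sigma_1,\sigma_2)].$$
The decisive step is then to feed a concrete competitor into the left-hand supremum: the pair $(\sigma_1\vee S_n,\sigma_2\vee S_n)$ lies in $\mathcal{S}_{S_n}\times\mathcal{S}_{S_n}$, so that $\mathcal{E}[X(\sigma_1\vee S_n,\sigma_2\vee S_n)]+\varepsilon/2\leq \mathcal{E}[X(\sigma_1,\sigma_2)]$.

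Finally, because $S_n\downarrow S$ and $\sigma_i\geq S$, both coordinates satisfy $\sigma_i\vee S_n\downarrow \sigma_i$, so the joint RC$\mathcal{E}$ hypothesis on the biadmissible family gives $\mathcal{E}[X(\sigma_1\vee S_n,\sigma_2\vee S_n)]\to \mathcal{E}[X(\sigma_1,\sigma_2)]$; passing to the limit produces $\mathcal{E}[X(\sigma_1,\sigma_2)]+\varepsilon/2\leq \mathcal{E}[X(\sigma_1,\sigma_2)]$, the desired contradiction. I expect the only point requiring genuine care to be the verification at the heart of this last step: one must check that the perturbed pair $(\sigma_1\vee S_n,\sigma_2\vee S_n)$ is truly an admissible competitor in $\mathcal{S}_{S_n}\times\mathcal{S}_{S_n}$ and that \emph{both} of its coordinates decrease to $(\sigma_1,\sigma_2)$, so that the two-parameter RC$\mathcal{E}$ definition (which demands $\tau_n\downarrow\tau$ and $\sigma_n\downarrow\sigma$ simultaneously) actually applies. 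This is precisely the feature that distinguishes the double-stopping argument from the single one, but it holds automatically, since taking the maximum with $S_n$ preserves the stopping-time property and is continuous from above.
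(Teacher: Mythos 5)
Your argument is correct and is exactly the adaptation of the contradiction proof of Proposition \ref{P3} that the paper has in mind (the paper omits the proof, stating only that it is "similar" to that of Proposition \ref{P3}). The one point you flag as needing care — that $(\sigma_1\vee S_n,\sigma_2\vee S_n)$ is a valid competitor in $\mathcal{S}_{S_n}\times\mathcal{S}_{S_n}$ with both coordinates decreasing to $(\sigma_1,\sigma_2)$ so the two-parameter RC$\mathcal{E}$ definition applies — is indeed the only new ingredient, and you verify it correctly.
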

	
	The regularity of the new reward family $\{\widetilde{X}(\tau),\tau\in\mathcal{S}_0\}$ requires some strong continuity of the biadmissible family. Due to the nonlinearity of the expectation, the definition is slightly different from Definition 2.3 in \cite{KQR}.
	\begin{definition}
		A biadmissible family $\{X(\tau,\sigma),\tau,\sigma\in\mathcal{S}_0\}$ is said to be uniformly right-continuous (resp. left-continuous) along stopping times in $\mathcal{E}$-expectation [URC$\mathcal{E}$ (resp., ULC$\mathcal{E}$)] if, for any $\sigma\in\mathcal{S}_0$ and any sequence $\{\sigma_n\}_{n\in\mathbb{N}}\subset \mathcal{S}_0$ such that  $\sigma_n\downarrow\sigma$ (resp., $\sigma_n\uparrow\sigma$), one has
		\begin{align*}
		&\lim_{n\rightarrow\infty}\sup_{\tau\in\mathcal{S}_0}\mathcal{E}[|X(\tau,\sigma)-X(\tau,\sigma_n)|]=0,\\&\lim_{n\rightarrow\infty}\sup_{\tau\in\mathcal{S}_0}\mathcal{E}[|X(\sigma,\tau)-X(\sigma_n,\tau)|]=0.
		\end{align*}
		Besides, the biadmissible family is said to be uniformly continuous along stopping times in $\mathcal{E}$-expectation (UC$\mathcal{E}$) if it is both URC$\mathcal{E}$ and ULC$\mathcal{E}$.
	\end{definition}
	
	\begin{definition}
		An $\mathbb{F}$-expectation $(\mathcal{E},\textrm{Dom}(\mathcal{E}))$ is said to be dominated by another $\mathbb{F}$-expectation $(\widetilde{\mathcal{E}},\textrm{Dom}(\widetilde{\mathcal{E}}))$ if $\textrm{Dom}(\mathcal{E})\subset \textrm{Dom}(\widetilde{\mathcal{E}})$ and for any $\tau\in\mathcal{S}_0$ and $\xi,\eta\in \textrm{Dom}(\mathcal{E})$, one has
		\begin{displaymath}
			\mathcal{E}_\tau[\xi+\eta]-\mathcal{E}_\tau[\eta]\leq \widetilde{\mathcal{E}}_\tau[\xi].
		\end{displaymath}
	\end{definition}

    \begin{remark}\label{R3}
    	By the requirements on the domain of $\mathcal{E}$ (see Definition \ref{D2.1} and Assumptions (H3)-(H5)), for any $\xi,\eta\in\textrm{Dom}(\mathcal{E})$, we may not conclude that $\xi-\eta\in \textrm{Dom}(\mathcal{E})$. Therefore, the above definition of dominance cannot be written as
    	\begin{displaymath}
    		\mathcal{E}_\tau[\xi]-\mathcal{E}_\tau[\eta]\leq \widetilde{\mathcal{E}}_\tau[\xi-\eta].
    	\end{displaymath}
    	However, if $(\mathcal{E},\textrm{Dom}(\mathcal{E}))$ is dominated by  $(\widetilde{\mathcal{E}},\textrm{Dom}(\widetilde{\mathcal{E}}))$, we have, for any $\tau\in\mathcal{S}_0$ and $\xi,\eta\in \textrm{Dom}(\mathcal{E})$
    	\begin{equation}\label{0}
    		|\mathcal{E}_\tau[\xi]-\mathcal{E}_\tau[\eta]|\leq \widetilde{\mathcal{E}}_\tau[|\xi-\eta|].
    	\end{equation}
    	First, if $\xi\in\textrm{Dom}(\mathcal{E})$, noting that $|\xi|=\xi I_{\{\xi\geq 0\}}$ and $\{\xi\geq 0\}\in\mathcal{F}_T$, by (D2) in Definition \ref{D2.1}, we have $|\xi|\in \textrm{Dom}(\mathcal{E})$. Since $0\leq |\xi-\eta|\leq |\xi|+|\eta|$, by (D2) and (D3), it follows that $|\xi-\eta|\in\textrm{Dom}(\mathcal{E})$. It is easy to check that
    	\begin{displaymath}
    		\mathcal{E}_\tau[\xi]-\mathcal{E}_\tau[\eta]\leq \mathcal{E}_\tau[\eta+|\xi-\eta|]-\mathcal{E}_\tau[\eta]\leq \widetilde{\mathcal{E}}_\tau[|\xi-\eta|].
    	\end{displaymath}
    	By the symmetry of $\xi$ and $\eta$, we obtain Equation \eqref{0}.
    \end{remark}

    \begin{example}
    	\begin{description}
    		\item[(1)] If the $\mathbb{F}$-expectation $(\mathcal{E},\textrm{Dom}(\mathcal{E}))$ satisfies (H6), then it is dominated by itself. Especially, $(\{E_t[\cdot]\}_{t\in[0,T]}, L^1(\mathcal{F}_T))$ is dominated by itself;
            \item[(2)] For a generator $g$ with Lipschitz constant $\kappa$, the $g$-expectation  $(\{\mathcal{E}^g_t[\cdot]\}_{t\in[0,T]},L^2(\mathcal{F}_T))$ is dominated by $(\{\mathcal{E}^{\tilde{g}}_t[\cdot]\}_{t\in[0,T]},L^2(\mathcal{F}_T))$, where $\tilde{g}(t,z)=\kappa|z|$.
            	\end{description}
    \end{example}
	
	\begin{theorem}\label{T3}
		Let $(\widetilde{\mathcal{E}},\textrm{Dom}(\widetilde{\mathcal{E}}))$ be an $\mathbb{F}$-expectation satisfying Assumptions (H0)-(H5). Suppose that the $\mathbb{F}$-expectation $(\mathcal{E},\textrm{Dom}(\mathcal{E}))$ is  dominated by $(\widetilde{\mathcal{E}},\textrm{Dom}(\widetilde{\mathcal{E}}))$ and the biadmissible family $\{X(\tau,\sigma),\tau,\sigma\in\mathcal{S}_0\}$ is URC$\widetilde{\mathcal{E}}$  with $\sup_{\tau,\sigma\in\mathcal{S}_0}\mathcal{E}[X(\tau,\sigma)]<\infty$. Then, the family $\{\widetilde{X}(\tau),\tau\in\mathcal{S}_0\}$ defined by \eqref{2.3} is RC$\mathcal{E}$.
	\end{theorem}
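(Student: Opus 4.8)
The plan is to prove the stronger quantitative statement that $\mathcal{E}[\widetilde{X}(\tau_n)]\to\mathcal{E}[\widetilde{X}(\tau)]$ whenever $\tau_n\downarrow\tau$, by comparing $\widetilde{X}$ with an auxiliary value function obtained by freezing the ``diagonal'' parameter at the limit $\tau$. The difficulty is that $\widetilde{X}=\max\{u_1,u_2\}$ is \emph{not} itself an $\mathcal{E}$-supermartingale system, so the contradiction argument of Proposition~\ref{P3} cannot be applied to it directly; moreover, in passing from $\widetilde{X}(\tau_n)$ to $\widetilde{X}(\tau)$ three things move at once --- the conditioning time, the frozen argument appearing inside $u_1,u_2$, and the index set of the essential suprema. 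I would isolate the conditioning time and index set (a genuine value-function regularity, handled by Proposition~\ref{P3}) from the frozen argument (a uniform-continuity error controlled by URC$\widetilde{\mathcal{E}}$).

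Concretely, write $X_1(\sigma,\rho):=X(\sigma,\rho)$ and $X_2(\sigma,\rho):=X(\rho,\sigma)$, so that $\widetilde{X}(\rho)=\esssup_{\sigma\in\mathcal{S}_\rho,\,i\in\{1,2\}}\mathcal{E}_\rho[X_i(\sigma,\rho)]$, the supremum running over stopping times together with an $\mathcal{F}_\sigma$-measurable choice of the index $i$. Freezing the parameter at the limit, I set $W_i(\sigma):=X_i(\sigma,\tau)$ and introduce the combined value function $G(\rho):=\esssup_{\sigma\in\mathcal{S}_\rho,\,i}\mathcal{E}_\rho[W_i(\sigma)]$. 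The crux of the construction is that $G$ \emph{is} the value function of one (enlarged) admissible family, so by the arguments of Proposition~\ref{P1} it is an $\mathcal{E}$-supermartingale system with $\mathcal{E}[G(\rho)]=\sup_{\sigma,i}\mathcal{E}[W_i(\sigma)]$, while on the diagonal $G(\tau)=\max\{u_1(\tau),u_2(\tau)\}=\widetilde{X}(\tau)$. Since $\sigma\geq\rho\geq\tau$ keeps every $W_i(\sigma)$ adapted, and since $\sigma_k\downarrow\sigma$ gives $|\mathcal{E}[W_i(\sigma_k)]-\mathcal{E}[W_i(\sigma)]|\leq\widetilde{\mathcal{E}}[|W_i(\sigma_k)-W_i(\sigma)|]\to0$ by the domination inequality \eqref{0} together with URC$\widetilde{\mathcal{E}}$, the combined reward is RC$\mathcal{E}$ along stopping times greater than $\tau$; hence Proposition~\ref{P3} in the localized form of Remark~\ref{r1}(ii) yields $\mathcal{E}[G(\tau_n)]\to\mathcal{E}[G(\tau)]=\mathcal{E}[\widetilde{X}(\tau)]$.

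It then remains to control the parameter-freezing error $\mathcal{E}[\widetilde{X}(\tau_n)]-\mathcal{E}[G(\tau_n)]$. Using $|\esssup_j A_j-\esssup_j B_j|\leq\esssup_j|A_j-B_j|$ over the common index $(\sigma,i)\in\mathcal{S}_{\tau_n}\times\{1,2\}$ and the domination inequality \eqref{0}, I would bound
\begin{displaymath}
|\widetilde{X}(\tau_n)-G(\tau_n)|\leq\esssup_{\sigma\in\mathcal{S}_{\tau_n},\,i}\widetilde{\mathcal{E}}_{\tau_n}\big[|X_i(\sigma,\tau_n)-X_i(\sigma,\tau)|\big].
\end{displaymath}
The family inside is upward directed (given $(\sigma_1,i_1),(\sigma_2,i_2)$, combine them on the $\mathcal{F}_{\tau_n}$-set where one conditional modulus dominates, using the local property and a randomized index), so the analogue of \eqref{1.3} for $\widetilde{\mathcal{E}}$ together with time consistency converts the essential supremum into a genuine supremum:
\begin{displaymath}
\widetilde{\mathcal{E}}[|\widetilde{X}(\tau_n)-G(\tau_n)|]\leq\max_{i\in\{1,2\}}\sup_{\sigma\in\mathcal{S}_0}\widetilde{\mathcal{E}}\big[|X_i(\sigma,\tau_n)-X_i(\sigma,\tau)|\big],
\end{displaymath}
and each of the two suprema tends to $0$ by URC$\widetilde{\mathcal{E}}$. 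By \eqref{0} at the deterministic time, $|\mathcal{E}[\widetilde{X}(\tau_n)]-\mathcal{E}[G(\tau_n)]|\leq\widetilde{\mathcal{E}}[|\widetilde{X}(\tau_n)-G(\tau_n)|]\to0$, which combined with the previous paragraph gives $\mathcal{E}[\widetilde{X}(\tau_n)]\to\mathcal{E}[\widetilde{X}(\tau)]$, as required. Note that folding the two indices into one essential supremum is exactly what avoids any appeal to subadditivity of $\widetilde{\mathcal{E}}$: the finite maximum over $i$ is extracted \emph{outside} $\widetilde{\mathcal{E}}$.

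I expect the main obstacle to be precisely this first conceptual step: recognizing that the maximum of the two auxiliary value functions must be treated as a single value function $G$ over an enlarged (stopping-time, index) family, so that the supermartingale property --- and with it the regularity argument of Proposition~\ref{P3} --- survives, rather than attempting to combine separate regularity statements for $u_1$ and $u_2$, which would force an $\widetilde{\mathcal{E}}$-$L^1$ modulus of continuity for each value function that is not available under (H0)--(H5) alone. The remaining work is bookkeeping: verifying upward-directedness with the randomized index and keeping every relevant stopping time no smaller than $\tau$, so that adaptedness holds and the localized form of Proposition~\ref{P3} applies.
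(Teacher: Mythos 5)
Your overall decomposition --- freeze the second argument at the limit $\tau$, handle the motion of the conditioning time by the value-function regularity of Proposition \ref{P3} in the localized form of Remark \ref{r1}, and control the frozen-argument error by URC$\widetilde{\mathcal{E}}$ together with the domination inequality \eqref{0} --- is exactly the paper's strategy, where it is carried out for $u_1$ (and, by symmetry, $u_2$) via the auxiliary function $U_1(S,\theta)$ and Lemma \ref{L3}. The gap lies in the step you yourself identify as the crux. The representation $\widetilde{X}(\rho)=\esssup_{\sigma\in\mathcal{S}_\rho,\,i}\mathcal{E}_\rho[X_i(\sigma,\rho)]$ with an $\mathcal{F}_\sigma$-measurable index $i$ is false: an element of that enlarged family has the form $\mathcal{E}_\rho[X_1(\sigma,\rho)I_A+X_2(\sigma,\rho)I_{A^c}]$ with $A\in\mathcal{F}_\sigma$, and since $\sigma\geq\rho$ the zero--one law cannot split this at time $\rho$. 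Already for the classical linear expectation, taking $A=\{X_1(\sigma,\rho)\geq X_2(\sigma,\rho)\}$ yields $E_\rho[X_1(\sigma,\rho)\vee X_2(\sigma,\rho)]$, which can strictly exceed $u_1(\rho)\vee u_2(\rho)$ (e.g.\ $X_1=I_B$, $X_2=I_{B^c}$ with $B\in\mathcal{F}_\sigma$ independent of $\mathcal{F}_\rho$). Hence $G(\tau)\geq\widetilde{X}(\tau)$ with possibly strict inequality, and your argument identifies $\lim_n\mathcal{E}[\widetilde{X}(\tau_n)]$ with the wrong quantity. If instead you force $i$ to be $\mathcal{F}_\rho$-measurable so that the local property gives $G(\rho)=U_1(\rho,\tau)\vee U_2(\rho,\tau)$ and the diagonal identity holds, then the index set varies with the evaluation time, upward directedness in the sense needed for Proposition \ref{P1} is lost, and $U_1(\cdot,\tau)\vee U_2(\cdot,\tau)$ is not an $\mathcal{E}$-supermartingale system --- precisely the obstruction you flagged for $\widetilde{X}$ itself, which does not disappear upon freezing the parameter.

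A secondary instance of the same problem appears in your error estimate: bounding the randomized-index term $\widetilde{\mathcal{E}}[\,|X_i(\sigma,\tau_n)-X_i(\sigma,\tau)|\,]$ by $\max_{i\in\{1,2\}}\sup_{\sigma}\widetilde{\mathcal{E}}[\,|X_i(\sigma,\tau_n)-X_i(\sigma,\tau)|\,]$ requires extracting a maximum out of $\widetilde{\mathcal{E}}$ applied to the mixture $|X_1(\sigma,\tau_n)-X_1(\sigma,\tau)|I_{\{i=1\}}+|X_2(\sigma,\tau_n)-X_2(\sigma,\tau)|I_{\{i=2\}}$, and this needs either sub-additivity of $\widetilde{\mathcal{E}}$ (not assumed: $\widetilde{\mathcal{E}}$ only satisfies (H0)--(H5)) or a splitting that fails because $\{i=1\}\notin\mathcal{F}_0$. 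The paper sidesteps both issues by proving the RC$\mathcal{E}$ property separately for $\{u_1(\tau)\}$ and $\{u_2(\tau)\}$, each of which genuinely is the value function of a single admissible family once the second argument is frozen, via $\mathcal{E}[u_1(\theta_n)]\to\mathcal{E}[U_1(\theta_n,\theta)]\to\mathcal{E}[U_1(\theta,\theta)]$. The passage from the two families $u_1,u_2$ to their pointwise maximum \eqref{2.3} is the one point your construction was designed to handle, and it still requires a separate argument; the enlarged-family device as stated does not supply one.
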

	
	\begin{proof}
		By the definition of $\widetilde{X}$, we only need to prove that the family $\{u_1(\tau),\tau\in\mathcal{S}_0\}$ is RC$\mathcal{E}$.  Let $\{\theta_n\}_{n\in\mathbb{N}}$ be a sequence of stopping times such that $\theta_n\downarrow\theta$. Since $\{X(\tau,\sigma),\tau,\sigma\in\mathcal{S}_0\}$ is URC$\widetilde{\mathcal{E}}$, by Equation \eqref{0}, we have 
		\begin{displaymath}
			\lim_{n\rightarrow\infty}|\mathcal{E}[X(\tau_n,\sigma)]-\mathcal{E}[X(\tau,\sigma)]|\leq \lim_{n\rightarrow\infty}\widetilde{\mathcal{E}}[|X(\tau_n,\sigma)-X(\tau,\sigma)|]=0.
		\end{displaymath}
		 It follows that for each fixed $\sigma\in\mathcal{S}_0$, the family $\{X(\tau,\sigma),\tau\in\mathcal{S}_\sigma\}$ is admissible and right-continuous in $\mathcal{E}$-expectation along stopping times greater than $\sigma$ (It is important to note that the whole family $\{X(\tau,\sigma),\tau\in\mathcal{S}_0\}$ may not be admissible since $X(\tau,\sigma)$ is $\mathcal{F}_\sigma$-measurable rather than $\mathcal{F}_\tau$-measurable if $\tau\leq \sigma$).
		 By Proposition \ref{P3} and Remark \ref{r1}, we obtain that the family $\{U_1(S,\theta),S\in\mathcal{S}_0\}$ is right-continuous in $\mathcal{E}$-expectation along stopping times greater than $\theta$, where
		 \begin{equation}\label{2.7}
		 	U_1(S,\theta)=\esssup_{\tau_1\in\mathcal{S}_{S}}\mathcal{E}_S[X(\tau_1,\theta)].
		 \end{equation}
		 That is, $\lim_{n\rightarrow\infty}\mathcal{E}[U_1(\theta_n,\theta)]=\mathcal{E}[U_1(\theta,\theta)]$. 
		
		Now, we admit the following lemma and the proof will be postponsed later.
		
		\begin{lemma}\label{L3}
			For any stopping times $\tau,\sigma_1,\sigma_2$, we have
			\begin{displaymath}
			|\mathcal{E}[U_1(\tau,\sigma_1)]-\mathcal{E}[U_1(\tau,\sigma_2)]|\leq \sup_{S\in\mathcal{S}_0}\widetilde{\mathcal{E}}[|X(S,\sigma_1)-X(S,\sigma_2)|].
			\end{displaymath}
		\end{lemma}
		Therefore, by the URC$\widetilde{\mathcal{E}}$ property of $\{X(\tau,\sigma),\tau,\sigma\in\mathcal{S}_0\}$, as $n$ goes to infinity, we obtain that
		\begin{align*}
		|\mathcal{E}[U_1(\theta_n,\theta)]-\mathcal{E}[U_1(\theta_n,\theta_n)]|\leq \sup_{S\in\mathcal{S}_0}\widetilde{\mathcal{E}}[|X(S,\theta)-X(S,\theta_n)|]\rightarrow 0.
		\end{align*}
		The above analysis indicates that
		\begin{align*}
			&\lim_{n\rightarrow\infty}|\mathcal{E}[u_1(\theta)]-\mathcal{E}[u_1(\theta_n)]|=\lim_{n\rightarrow\infty}|\mathcal{E}[U_1(\theta,\theta)]-\mathcal{E}[U_1(\theta_n,\theta_n)]|\\
			\leq &\lim_{n\rightarrow\infty}|\mathcal{E}[U_1(\theta,\theta)]-\mathcal{E}[U_1(\theta_n,\theta)]|+\lim_{n\rightarrow\infty}|\mathcal{E}[U_1(\theta_n,\theta)]-\mathcal{E}[U_1(\theta_n,\theta_n)]|=0
		\end{align*}
		The proof is complete.
	\end{proof}
	
	\begin{proof}[Proof of Lemma \ref{L3}]
		By a similar analysis as the proof of Proposition \ref{P1}, for each fixed $\tau\in\mathcal{S}_0$, there exists a sequence of stopping times $\{S_m\}_{m\in\mathbb{N}}\subset \mathcal{S}_{\tau}$ such that
		\begin{displaymath}
		\widetilde{\mathcal{E}}_{\tau}[|X(S_m,\sigma_1)-X(S_m,\sigma_2)|]\uparrow \esssup_{\tau_1\in\mathcal{S}_{\tau}}\widetilde{\mathcal{E}}_{\tau}[|X(\tau_1,\sigma_1)-X(\tau_1,\sigma_2)|]
		\end{displaymath}
		By simple calculation, we have
		\begin{align*}
		|\mathcal{E}[U_1(\tau,\sigma_1)]-\mathcal{E}[U_1(\tau,\sigma_2)]|\leq &\widetilde{\mathcal{E}}[|\esssup_{\tau_1\in\mathcal{S}_{\tau}}\mathcal{E}_{\tau}[X(\tau_1,\sigma_1)]-\esssup_{\tau_1\in\mathcal{S}_{\tau}}\mathcal{E}_{\tau}[X(\tau_1,\sigma_2)]|]\\
		\leq &\widetilde{\mathcal{E}}[\esssup_{\tau_1\in\mathcal{S}_{\tau}}|\mathcal{E}_{\tau}[X(\tau_1,\sigma_1)-X(\tau_1,\sigma_2)]|]\\
		\leq &\widetilde{\mathcal{E}}[\esssup_{\tau_1\in\mathcal{S}_{\tau}}\widetilde{\mathcal{E}}_{\tau}[|X(\tau_1,\sigma_1)-X(\tau_1,\sigma_2)|]]\\
		\leq &\liminf_{m\rightarrow\infty}\widetilde{\mathcal{E}}[|X(S_m,\sigma_1)-X(S_m,\sigma_2)|]\\
		\leq &\sup_{S\in\mathcal{S}_0}\widetilde{\mathcal{E}}[|X(S,\sigma_1)-X(S,\sigma_2)|].
		\end{align*}
		The proof is complete.
	\end{proof}

    The main difficulty is to prove the LC$\mathcal{E}$ property of the reward family $\{\widetilde{X}(\tau),\tau\in\mathcal{S}_0\}$ due to some measurability issues. More precisely,  let $\{\theta_n\}_{n\in\mathbb{N}}$ be a sequence of stopping times such that $\theta_n\uparrow\theta$. We need to prove that $\lim_{n\rightarrow\infty}\mathcal{E}[u_1(\theta_n)]=\mathcal{E}[u_1(\theta)]$. However, we cannot follow the proof of Theorem \ref{T3} of the RC$\mathcal{E}$ property. The problem is that the relation $\lim_{n\rightarrow\infty}\mathcal{E}[U_1(\theta_n,\theta)]=\mathcal{E}[U_1(\theta,\theta)]=\mathcal{E}[u_1(\theta)]$ may not hold, where $U_1$ is given by \eqref{2.7}. Although $\{U_1(S,\theta),S\in\mathcal{S}_0\}$ can be interpreted as the value function associated with the family $\{X(\tau_1, \theta),\tau_1\in\mathcal{S}_0\}$, we cannot apply Proposition \ref{P4} since the reward $\{X(\tau_1,\theta),\tau_1\in\mathcal{S}_0\}$ is not admissible. The main idea is to modify this reward slightly and then apply the LC$\mathcal{E}$ property of the modified reward family stated in Theorem \ref{t1}.

    \begin{theorem}\label{t2}
    	 Suppose that the $\mathbb{F}$-expectation $(\mathcal{E},\textrm{Dom}(\mathcal{E}))$ satisfies (H0)-(H7)  and the biadmissible family $\{X(\tau,\sigma),\tau,\sigma\in\mathcal{S}_0\}$ is UC${\mathcal{E}}$ with $\sup_{\tau,\sigma\in\mathcal{S}_0}\mathcal{E}[X(\tau,\sigma)]<\infty$. Then, the family $\{\widetilde{X}(\tau),\tau\in\mathcal{S}_0\}$ defined by \eqref{2.3} is LC$\mathcal{E}$.
    \end{theorem}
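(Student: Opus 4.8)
The plan is to reduce the statement to the two auxiliary families $\{u_1(\tau)\}$ and $\{u_2(\tau)\}$. Since $\widetilde{X}=\max\{u_1,u_2\}$ and, because $\mathcal{E}$ satisfies (H6) and is therefore dominated by itself, the inequality \eqref{0} together with subadditivity controls $|\mathcal{E}[\widetilde{X}(\theta_n)]-\mathcal{E}[\widetilde{X}(\theta)]|$ by the LC$\mathcal{E}$ moduli of $u_1$ and $u_2$, it suffices, exactly as in the proof of Theorem \ref{T3}, to prove that $\{u_1(\tau),\tau\in\mathcal{S}_0\}$ is LC$\mathcal{E}$ ($u_2$ being symmetric). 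So I would fix $\theta\in\mathcal{S}_0$ and a sequence $\theta_n\uparrow\theta$ and show $\mathcal{E}[u_1(\theta_n)]\to\mathcal{E}[u_1(\theta)]$. The lower bound $\liminf_n\mathcal{E}[u_1(\theta_n)]\ge\mathcal{E}[u_1(\theta)]$ is the easy half: for each fixed $\tau_1\in\mathcal{S}_\theta\subset\mathcal{S}_{\theta_n}$ one has $\mathcal{E}[u_1(\theta_n)]\ge\mathcal{E}[X(\tau_1,\theta_n)]$, and the right-hand side tends to $\mathcal{E}[X(\tau_1,\theta)]$ by the ULC$\mathcal{E}$ property in the second argument (via \eqref{0}); taking the supremum over $\tau_1\in\mathcal{S}_\theta$ and using the analogue of Proposition \ref{P1}(iii), namely $\sup_{\tau_1\in\mathcal{S}_\theta}\mathcal{E}[X(\tau_1,\theta)]=\mathcal{E}[u_1(\theta)]$, gives the claim.

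The heart of the argument is the reverse inequality $\limsup_n\mathcal{E}[u_1(\theta_n)]\le\mathcal{E}[u_1(\theta)]$, and here the modification enters. For the fixed $\theta$ I would set $X^\theta(\tau_1)=X(\tau_1,\theta)I_{\{\tau_1\ge\theta\}}-I_{\{\tau_1<\theta\}}$ for $\tau_1\in\mathcal{S}_0$. On $\{\tau_1\ge\theta\}$ the variable $X(\tau_1,\theta)$ is $\mathcal{F}_{\tau_1}$-measurable, so $\{X^\theta(\tau_1),\tau_1\in\mathcal{S}_0\}$ is a genuine admissible family, bounded below, and the restriction $\{X(\tau_1,\theta),\tau_1\in\mathcal{S}_\theta\}$ is C$\mathcal{E}$ by the UC$\mathcal{E}$ hypothesis. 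Hence Theorem \ref{t1} applies to $X^\theta$, and its value function $w^\theta(S)=\esssup_{\tau_1\in\mathcal{S}_S}\mathcal{E}_S[X^\theta(\tau_1)]$ is LC$\mathcal{E}$. Two identities make $w^\theta$ the right object: first $w^\theta(\theta)=u_1(\theta)$, since on $\mathcal{S}_\theta$ the penalty is inactive; second, the $-1$ penalty on $\{\tau_1<\theta\}$ forces the essential supremum onto $\mathcal{S}_\theta$ (replacing $\tau_1$ by $\tau_1\vee\theta$ only increases $X^\theta$), so that $w^\theta(\theta_n)=\esssup_{\tau_1\in\mathcal{S}_\theta}\mathcal{E}_{\theta_n}[X(\tau_1,\theta)]$. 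Consequently the LC$\mathcal{E}$ property of $w^\theta$ yields $\lim_n\mathcal{E}[w^\theta(\theta_n)]=\mathcal{E}[u_1(\theta)]$.

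It then remains to compare $u_1(\theta_n)$ with $w^\theta(\theta_n)$. I would map each competitor $\tau_1\in\mathcal{S}_{\theta_n}$ in the definition of $u_1(\theta_n)$ to $\tau_1\vee\theta\in\mathcal{S}_\theta$ and use \eqref{0} to obtain $u_1(\theta_n)\le w^\theta(\theta_n)+\esssup_{\tau_1\in\mathcal{S}_{\theta_n}}\mathcal{E}_{\theta_n}[\,|X(\tau_1,\theta_n)-X(\tau_1\vee\theta,\theta)|\,]$. The error term splits on $\{\tau_1\ge\theta\}$ into $|X(\tau_1,\theta_n)-X(\tau_1,\theta)|$, which is controlled uniformly in $\tau_1$ by ULC$\mathcal{E}$ in the second argument, and on $\{\theta_n\le\tau_1<\theta\}$ into the ``window term'' $|X(\tau_1,\theta)-X(\theta,\theta)|$. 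The main obstacle is precisely to show that this window term tends to $0$ uniformly over competitors $\tau_1$ trapped in the shrinking interval $[\theta_n,\theta)$; it is here, and only here, that the uniform continuity UC$\mathcal{E}$ is indispensable (plain C$\mathcal{E}$, which sufficed for the RC$\mathcal{E}$ result of Theorem \ref{T3}, is not enough). This remapping cannot be avoided by bounding through $U_1(\theta_n,\theta)$ of \eqref{2.7}: as the value function of a non-adapted reward, $U_1(\theta_n,\theta)$ dominates $w^\theta(\theta_n)$ and may overshoot $\mathcal{E}[u_1(\theta)]$ in the limit, which is exactly the failure flagged before the statement. The delicate point in the window estimate is that the trapped stopping times need not approach $\theta$ monotonically, so I would first reduce to the monotone sequence $\theta_n\uparrow\theta$ by a sandwiching/subsequence argument and only then invoke UC$\mathcal{E}$; combining the two error estimates gives $\limsup_n\mathcal{E}[u_1(\theta_n)]\le\mathcal{E}[u_1(\theta)]$ and completes the proof.
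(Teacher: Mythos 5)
Your architecture is the paper's: reduce to $u_1$, penalize by setting $X^\theta(\tau_1)=X(\tau_1,\theta)I_{\{\tau_1\geq\theta\}}-I_{\{\tau_1<\theta\}}$ (the paper's $X'(\tau,\theta)$), apply Theorem \ref{t1} to get an LC$\mathcal{E}$ value function $w^\theta$ with $w^\theta(\theta)=u_1(\theta)$, and then estimate the discrepancy between $u_1(\theta_n)$ and $w^\theta(\theta_n)$. The identity $w^\theta(\theta_n)=\esssup_{\tau_1\in\mathcal{S}_\theta}\mathcal{E}_{\theta_n}[X(\tau_1,\theta)]$, the lower bound $\liminf_n\mathcal{E}[u_1(\theta_n)]\geq\mathcal{E}[u_1(\theta)]$, and the diagnosis of why $U_1(\theta_n,\theta)$ from \eqref{2.7} cannot be used are all correct. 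The genuine gap is the step you yourself call the heart of the argument and then do not carry out: the window term. By comparing $X(\tau_1,\theta_n)$ with $X(\tau_1\vee\theta,\theta)$ you are left, on $\{\theta_n\leq\tau_1<\theta\}$, with $|X(\tau_1,\theta)-X(\theta,\theta)|$, i.e.\ with left-continuity of the family in its \emph{first} argument along competitors squeezed into $[\theta_n,\theta)$. But ULC$\mathcal{E}$ is only assumed along monotone sequences of stopping times. A near-optimal competitor $\tau_1^{(n)}$ trapped in $[\theta_n,\theta)$ converges to $\theta$ pointwise but not monotonically; a monotone subsequence cannot be extracted uniformly in $\omega$; and monotonizing by $\rho_m=\inf_{n\geq m}\tau_1^{(n)}$ replaces each stopping time by a different one, while the reward is merely a compatible family of random variables rather than a process with path regularity, so nothing allows you to compare $X(\rho_m,\theta)$ with $X(\tau_1^{(m)},\theta)$. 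Hence ``reduce to the monotone sequence by a sandwiching/subsequence argument'' is not a proof, and I do not see how to produce one from the stated hypotheses along this route.

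The paper avoids creating this problem: on the window it compares $X(\tau,\theta_n)$ directly with the penalty $-1$, bounding that contribution by $\eta\, I_{\{\theta_n<\theta\}}$ with $\eta=1+\esssup_{\tau,\sigma\in\mathcal{S}_0}X(\tau,\sigma)$ and disposing of it via (H2); no continuity in the first argument is invoked anywhere in the proof of this theorem. If you want to salvage your write-up, you should adopt that comparison, i.e.\ estimate $|X(\tau,\theta_n)-X^\theta(\tau)|\leq|X(\tau,\theta_n)-X(\tau,\theta)|+\eta\, I_{\{\theta_n\leq\tau<\theta\}}$, rather than routing the window through $X(\theta,\theta)$. (Be aware, though, that the residual term $\mathcal{E}[\eta\, I_{\{\theta_n<\theta\}}]$ is itself delicate: (H2) requires $I_{\{\theta_n<\theta\}}\downarrow 0$, which fails when $\theta_n<\theta$ for every $n$ on a set of positive probability, so even the paper's own treatment of the window needs additional care; your instinct that this is where the real difficulty sits is sound, but your proposed resolution does not close it.)
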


    \begin{proof}
    	By the definition of $\widetilde{X}$, it suffices to prove that $\{u_1(\tau),\tau\in\mathcal{S}_0\}$ is LC$\mathcal{E}$.  Let $\{\theta_n\}_{n\in\mathbb{N}}$ be a sequence of stopping times such that $\theta_n\uparrow\theta$. Now we define
    	\begin{displaymath}
    		X'(\tau,\theta)=X(\tau,\theta)I_{\{\tau\geq \theta\}}-I_{\{\tau<\theta\}}.
    	\end{displaymath}
    	It is easy to check that for any $\tau\in\mathcal{S}_0$, $X'(\tau,\theta)$ is $\mathcal{F}_\tau$-measurable and  bounded from below with $\sup_{\tau\in\mathcal{S}_0}\mathcal{E}[|X'(\tau,\theta)|]<\infty$. Therefore, by Theorem \ref{t1}, the value function $\{v'(S),S\in\mathcal{S}_0\}$ defined by
    	\begin{displaymath}
    		v'(S)=\esssup_{\tau\in\mathcal{S}_S}\mathcal{E}_S[X'(\tau,\theta)]
    	\end{displaymath}
    	is LC$\mathcal{E}$. It follows that $\lim_{n\rightarrow\infty}\mathcal{E}[v'(\theta_n)]=\mathcal{E}[v'(\theta)]$. By the definition of $X'$, it is easy to check that
    	\begin{align*}
    		v'(\theta)=\esssup_{\tau\in\mathcal{S}_{\theta}}\mathcal{E}_\theta[X'(\tau,\theta)]=\esssup_{\tau\in\mathcal{S}_{\theta}}\mathcal{E}_\theta[X(\tau,\theta)]=u_1(\theta),
    	\end{align*}
    	which implies that $\lim_{n\rightarrow\infty}\mathcal{E}[v'(\theta_n)]=\mathcal{E}[u_1(\theta)]$. Note that for any $\tau\in\mathcal{S}_{\theta_n}$, we have
    	\begin{align*}
    		|X'(\tau,\theta)-X(\tau,\theta_n)|=&|X(\tau,\theta)I_{\{\tau\geq \theta\}}-I_{\{\theta_n\leq \tau<\theta\}}-X(\tau,\theta_n)|\\
    		=&|X(\tau,\theta)-X(\tau,\theta_n)|I_{\{\tau\geq \theta\}}+|1+X(\tau,\theta_n)|I_{\{\theta_n\leq \tau<\theta\}}\\
    		\leq&|X(\tau,\theta)-X(\tau,\theta_n)|+|1+\esssup_{\tau,\sigma\in\mathcal{S}_0}X(\tau,\sigma)|I_{\{\theta_n<\theta\}}.
    	\end{align*}
    	Set $\eta=1+\esssup_{\tau,\sigma\in\mathcal{S}_0}X(\tau,\sigma)$. By Lemma \ref{L4.1}, we have $\eta\in\textrm{Dom}^+(\mathcal{E})$. By a similar analysis as Lemma \ref{L3}, we obtain that
    	\begin{align*}
    		|\mathcal{E}[v'(\theta_n)]-\mathcal{E}[u_1(\theta_n)]|&\leq \mathcal{E}[\esssup_{\tau\in\mathcal{S}_{\theta_n}}|X'(\tau,\theta)-X(\tau,\theta_n)|]\\
    		&\leq \mathcal{E}[\esssup_{\tau\in\mathcal{S}_{\theta_n}}|X(\tau,\theta)-X(\tau,\theta_n)|]+\mathcal{E}[\eta I_{A_n}]
    	\end{align*}
    	where $A_n=\{\theta_n<\theta\}$. For the first part of the right-hand side, it is easy to check that
    	\begin{displaymath}
    		\mathcal{E}[\esssup_{\tau\in\mathcal{S}_{\theta_n}}|X(\tau,\theta)-X(\tau,\theta_n)|]\leq \sup_{\tau\in\mathcal{S}_0}\mathcal{E}[|X(\tau,\theta)-X(\tau,\theta_n)|]\rightarrow 0, \textrm{ as }n\rightarrow \infty.
    	\end{displaymath}	
    	Noting that $I_{A_n}\downarrow 0$ and $\{A_n\}_{n\in\mathbb{N}}\subset\mathcal{F}_T$, by Assumption (H2), we obtain that $\lim_{n\rightarrow\infty}[\eta I_{A_n}]=0$. Finally, we get that
    	\begin{displaymath}
    		\lim_{n\rightarrow\infty}|\mathcal{E}[u_1(\theta)]-\mathcal{E}[u_1(\theta_n)]|\leq \lim_{n\rightarrow\infty}|\mathcal{E}[u_1(\theta)]-\mathcal{E}[v'(\theta_n)]|+\lim_{n\rightarrow\infty}|\mathcal{E}[v'(\theta_n)]-\mathcal{E}[u_1(\theta_n)]|=0.
    	\end{displaymath}
    	The proof is complete.
    \end{proof}
	
	Now, we can establish the existence of optimal stopping times for the value function defined by \eqref{2.1}.
	
	\begin{theorem}\label{T4}
	Suppose that the $\mathbb{F}$-expectation $(\mathcal{E},\textrm{Dom}(\mathcal{E}))$ satisfies (H0)-(H7)  and the biadmissible family $\{X(\tau,\sigma),\tau,\sigma\in\mathcal{S}_0\}$ is UC${\mathcal{E}}$. Then, there exists a pair of optimal stopping times $(\tau_1^*,\tau_2^*)$ for the value function $v(S)$ defined by \eqref{2.1}.
	\end{theorem}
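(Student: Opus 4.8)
The plan is to reduce the double stopping problem to a single stopping problem by means of Theorem \ref{T2}, which identifies $v(S)$ with the single stopping value function $u(S)$ of \eqref{2.4} associated to the induced reward family $\{\widetilde{X}(\tau),\tau\in\mathcal{S}_0\}$, and then to assemble the optimal pair through the sufficiency part of Proposition \ref{P6}. Since the regularity and existence machinery is already in place, the argument is largely an assembly; the only genuinely delicate point will be the treatment of the auxiliary single stopping problems $u_1(\theta^*)$ and $u_2(\theta^*)$, whose reward families are not admissible.

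First I would verify that $\{\widetilde{X}(\tau),\tau\in\mathcal{S}_0\}$ is a C$\mathcal{E}$ admissible family with $\sup_{\tau\in\mathcal{S}_0}\mathcal{E}[\widetilde{X}(\tau)]<\infty$. Admissibility and the finiteness of the supremum are exactly the content of the lemma following \eqref{2.3}. For right-continuity, observe that since $\mathcal{E}$ satisfies (H6) it is dominated by itself, so I may take $\widetilde{\mathcal{E}}=\mathcal{E}$ in Theorem \ref{T3}; the URC$\mathcal{E}$ half of the UC$\mathcal{E}$ hypothesis then yields that $\{\widetilde{X}(\tau),\tau\in\mathcal{S}_0\}$ is RC$\mathcal{E}$. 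Left-continuity is precisely Theorem \ref{t2}, which applies under (H0)-(H7) and the UC$\mathcal{E}$ assumption. Hence $\{\widetilde{X}(\tau),\tau\in\mathcal{S}_0\}$ is C$\mathcal{E}$, and Theorem \ref{T1} applied to this family produces, for the fixed $S\in\mathcal{S}_0$, an optimal stopping time $\theta^*\in\mathcal{S}_S$ for $u(S)$.

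Next I would produce the two inner optimizers required by Proposition \ref{P6}, namely optimal stopping times for $u_1(\theta^*)$ and $u_2(\theta^*)$, i.e. for single stopping problems in which one coordinate of $X$ is frozen at $\theta^*$. The difficulty is that $\{X(\tau,\theta^*),\tau\in\mathcal{S}_0\}$ and $\{X(\theta^*,\tau),\tau\in\mathcal{S}_0\}$ are not admissible, because $X(\tau,\theta^*)$ is merely $\mathcal{F}_{\theta^*}$-measurable when $\tau<\theta^*$. To circumvent this I would invoke the truncation device of Theorems \ref{t1} and \ref{t2}: set $X'(\tau,\theta^*)=X(\tau,\theta^*)I_{\{\tau\geq\theta^*\}}-I_{\{\tau<\theta^*\}}$, which is admissible, bounded from below, and has finite $\mathcal{E}$-supremum. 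By Theorem \ref{t1} its value function $v'$ admits a minimal optimal stopping time $\tau'(\theta^*)\geq\theta^*$; since $\tau'(\theta^*)\geq\theta^*$ gives $X'(\tau'(\theta^*),\theta^*)=X(\tau'(\theta^*),\theta^*)$ and $v'(\theta^*)=u_1(\theta^*)$, the stopping time $\theta_2^*:=\tau'(\theta^*)$ is optimal for $u_1(\theta^*)$. A symmetric argument applied to $X(\theta^*,\tau)$ yields $\theta_1^*$ optimal for $u_2(\theta^*)$.

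Finally, with $\theta^*$, $\theta_1^*$, $\theta_2^*$ satisfying conditions (i)-(iii) of Proposition \ref{P6}, I would set $\sigma_1^*=\theta^*I_B+\theta_1^*I_{B^c}$ and $\sigma_2^*=\theta_2^*I_B+\theta^*I_{B^c}$ with $B=\{u_1(\theta^*)\leq u_2(\theta^*)\}$, and invoke the sufficiency direction of Proposition \ref{P6} to conclude that $(\tau_1^*,\tau_2^*):=(\sigma_1^*,\sigma_2^*)$ is optimal for $v(S)$. I expect the main obstacle to be the bookkeeping in the third step, namely confirming that the truncated single stopping problem genuinely recovers $u_1(\theta^*)$ and $u_2(\theta^*)$ and that its optimizer lands in the admissible range $\{\tau\geq\theta^*\}$; everything else follows by directly quoting Theorems \ref{T1}, \ref{T2}, \ref{T3}, \ref{t2} and Proposition \ref{P6}.
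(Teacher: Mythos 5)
Your proposal follows essentially the same route as the paper: identify $v(S)$ with $u(S)$ via Theorem \ref{T2}, obtain $\theta^*$ for $u(S)$ from Theorems \ref{T1}, \ref{T3} (taking $\widetilde{\mathcal{E}}=\mathcal{E}$, which is legitimate under (H6)) and \ref{t2}, solve the two frozen single stopping problems, and assemble the pair through Proposition \ref{P6}. Your handling of the inner problems via the truncation $X'(\tau,\theta^*)=X(\tau,\theta^*)I_{\{\tau\geq\theta^*\}}-I_{\{\tau<\theta^*\}}$ and Theorem \ref{t1} is sound and, if anything, slightly more careful about the measurability issue than the paper, which simply applies Theorem \ref{T1} to the families $\{X(\theta,\theta^*),\theta\in\mathcal{S}_{\theta^*}\}$ and $\{X(\theta^*,\theta),\theta\in\mathcal{S}_{\theta^*}\}$ restricted to stopping times after $\theta^*$, where they are genuinely admissible and C$\mathcal{E}$.

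One point needs fixing before the last step goes through. You label the optimizer of $u_1(\theta^*)=\esssup_{\tau}\mathcal{E}_{\theta^*}[X(\tau,\theta^*)]$ as $\theta_2^*$ and the optimizer of $u_2(\theta^*)=\esssup_{\tau}\mathcal{E}_{\theta^*}[X(\theta^*,\tau)]$ as $\theta_1^*$, faithfully matching the literal hypotheses (ii)--(iii) of Proposition \ref{P6}. But those hypotheses are inconsistent with formula \eqref{2.5} and with the displayed computation in the proof of Proposition \ref{P6}, which requires $u_1(\theta^*)=\mathcal{E}_{\theta^*}[X(\theta_1^*,\theta^*)]$ and $u_2(\theta^*)=\mathcal{E}_{\theta^*}[X(\theta^*,\theta_2^*)]$, i.e.\ $\theta_1^*$ must be the first-coordinate optimizer and $\theta_2^*$ the second-coordinate one. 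With your labels, on $B^c=\{u_1(\theta^*)>u_2(\theta^*)\}$ the assembled pair is $(\sigma_1^*,\sigma_2^*)=(\theta_1^*,\theta^*)$ with $\theta_1^*$ a maximizer of the \emph{second}-slot problem sitting in the \emph{first} slot, so $\mathcal{E}_{\theta^*}[X(\sigma_1^*,\sigma_2^*)]$ need not equal $\widetilde{X}(\theta^*)=u_1(\theta^*)$ there, and optimality fails. This is an index typo you inherited from the statement of Proposition \ref{P6} (the paper's own proof of the theorem uses the corrected assignment, via $v_1,v_2$); swapping the names of your two inner optimizers repairs the argument, after which your proof coincides with the paper's.
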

	
	\begin{proof}
		By Theorems \ref{T1}, \ref{T3} and \ref{t2}, there exists an optimal stopping time $\theta^*$ for the value function $u(S)$ defined by \eqref{2.4}. In fact, the smallest one is given by
		\begin{displaymath}
		\theta^*=\essinf\{\theta\in\mathcal{S}_S:u(\theta)=\widetilde{X}(\theta)\}.
		\end{displaymath}
		Furthermore, the admissible families $\{X(\theta,\theta^*),\theta\in\mathcal{S}_{\theta^*}\}$ and $\{X(\theta^*,\theta),\theta\in\mathcal{S}_{\theta^*}\}$ are C$\mathcal{E}$. Let us introduce the following two  optimal single stopping problems:
		\begin{displaymath}
		v_1(S)=\esssup_{\theta\in\mathcal{S}_S}\mathcal{E}_S[X(\theta,\theta^*)], \ v_2(S)=\esssup_{\theta\in\mathcal{S}_S}\mathcal{E}_S[X(\theta^*,\theta)],
		\end{displaymath}
		where $S\in\mathcal{S}_{\theta^*}$. By Theorem \ref{T1} again, the following stopping times
		\begin{displaymath}
		\theta_1^*=\essinf\{\theta\in\mathcal{S}_{\theta^*}:v_1(\theta)=X(\theta,\theta^*)\}, \ \theta_2^*=\essinf\{\theta\in\mathcal{S}_{\theta^*}:v_2(\theta)=X(\theta^*,\theta)\}
		\end{displaymath}
		are optimal for the value function $v_1(\theta^*)$ and $v_2(\theta^*)$, respectively. Consider the following two stopping times
		\begin{displaymath}
		\tau_1^*=\theta^* I_B+\theta_1^* I_{B^c}, \ \tau_2^*=\theta_2^* I_B+\theta^*I_{B^c},
		\end{displaymath}
		where $B=\{v_1(\theta^*)\leq v_2(\theta^*)\}=\{u_1(\theta^*)\leq u_2(\theta^*)\}$ by the definition of $u_1$ and $u_2$ in \eqref{2.2}. By Proposition \ref{P6}, we derive that $(\tau^*_1,\tau_2^*)$ is optimal for $v(S)$.
	\end{proof}

Since $v$ defined by \eqref{2.1} coincides with the value function of the optimal single stopping problem with the reward family $\{\widetilde{X}(\tau),\tau\in\mathcal{S}_0\}$, by Proposition \ref{P3} and \ref{P4}, $\{v(\tau),\tau\in\mathcal{S}_0\}$ is C$\mathcal{E}$ if $\{\widetilde{X}(\tau),\tau\in\mathcal{S}_0\}$  is C$\mathcal{E}$.

\begin{corollary}
	Under the same hypothesis as Theorem \ref{T4}, the family $\{v(\tau),\tau\in\mathcal{S}_0\}$ defined by \eqref{2.1} is C$\mathcal{E}$.
\end{corollary}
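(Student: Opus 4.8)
The plan is to exploit the identification $v=u$ from Theorem \ref{T2}, which says that the double-stopping value function $\{v(S),S\in\mathcal{S}_0\}$ coincides with the single-stopping value function $\{u(S),S\in\mathcal{S}_0\}$ associated with the reward family $\{\widetilde{X}(\tau),\tau\in\mathcal{S}_0\}$ defined in \eqref{2.3}. Recall that this family is admissible and satisfies $\sup_{\tau\in\mathcal{S}_0}\mathcal{E}[\widetilde{X}(\tau)]<\infty$. Hence the whole problem reduces to verifying that $\{\widetilde{X}(\tau),\tau\in\mathcal{S}_0\}$ is C$\mathcal{E}$, after which the regularity of its value function follows from the single-stopping regularity results Proposition \ref{P3} and Proposition \ref{P4}.

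For the RC$\mathcal{E}$ part, I would invoke Theorem \ref{T3} with the auxiliary expectation chosen to be $\mathcal{E}$ itself, that is $\widetilde{\mathcal{E}}=\mathcal{E}$. This is legitimate because $(\mathcal{E},\textrm{Dom}(\mathcal{E}))$ satisfies (H0)--(H7), so in particular (H0)--(H5) hold; moreover, by (H6) and the first item of the Example following the dominance definition, $(\mathcal{E},\textrm{Dom}(\mathcal{E}))$ is dominated by itself; and the UC$\mathcal{E}$ assumption on the biadmissible family is in particular URC$\widetilde{\mathcal{E}}$ with $\widetilde{\mathcal{E}}=\mathcal{E}$. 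Theorem \ref{T3} then yields that $\{\widetilde{X}(\tau),\tau\in\mathcal{S}_0\}$ is RC$\mathcal{E}$. For the LC$\mathcal{E}$ part, Theorem \ref{t2} applies directly, since its hypotheses---namely (H0)--(H7) together with the UC$\mathcal{E}$ property of the biadmissible family---are exactly those of the present statement; this gives that $\{\widetilde{X}(\tau),\tau\in\mathcal{S}_0\}$ is LC$\mathcal{E}$. Combining the two halves, $\{\widetilde{X}(\tau),\tau\in\mathcal{S}_0\}$ is C$\mathcal{E}$.

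Finally, I would conclude as follows. Since $\{\widetilde{X}(\tau),\tau\in\mathcal{S}_0\}$ is an RC$\mathcal{E}$ admissible family with $\sup_{\tau\in\mathcal{S}_0}\mathcal{E}[\widetilde{X}(\tau)]<\infty$, Proposition \ref{P3} shows that its value function $\{u(\tau),\tau\in\mathcal{S}_0\}$ is RC$\mathcal{E}$; and since the family is moreover LC$\mathcal{E}$ and $(\mathcal{E},\textrm{Dom}(\mathcal{E}))$ satisfies (H0)--(H7), Proposition \ref{P4} shows that $\{u(\tau),\tau\in\mathcal{S}_0\}$ is LC$\mathcal{E}$. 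By Theorem \ref{T2}, $u(\tau)=v(\tau)$ for every $\tau\in\mathcal{S}_0$, so $\{v(\tau),\tau\in\mathcal{S}_0\}$ is C$\mathcal{E}$. The only delicate point---and the main obstacle, already resolved by the preceding theorems---is the LC$\mathcal{E}$ regularity of $\widetilde{X}$, whose proof in Theorem \ref{t2} required modifying the non-admissible reward $\{X(\tau,\theta),\tau\in\mathcal{S}_0\}$ into an admissible family and appealing to Theorem \ref{t1}; once Theorem \ref{t2} is in hand, the present corollary is a straightforward assembly of the single-stopping regularity propositions.
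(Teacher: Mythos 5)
Your proposal is correct and follows essentially the same route as the paper: the paper's justification is precisely the sentence preceding the corollary, namely that $v=u$ by Theorem \ref{T2} and that Propositions \ref{P3} and \ref{P4} transfer the C$\mathcal{E}$ property of $\{\widetilde{X}(\tau),\tau\in\mathcal{S}_0\}$ (supplied by Theorems \ref{T3} and \ref{t2}) to the value function. Your explicit remark that Theorem \ref{T3} is applied with $\widetilde{\mathcal{E}}=\mathcal{E}$, legitimized by (H6) and the self-dominance example, merely spells out what the paper leaves implicit.
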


\begin{remark}
	By Proposition \ref{P3}, the RC$\mathcal{E}$ property of $\{v(\tau),\tau\in\mathcal{S}_0\}$ does not depend on the existence of optimal stopping times. Thus, the conditions can be weaken as the one in Theorem \ref{T3} to guarantee the RC$\mathcal{E}$ property  of $\{v(\tau),\tau\in\mathcal{S}_0\}$.
\end{remark}

	\section{The optimal $d$-stopping time problem under nonlinear expectation}

	As in Section 3, we assume that the $\mathbb{F}$-expectation $(\mathcal{E},\textrm{Dom}(\mathcal{E}))$ satifies  Assumptions (H0)-(H5). Now we introduce the optimal $d$-stopping times problem. The reward family should satisfy the following conditions.
	\begin{definition}
		A family of random variables $\{X(\tau),\tau\in\mathcal{S}_0^d\}$ is said to be $d$-admissible if it satisfies the following conditions:
		\begin{description}
			\item[(1)] for all $\tau=(\tau_1,\cdots,\tau_d)\in\mathcal{S}_0^d$, $X(\tau)\in \textrm{Dom}^+_{\tau_1\vee\cdots\tau_d}(\mathcal{E})$;
			\item[(2)] for all $\tau,\sigma\in\mathcal{S}_0^d$, $X(\tau)=X(\sigma)$ a.s. on $\{\tau=\sigma\}$.
		\end{description}
	\end{definition}
	
	For each fixed stopping time $S\in\mathcal{S}_0$, the value function of the optimal $d$-stopping time problem associated with reward family $\{X(\tau),\tau\in\mathcal{S}_0^d\}$ is given by
	\begin{equation}\label{3.1}
	v(S)=\esssup_{\tau\in\mathcal{S}_S^d}\mathcal{E}_S[X(\tau)]=\esssup\{\mathcal{E}_S[X(\tau_1,\cdots,\tau_d)],\tau_1,\cdots,\tau_d\in\mathcal{S}_S\}.
	\end{equation}
	Similar with the optimal double stopping time case, the family $\{v(S),S\in\mathcal{S}_0\}$ is admissible and is an $\mathcal{E}$-supermartingale system as the following proposition shows.
	
	\begin{proposition}\label{P3.1}
		Let $\{X(\tau),\tau\in\mathcal{S}_0^d\}$ be a $d$-admissible family of random variables with $\sup_{\tau\in\mathcal{S}_0^d}\mathcal[X(\tau)]<\infty$. Then, the value function $\{v(S),S\in\mathcal{S}_0\}$ defined by \eqref{3.1} satisfies the following properties:
		\begin{description}
			\item[(i)] $\{v(S),S\in\mathcal{S}_0\}$ is an admissible family;
			\item[(ii)] for each $S\in\mathcal{S}_0$, there exists a sequence of  stopping times $\{\tau^n\}_{n\in\mathbb{N}}\subset \mathcal{S}_S^d$ such that $\mathcal{E}_S[X(\tau^n)]$ converges monotonically up to $v(S)$;
			\item[(iii)] $\{v(S),S\in\mathcal{S}_0\}$ is an $\mathcal{E}$-supermartingale system;
			\item[(iv)] for each $S\in\mathcal{S}_0$, we have $\mathcal{E}[v(S)]=\sup_{\tau\in\mathcal{S}_S^d}\mathcal{E}[X(\tau)]$.
		\end{description}
	\end{proposition}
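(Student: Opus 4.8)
The plan is to mirror the double-stopping argument of Proposition \ref{P5}, generalizing each pasting construction from pairs to $d$-tuples of stopping times. I would begin with assertion (ii), since the monotone approximating sequence it provides is needed both for the domain membership in (i) and for the supermartingale estimate in (iii). To obtain (ii) it suffices to show that the family $\{\mathcal{E}_S[X(\tau)],\tau\in\mathcal{S}_S^d\}$ is upward directed: given $\tau=(\tau_1,\cdots,\tau_d)$ and $\sigma=(\sigma_1,\cdots,\sigma_d)$ in $\mathcal{S}_S^d$, I would set $B=\{\mathcal{E}_S[X(\tau)]\geq \mathcal{E}_S[X(\sigma)]\}\in\mathcal{F}_S$ and define the componentwise paste $\theta_i=\tau_i I_B+\sigma_i I_{B^c}$. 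Then $\theta=(\theta_1,\cdots,\theta_d)\in\mathcal{S}_S^d$, and using the local property of the $\mathbb{F}$-expectation one checks that $\mathcal{E}_S[X(\theta)]=\mathcal{E}_S[X(\tau)]\vee\mathcal{E}_S[X(\sigma)]$; the standard lattice argument then yields a sequence $\{\tau^n\}_{n\in\mathbb{N}}\subset\mathcal{S}_S^d$ with $\mathcal{E}_S[X(\tau^n)]\uparrow v(S)$.

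For (i), the $\mathcal{F}_S$-measurability and nonnegativity of $v(S)$ are immediate from the definition \eqref{3.1}. Membership $v(S)\in\textrm{Dom}^+(\mathcal{E})$ follows by applying Lemma \ref{L4.1} to $\Xi=\{\mathcal{E}_S[X(\tau)],\tau\in\mathcal{S}_S^d\}$, whose $\mathcal{E}$-expectations are bounded by $\sup_{\tau\in\mathcal{S}_0^d}\mathcal{E}[X(\tau)]<\infty$. The consistency condition is the step requiring care: fixing $\tau,\sigma\in\mathcal{S}_0$ and $A=\{\tau=\sigma\}\in\mathcal{F}_\tau\cap\mathcal{F}_\sigma$, for any $d$-tuple $\theta=(\theta_1,\cdots,\theta_d)\in\mathcal{S}_\tau^d$ I would set $\theta_i^A=\theta_i I_A+T I_{A^c}$, so that $\theta^A=(\theta_1^A,\cdots,\theta_d^A)\in\mathcal{S}_\sigma^d$ and the $d$-admissibility gives $X(\theta)I_A=X(\theta^A)I_A$. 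The chain of identities $\mathcal{E}_\tau[X(\theta)]I_A=\mathcal{E}_\tau[X(\theta^A)I_A]=\mathcal{E}_\tau[X(\theta^A)]I_A=\mathcal{E}_\sigma[X(\theta^A)]I_A\leq v(\sigma)I_A$, in which the local property and $A\in\mathcal{F}_\tau\cap\mathcal{F}_\sigma$ are used, then produces $v(\tau)\leq v(\sigma)$ on $A$ after taking the essential supremum over all such $d$-tuples; symmetry gives the reverse inequality and hence equality on $A$.

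Assertions (iii) and (iv) are then routine. For (iii), I would take the approximating sequence $\{\tau^n\}$ from (ii) and, for $S\geq\sigma$ in $\mathcal{S}_0$, apply Fatou's lemma (Proposition \ref{P2.8}) together with time consistency to get $\mathcal{E}_\sigma[v(S)]\leq\liminf_{n\rightarrow\infty}\mathcal{E}_\sigma[\mathcal{E}_S[X(\tau^n)]]=\liminf_{n\rightarrow\infty}\mathcal{E}_\sigma[X(\tau^n)]\leq v(\sigma)$. Specializing to $\sigma=0$ gives the inequality $\mathcal{E}[v(S)]\leq\sup_{\tau\in\mathcal{S}_S^d}\mathcal{E}[X(\tau)]$ of (iv), while the reverse inequality is immediate from $v(S)\geq\mathcal{E}_S[X(\tau)]$ for every $\tau\in\mathcal{S}_S^d$. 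The only genuine obstacle here is bookkeeping: verifying that each componentwise paste again lies in $\mathcal{S}_S^d$ (resp. $\mathcal{S}_\sigma^d$) and that the local property is invoked correctly for $d$-tuples rather than pairs. No new analytic ingredient beyond the double-stopping case is required, so I expect the argument to transfer with essentially the same estimates.
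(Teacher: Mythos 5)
Your proposal is correct and follows essentially the same route as the paper, which simply states that the proof is analogous to Proposition \ref{P5} and omits it; your componentwise pasting for upward directedness, the $\theta_i^A=\theta_i I_A+TI_{A^c}$ construction for consistency on $\{\tau=\sigma\}$, and the Fatou/time-consistency argument for the supermartingale property are exactly the intended generalizations. The only cosmetic difference is that you invoke Lemma \ref{L4.1} for $v(S)\in\textrm{Dom}^+(\mathcal{E})$ where the paper's Proposition \ref{P5} applies (H5) directly to the monotone approximating sequence, but these are the same argument.
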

	
	\begin{proof}
		The proof is similar with the one of Proposition \ref{P5}. We omit it.
	\end{proof}

	In the following, we will interpret the value function $v(S)$ defined in \eqref{3.1} as the value function of an optimal single stopping problem associated with a new reward family. For this purpose, for each $i=1,\cdots,d$ and $\theta\in\mathcal{S}_0$, consider the following random variable
	\begin{equation}\label{3.2}
	u^{(i)}(\theta)=\esssup_{\tau\in\mathcal{S}_\theta^{d-1}}\mathcal{E}_\theta[X^{(i)}(\tau,\theta)],
	\end{equation}
	where
	\begin{equation}\label{3.5}
	X^{(i)}(\tau_1,\cdots,\tau_{d-1},\theta)=X(\tau_1,\cdots,\tau_{i-1},\theta,\tau_{i+1},\cdots,\tau_{d-1}).
	\end{equation}
	It is easy to see that $u^{(i)}(\theta)$ is the value function of the optimal $(d-1)$-stopping problem corresponding to the reward $\{X^{(i)}(\tau,\theta),\tau\in\mathcal{S}_\theta^{d-1}\}$. Now we define
	\begin{equation}\label{3.3}
	\widehat{X}(\theta)=\max\{u^{(1)}(\theta),\cdots,u^{(d)}(\theta)\},
	\end{equation}
	and
	\begin{equation}\label{3.4}
	u(S)=\esssup_{\tau\in\mathcal{S}_S}\mathcal{E}_S[\widehat{X}(\tau)].
	\end{equation}
	The following theorem indicates that the value function $v$ defined by \eqref{3.1} coincides with $u$.
	
	\begin{theorem}\label{T5}
		Let $\{X(\tau),\tau\in\mathcal{S}_0^d\}$ be a $d$-admissible family with $\sup_{\tau\in\mathcal{S}_0^d}\mathcal{E}[X(\tau)]<\infty$. Then, for any $S\in\mathcal{S}_0$, we have $v(S)=u(S)$.
	\end{theorem}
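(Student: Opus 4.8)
The plan is to mirror the proof of Theorem \ref{T2}, establishing the two inequalities $v(S)\leq u(S)$ and $v(S)\geq u(S)$ separately, with the binary split $\{\tau_1\leq\tau_2\}$ versus its complement used for $d=2$ replaced by a partition of $\Omega$ according to which coordinate of $\tau$ realizes the minimum.

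For the inequality $v(S)\leq u(S)$, I would fix $S\in\mathcal{S}_0$ and an arbitrary $\tau=(\tau_1,\ldots,\tau_d)\in\mathcal{S}_S^d$, write $\underline{\tau}=\tau_1\wedge\cdots\wedge\tau_d$, and introduce the disjoint events
\begin{displaymath}
A_i=\{\tau_i=\underline{\tau}\}\setminus\bigcup_{k<i}\{\tau_k=\underline{\tau}\},\qquad i=1,\ldots,d,
\end{displaymath}
which break ties by the smallest index and satisfy $A_i\in\mathcal{F}_{\underline{\tau}}$ and $\bigcup_iA_i=\Omega$. On $A_i$ one has $\underline{\tau}=\tau_i$, so by admissibility condition (2) together with \eqref{3.5}, $X(\tau)I_{A_i}=X^{(i)}(\tau_{-i},\underline{\tau})I_{A_i}$, where $\tau_{-i}$ collects the remaining coordinates; moreover each $\tau_j\geq\underline{\tau}$, hence $\tau_{-i}\in\mathcal{S}_{\underline{\tau}}^{d-1}$. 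Using the local property and the definition \eqref{3.2} of $u^{(i)}$ I would obtain
\begin{displaymath}
\mathcal{E}_{\underline{\tau}}[X(\tau)]I_{A_i}=\mathcal{E}_{\underline{\tau}}[X^{(i)}(\tau_{-i},\underline{\tau})]I_{A_i}\leq u^{(i)}(\underline{\tau})I_{A_i}\leq\widehat{X}(\underline{\tau})I_{A_i}.
\end{displaymath}
Summing over $i$ gives $\mathcal{E}_{\underline{\tau}}[X(\tau)]\leq\widehat{X}(\underline{\tau})$, and time consistency then yields $\mathcal{E}_S[X(\tau)]=\mathcal{E}_S[\mathcal{E}_{\underline{\tau}}[X(\tau)]]\leq\mathcal{E}_S[\widehat{X}(\underline{\tau})]\leq u(S)$. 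Taking the essential supremum over $\tau\in\mathcal{S}_S^d$ delivers $v(S)\leq u(S)$.

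For the reverse inequality, I would first check that $v(S)\geq\widehat{X}(S)$: for each $i$ and any $\rho\in\mathcal{S}_S^{d-1}$, inserting $S$ into the $i$-th slot produces a member of $\mathcal{S}_S^d$, so $\mathcal{E}_S[X^{(i)}(\rho,S)]\leq v(S)$; taking the essential supremum over $\rho$ gives $u^{(i)}(S)\leq v(S)$ for every $i$, whence $\widehat{X}(S)=\max_i u^{(i)}(S)\leq v(S)$. By Proposition \ref{P3.1}, $\{v(\tau),\tau\in\mathcal{S}_0\}$ is an $\mathcal{E}$-supermartingale system dominating $\{\widehat{X}(\tau),\tau\in\mathcal{S}_0\}$; since Proposition \ref{P1} (applied to the admissible family $\widehat{X}$, whose admissibility and integrability follow as in the double-stopping case) identifies $u$ as the smallest such system, we conclude $v(S)\geq u(S)$, completing the argument.

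The main obstacle lies entirely in the $\leq$ direction: one must set up the partition $\{A_i\}$ so that it is simultaneously $\mathcal{F}_{\underline{\tau}}$-measurable and compatible with the relabeling \eqref{3.5}, and then invoke the local property to pass from the pathwise identity $X(\tau)I_{A_i}=X^{(i)}(\tau_{-i},\underline{\tau})I_{A_i}$ to the conditional estimate. The tie-breaking by smallest index is exactly what guarantees a genuine disjoint partition summing to the identity; once this bookkeeping is in place, the remaining steps are the same formal manipulations (local property, time consistency, supermartingale minimality) already carried out for $d=2$ in Theorem \ref{T2}.
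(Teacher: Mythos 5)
Your proposal is correct and follows essentially the same route as the paper: the $\geq$ direction via domination of $\widehat{X}$ plus minimality of $u$ as an $\mathcal{E}$-supermartingale system, and the $\leq$ direction via an $\mathcal{F}_{\underline{\tau}}$-measurable partition according to which coordinate attains the minimum, the local property, and time consistency. The only difference is cosmetic: you spell out the tie-breaking construction of the partition $\{A_i\}$, which the paper merely asserts exists.
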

	
	\begin{proof}
		By the definition of $v$ and $u^{(i)}$, it is obvious that $v(S)\geq u^{(i)}(S)$, for any $i=1,\cdots,d$ and $S\in\mathcal{S}_0$. Therefore, we have $v(S)\geq \widehat{X}(S)$, for any $S\in\mathcal{S}_0$. By Propositions \ref{P1} and \ref{P3.1}, $\{v(S),S\in\mathcal{S}_0\}$ is an $\mathcal{E}$-supermartingale system which dominates $\{\widehat{X}(S),S\in\mathcal{S}_0\}$ while $\{u(S),S\in\mathcal{S}_0\}$ is the smallest one which does so. It follows that $v(S)\geq u(S)$.
		
		It remains to show the reverse inequality. For each fixed $S\in\mathcal{S}_0$, consider the multiple stopping time $\tau=(\tau_1,\cdots,\tau_d)\in\mathcal{S}_S^d$. There exists a disjoint partition $\{A_i\}_{i=1}^d$ of $\Omega$ such that $\tau_1\wedge\cdots\wedge\tau_d=\tau_i$ on $A_i$ and $A_i$ belongs to $\mathcal{F}_{\tau_1\wedge\cdots\wedge\tau_d}$ for $i=1,\cdots,d$. It is easy to check that
		\begin{displaymath}
		\mathcal{E}_{\tau_i}[X(\tau)]I_{A_i}\leq u^{(i)}(\tau_i)I_{A_i}\leq \widehat{X}(\tau_i)I_{A_i}=\widehat{X}(\tau_1\wedge\cdots\wedge\tau_d)I_{A_i}.
		\end{displaymath}
		By simple calculation, we obtain that
		\begin{displaymath}
		\mathcal{E}_{S}[X(\tau)]=\mathcal{E}_S[\sum_{i=1}^d\mathcal{E}_{\tau_1\wedge\cdots\wedge\tau_d}[X(\tau)]I_{A_i}]=\mathcal{E}_S[\sum_{i=1}^d\mathcal{E}_{\tau_i}[X(\tau)]I_{A_i}]\leq \mathcal{E}_S[\widehat{X}(\tau_1\wedge\cdots\wedge\tau_d)]\leq u(S).
		\end{displaymath}
		Taking supremum over all $\tau\in\mathcal{S}_S^d$ yields that $v(S)\leq u(S)$. The proof is complete.
	\end{proof}
	
	With the above characterization of the value function, we may propose a possible construction of the optimal multiple stopping times by induction..
	\begin{proposition}\label{P3.3}
		For any fixed $S\in\mathcal{S}_0$, suppose that
		\begin{description}
			\item[1.] there exists $\theta^*\in\mathcal{S}_S$ such that $u(S)=\mathcal{E}_S[\widehat{X}(\theta^*)]$;
			\item[2.] for any $i=1,\cdots,d$, there exists $\theta^{(i)*}=(\theta_1^{(i)*},\cdots,\theta_{i-1}^{(i)*},\theta_{i+1}^{(i)*},\cdots,\theta_d^{(i)*})\in\mathcal{S}_{\theta^*}^{d-1}$ such that $u^{(i)}(\theta^*)=\mathcal{E}_{\theta^*}[X^{(i)}(\theta^{(i)*},\theta^*)]$.
		\end{description}
		Let $\{B_i\}_{i=1}^d$ be an $\mathcal{F}_{\theta^*}$-measurable and disjoint partition of $\Omega$ such that $\widehat{X}(\theta^*)=u^{(i)}(\theta^*)$ on the set $B_i$, $i=1,\cdots,d$. Set
		\begin{equation}\label{3.7}
		\tau_j^*=\theta^* I_{B_j}+\sum_{i\neq j,i=1}^d \theta^{(i)*}_j I_{B_i}.
		\end{equation}
		Then, $\tau^*=(\tau_1^*,\cdots,\tau_d^*)$ is optimal for $v(S)$, and $\tau_1^*\wedge\cdots\wedge\tau_d^*=\theta^*$.
	\end{proposition}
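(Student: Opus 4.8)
The plan is to establish optimality by directly computing $\mathcal{E}_S[X(\tau^*)]$ and showing that it equals $u(S)$, which by Theorem \ref{T5} is exactly $v(S)$. First I would check that $\tau^*$ genuinely belongs to $\mathcal{S}_S^d$: each coordinate $\tau_j^*$ is obtained by gluing $\theta^*$ and the components $\theta_j^{(i)*}$ along the $\mathcal{F}_{\theta^*}$-measurable partition $\{B_i\}_{i=1}^d$, and since $\theta^*\in\mathcal{S}_S$ and every $\theta^{(i)*}\in\mathcal{S}_{\theta^*}^{d-1}$, the glued random variable is a stopping time taking values between $S$ and $T$. I would record at this stage the identity $\tau_1^*\wedge\cdots\wedge\tau_d^*=\theta^*$: on each $B_i$ one has $\tau_i^*=\theta^*$ while $\tau_j^*=\theta_j^{(i)*}\geq\theta^*$ for $j\neq i$, so the minimum is attained at the $i$-th coordinate and equals $\theta^*$; since the $B_i$ cover $\Omega$, this holds a.s.

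The heart of the argument is a pointwise identification on each block of the partition. On $B_i$ the stopping vector $\tau^*$ has $i$-th coordinate $\theta^*$ and its remaining coordinates equal to those of $\theta^{(i)*}$, so by the definition \eqref{3.5} of $X^{(i)}$ and the compatibility property (2) of the $d$-admissible family we get $X(\tau^*)=X^{(i)}(\theta^{(i)*},\theta^*)$ a.s.\ on $B_i$, hence $X(\tau^*)I_{B_i}=X^{(i)}(\theta^{(i)*},\theta^*)I_{B_i}$. Applying the local property of $\mathcal{E}_{\theta^*}$ together with hypothesis 2 and the fact that $\widehat{X}(\theta^*)=u^{(i)}(\theta^*)$ on $B_i$, I would compute
\begin{align*}
\mathcal{E}_{\theta^*}[X(\tau^*)]I_{B_i}=\mathcal{E}_{\theta^*}[X(\tau^*)I_{B_i}]=\mathcal{E}_{\theta^*}[X^{(i)}(\theta^{(i)*},\theta^*)]I_{B_i}=u^{(i)}(\theta^*)I_{B_i}=\widehat{X}(\theta^*)I_{B_i}.
\end{align*}
Summing over the disjoint partition $\{B_i\}_{i=1}^d$ then yields $\mathcal{E}_{\theta^*}[X(\tau^*)]=\widehat{X}(\theta^*)$.

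Finally, I would invoke time consistency (the tower property) at $S\leq\theta^*$ to conclude
\begin{align*}
\mathcal{E}_S[X(\tau^*)]=\mathcal{E}_S[\mathcal{E}_{\theta^*}[X(\tau^*)]]=\mathcal{E}_S[\widehat{X}(\theta^*)]=u(S)=v(S),
\end{align*}
where the third equality is hypothesis 1 and the last is Theorem \ref{T5}. This exhibits $\tau^*$ as optimal for $v(S)$. The step I expect to demand the most care is the pointwise identification $X(\tau^*)=X^{(i)}(\theta^{(i)*},\theta^*)$ on $B_i$ and its propagation through $\mathcal{E}_{\theta^*}$: one must apply the compatibility property of the reward family correctly, noting that the coordinates of $\tau^*$ coincide with those of the $i$-th reconstructed vector precisely on $B_i$, and then use the local property so that the $\mathcal{F}_{\theta^*}$-measurable indicators pass in and out of the conditional nonlinear expectation. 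Everything else is a routine assembly of the tower property and the two optimality hypotheses.
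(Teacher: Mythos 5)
Your proposal is correct and follows essentially the same route as the paper: both proofs rest on the identification $X(\tau^*)I_{B_i}=X^{(i)}(\theta^{(i)*},\theta^*)I_{B_i}$ via the compatibility property, the local property of $\mathcal{E}_{\theta^*}$ to assemble $\mathcal{E}_{\theta^*}[X(\tau^*)]=\widehat{X}(\theta^*)$, and then time consistency together with Theorem \ref{T5}. The only difference is cosmetic --- you compute from $X(\tau^*)$ upward while the paper expands $\widehat{X}(\theta^*)$ downward --- and your extra care with the block-by-block identification and the verification that $\tau_1^*\wedge\cdots\wedge\tau_d^*=\theta^*$ simply fills in what the paper labels ``easy to check.''
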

	
	\begin{proof}
		It is easy to check that $\tau^*\in\mathcal{S}_S^d$ and $\tau_1^*\wedge\cdots\wedge\tau_d^*=\theta^*$. By simple calculation, we obtain that
		\begin{align*}
		v(S)&=u(S)=\mathcal{E}_S[\widehat{X}(\theta^*)]=\mathcal{E}_S[\sum_{i=1}^d u^{(i)}(\theta^*)I_{B_i}]=\mathcal{E}_S[\sum_{i=1}^d \mathcal{E}_{\theta^*}[X^{(i)}(\theta^{(i)*},\theta^*)]I_{B_i}]\\
		&=\mathcal{E}_S[ \mathcal{E}_{\theta^*}[\sum_{i=1}^dX^{(i)}(\theta^{(i)*},\theta^*)I_{B_i}]]=\mathcal{E}_S[X(\tau^*)],
		\end{align*}
		which implies the optimality of $\tau^*$.
	\end{proof}

    \begin{proposition}
    	For any fixed $S\in\mathcal{S}_0$, suppose that $\tau^*=(\tau_1^*,\cdots,\tau_d^*)$ is optimal for $v(S)$. Then, we have
    	\begin{description}
    		\item[(1)] $\tau_1^*\wedge\cdots\wedge \tau_d^*$ is optimal for $u(S)$;
    		\item[(2)] for any $i=1,\cdots,d$, $(\tau_1^*,\cdots,\tau_{i-1}^*,\tau^*_{i+1},\cdots,\tau_d^*)$ is optimal for $u^{(i)}(\tau_i^*)$ on the set $\{\tau_1^*\wedge\cdots\wedge \tau_d^*=\tau_i^*\}$.
    	\end{description}
    \end{proposition}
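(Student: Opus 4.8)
The plan is to mimic the necessary-condition part of Proposition~\ref{P6}, reading off equalities directly from the proof of Theorem~\ref{T5}. Write $\theta^*=\tau_1^*\wedge\cdots\wedge\tau_d^*$ and let $\{A_i\}_{i=1}^d$ be the $\mathcal{F}_{\theta^*}$-measurable disjoint partition with $\theta^*=\tau_i^*$ on $A_i$ used in that proof. Since $\tau^*$ is optimal, $v(S)=\mathcal{E}_S[X(\tau^*)]$, and Theorem~\ref{T5} gives $v(S)=u(S)$; hence the whole chain of inequalities in the proof of Theorem~\ref{T5}, with $\tau$ replaced by $\tau^*$, collapses to equalities. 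In particular $\mathcal{E}_S[\widehat{X}(\theta^*)]=u(S)$, which by the optimality characterization (Proposition~\ref{P2} applied to the reward family $\{\widehat{X}(\tau),\tau\in\mathcal{S}_0\}$ and its value function $u$) is exactly assertion (1).

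Next I would upgrade this equality of $\mathcal{E}_S$-values to a pointwise identity at time $\theta^*$. Using the local property on each $A_i$ one has $\mathcal{E}_{\theta^*}[X(\tau^*)]=\sum_{i}\mathcal{E}_{\tau_i^*}[X(\tau^*)]I_{A_i}\leq\sum_i u^{(i)}(\tau_i^*)I_{A_i}\leq\widehat{X}(\theta^*)$ a.s. On the other hand, time consistency gives $\mathcal{E}_S[\mathcal{E}_{\theta^*}[X(\tau^*)]]=\mathcal{E}_S[X(\tau^*)]=\mathcal{E}_S[\widehat{X}(\theta^*)]$; applying $\mathcal{E}[\cdot]$ and the tower property yields $\mathcal{E}[\mathcal{E}_{\theta^*}[X(\tau^*)]]=\mathcal{E}[\widehat{X}(\theta^*)]$. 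Since $\mathcal{E}_{\theta^*}[X(\tau^*)]\leq\widehat{X}(\theta^*)$ a.s., strict monotonicity of the $\mathbb{F}$-expectation forces
\begin{displaymath}
\mathcal{E}_{\theta^*}[X(\tau^*)]=\widehat{X}(\theta^*)\quad\text{a.s.}
\end{displaymath}

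Finally I would localize this identity to obtain (2). Fix $i$ and put $C_i=\{\theta^*=\tau_i^*\}\in\mathcal{F}_{\theta^*}$. On $C_i$ the conditioning times $\theta^*$ and $\tau_i^*$ coincide, so $\mathcal{E}_{\tau_i^*}[X(\tau^*)]I_{C_i}=\widehat{X}(\theta^*)I_{C_i}=\widehat{X}(\tau_i^*)I_{C_i}\geq u^{(i)}(\tau_i^*)I_{C_i}$. For the reverse inequality I would feed the tuple $(\tau_1^*,\dots,\tau_{i-1}^*,\tau_{i+1}^*,\dots,\tau_d^*)$ into the definition \eqref{3.2} of $u^{(i)}(\tau_i^*)$; since its entries are only guaranteed to dominate $\tau_i^*$ on $C_i$, I first replace each $\tau_j^*$ ($j\neq i$) by $\tau_j^* I_{C_i}+TI_{C_i^c}$ to produce a genuine element of $\mathcal{S}_{\tau_i^*}^{d-1}$, and then use the compatibility of $X$ together with the local property to get $u^{(i)}(\tau_i^*)I_{C_i}\geq\mathcal{E}_{\tau_i^*}[X(\tau^*)]I_{C_i}$. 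Combining the two bounds gives $u^{(i)}(\tau_i^*)=\mathcal{E}_{\tau_i^*}[X^{(i)}((\tau_1^*,\dots,\tau_{i-1}^*,\tau_{i+1}^*,\dots,\tau_d^*),\tau_i^*)]$ on $C_i$, which is assertion (2). The main obstacle is precisely this last localization step: because the sets $\{\theta^*=\tau_i^*\}$ overlap wherever the minimizing index is not unique, the tuple $(\tau_j^*)_{j\neq i}$ fails to be an admissible $(d-1)$-family off $C_i$, so the local property must be invoked carefully to splice it in only on $C_i$ without altering the conditional expectation there.
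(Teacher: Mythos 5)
Your proposal is correct and follows essentially the same route as the paper: the paper's proof simply substitutes $\tau^*$ into the chain of inequalities from the proof of Theorem \ref{T5} and observes that they all become equalities. You additionally make explicit two steps the paper leaves implicit --- upgrading the equality of $\mathcal{E}_S$-expectations to the a.s.\ identity $\mathcal{E}_{\theta^*}[X(\tau^*)]=\widehat{X}(\theta^*)$ via strict monotonicity, and the splicing argument with $\tau_j^* I_{C_i}+TI_{C_i^c}$ needed to pass from the disjoint partition $\{A_i\}$ to the full (possibly overlapping) sets $\{\tau_1^*\wedge\cdots\wedge\tau_d^*=\tau_i^*\}$ --- both of which are sound and genuinely needed for a complete argument.
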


    \begin{proof}
    	When we replace $\tau=(\tau_1,\cdots,\tau_d)$ by $\tau^*=(\tau_1^*,\cdots,\tau_d^*)$ in the proof of Theorem \ref{T5}, all the inequalities turn into equalities. The proof is complete.
    \end{proof}

    \begin{remark}
    	All the above results in this section do not need any regularity assumption on the reward family $\{X(\tau),\tau\in\mathcal{S}_0^d\}$.
    \end{remark}

    	The definition of continuity for the reward with $d$-parameters is similar with the one for the double stopping case.
    \begin{definition}
    	A $d$-admissible family $\{X(\tau),\tau\in\mathcal{S}_0^d\}$ is said to be right-continuous (resp. left-continuous) along stopping times in $\mathcal{E}$-expectation [RC$\mathcal{E}$ (resp., LC$\mathcal{E}$)] if, for any $\tau\in\mathcal{S}_0^d$ and any sequence $\{\tau_n\}_{n\in\mathbb{N}}\subset \mathcal{S}_0^d$ such that $\tau_n\downarrow \tau$ (resp., $\tau_n\uparrow \tau$), one has $\mathcal{E}[X(\tau)]=\lim_{n\rightarrow\infty}\mathcal{E}[X(\tau_n)]$. If the family $\{X(\tau),\tau\in\mathcal{S}_0^d\}$ is both RC$\mathcal{E}$ and LC$\mathcal{E}$, it is said to be continuous along stopping times in $\mathcal{E}$-expectation (C$\mathcal{E}$).
    \end{definition}

    \begin{proposition}\label{P3.2}
    	Let $\{X(\tau),\tau\in\mathcal{S}_0^d\}$ be an RC$\mathcal{E}$ $d$-admissible family with $\sup_{\tau\in\mathcal{S}_0^d}\mathcal{E}[X(\tau)]<\infty$. Then, the family $\{v(S),S\in\mathcal{S}_0\}$ is RC$\mathcal{E}$.
    \end{proposition}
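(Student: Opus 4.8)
The plan is to adapt the contradiction argument from the proof of Proposition \ref{P3} to the $d$-parameter setting, using the supermartingale property and the value identity established in Proposition \ref{P3.1}. Suppose, for contradiction, that $\{v(S),S\in\mathcal{S}_0\}$ fails to be RC$\mathcal{E}$ at some $S\in\mathcal{S}_0$. Since $\{v(S),S\in\mathcal{S}_0\}$ is an $\mathcal{E}$-supermartingale system by Proposition \ref{P3.1}(iii), the map $\tau\mapsto\mathcal{E}[v(\tau)]$ is nonincreasing along stopping times, so there exist $\varepsilon>0$ and a sequence $\{\tau_n\}_{n\in\mathbb{N}}$ with $\tau_n\downarrow S$ such that $\sup_{n}\mathcal{E}[v(\tau_n)]+\varepsilon\leq\mathcal{E}[v(S)]$.

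Next I would invoke Proposition \ref{P3.1}(iv), which gives $\mathcal{E}[v(S)]=\sup_{\tau\in\mathcal{S}_S^d}\mathcal{E}[X(\tau)]$, to select a $d$-tuple $\sigma=(\sigma_1,\dots,\sigma_d)\in\mathcal{S}_S^d$ with $\sup_{n}\sup_{\tau\in\mathcal{S}_{\tau_n}^d}\mathcal{E}[X(\tau)]+\frac{\varepsilon}{2}\leq\mathcal{E}[X(\sigma)]$, where I use the same identity applied at each $\tau_n$, namely $\mathcal{E}[v(\tau_n)]=\sup_{\tau\in\mathcal{S}_{\tau_n}^d}\mathcal{E}[X(\tau)]$. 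The key elementary observation for the vector case is that $\sigma\vee\tau_n:=(\sigma_1\vee\tau_n,\dots,\sigma_d\vee\tau_n)$ belongs to $\mathcal{S}_{\tau_n}^d$, since each component dominates $\tau_n$. Consequently $\mathcal{E}[X(\sigma\vee\tau_n)]+\frac{\varepsilon}{2}\leq\mathcal{E}[X(\sigma)]$ for every $n$.

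Finally, because $\sigma_i\geq S$ for each $i$, we have $\sigma_i\vee\tau_n\downarrow\sigma_i\vee S=\sigma_i$ as $\tau_n\downarrow S$, so $\sigma\vee\tau_n\downarrow\sigma$ componentwise in $\mathcal{S}_0^d$. Letting $n\to\infty$ and applying the RC$\mathcal{E}$ property of $\{X(\tau),\tau\in\mathcal{S}_0^d\}$ yields $\mathcal{E}[X(\sigma)]+\frac{\varepsilon}{2}\leq\mathcal{E}[X(\sigma)]$, a contradiction, which completes the argument.

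The only genuinely new point relative to the single-stopping proof of Proposition \ref{P3} is the bookkeeping for the vector $\sigma\vee\tau_n$; once one checks that it lies in $\mathcal{S}_{\tau_n}^d$ and converges down to $\sigma$, the argument is identical. Hence I expect no serious obstacle: the supermartingale structure and the sup-identity of Proposition \ref{P3.1} do all the work, and the RC$\mathcal{E}$ hypothesis is invoked exactly once, at the final limit.
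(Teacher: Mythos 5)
Your proof is correct and is precisely the adaptation of the argument for Proposition \ref{P3} that the paper has in mind (the paper simply writes ``the proof is similar with the one of Proposition \ref{P3}'' and omits it). The componentwise definition of $\sigma\vee\tau_n$ and the check that it lies in $\mathcal{S}_{\tau_n}^d$ and decreases to $\sigma$ is exactly the right bookkeeping, and the rest goes through verbatim using Proposition \ref{P3.1}(iii)--(iv).
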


    \begin{proof}
    	The proof is similar with the one of Proposition \ref{P3}. We omit it.
    \end{proof}

    \begin{remark}\label{r3}
    	Similar with the analysis of Remark \ref{r1}, suppose that $\{X(\tau),\tau\in\mathcal{S}_0^d\}$ is a $d$-admissible family with $\sup_{\tau\in\mathcal{S}_0^d}\mathcal{E}[X(\tau)]<\infty$ and right-continuous in $\mathcal{E}$-expectation along stopping times greater than $\sigma$ (i.e., if a sequence of stopping times $\{\tau_n\}_{n\in\mathbb{N}}\subset \mathcal{S}_\sigma^d$ satisfies $\tau_n\downarrow \tau$, then one has $\mathcal{E}[X(\tau)]=\lim_{n\rightarrow\infty}\mathcal{E}[X(\tau_n)]$). Then, the family of value functions $\{v(S),S\in\mathcal{S}_0\}$ is right-continuous in $\mathcal{E}$-expectation along stopping times greater than $\sigma$.
    \end{remark}

	By Theorem \ref{T5} and Proposition \ref{P3.3}, the value function and the optimal multiple stopping times of the optimal $d$-stopping problem can be constructed by the ones of the optimal $(d-1)$-stopping problem. Therefore, by induction, the multiple stopping problem can be reduced to nested single stopping problems. Besides, the existence of the optimal stopping time for the single stopping problem associated with the new reward $\{\widehat{X}(S),S\in\mathcal{S}_0\}$ is the building block for constructing the optimal stopping time for the original $d$-stopping problem. According to Theorem \ref{T1}, it remains to investigate the regularity of this new reward family.
	
	\begin{definition}
		A $d$-admissible family $\{X(\tau),\tau\in\mathcal{S}_0^d\}$ is said to be uniformly right-continuous (resp. left-continuous) along stopping times in $\mathcal{E}$-expectation [URC$\mathcal{E}$ (resp., ULC$\mathcal{E}$)] if for each $i=1,\cdots,d$, $S\in\mathcal{S}_0$ and a sequence of stopping times $\{S_n\}_{n\in\mathbb{N}}$ such that $S_n\downarrow S$ (resp., $S_n\uparrow S$), one has
		\begin{displaymath}
		\lim_{n\rightarrow\infty}\sup_{\theta\in\mathcal{S}_0^{d-1}}\mathcal{E}[|X^{(i)}(\theta,S_n)-X^{(i)}(\theta,S)|]=0.
		\end{displaymath}
	\end{definition}
	
	\begin{proposition}\label{P3.4}
		Let $(\widetilde{\mathcal{E}},\textrm{Dom}(\widetilde{\mathcal{E}}))$ be an $\mathbb{F}$-expectation satisfying Assumptions (H0)-(H5). Suppose that the $\mathbb{F}$-expectation $(\mathcal{E},\textrm{Dom}(\mathcal{E}))$ is  dominated by $(\widetilde{\mathcal{E}},\textrm{Dom}(\widetilde{\mathcal{E}}))$ and $\{X(\tau),\tau\in\mathcal{S}_0^d\}$ is a URC$\widetilde{\mathcal{E}}$ $d$-admissible family  with $\sup_{\tau\in\mathcal{S}_0^d}\mathcal{E}[X(\tau)]<\infty$. Then, the family $\{\widehat{X}(\tau),\tau\in\mathcal{S}_0\}$ defined by \eqref{3.3} is RC$\mathcal{E}$.
	\end{proposition}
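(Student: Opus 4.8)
The plan is to mirror the proof of Theorem \ref{T3}, replacing the single running stopping time by a $(d-1)$-tuple. Exactly as in that proof, since $\widehat{X}=\max_{1\le i\le d}u^{(i)}$ and a maximum of finitely many families that are RC$\mathcal{E}$ is again RC$\mathcal{E}$, it suffices to show that each $\{u^{(i)}(\tau),\tau\in\mathcal{S}_0\}$ is RC$\mathcal{E}$. Fix $i$. The essential device is to decouple the two roles played by the argument $\theta$ in the definition \eqref{3.2} (it simultaneously serves as the lower bound for the running family and is inserted into the $i$-th slot of $X$) by introducing, for $S,\theta\in\mathcal{S}_0$, the two-parameter value function
\[
U^{(i)}(S,\theta)=\esssup_{\tau\in\mathcal{S}_S^{d-1}}\mathcal{E}_S[X^{(i)}(\tau,\theta)],
\]
so that $u^{(i)}(\theta)=U^{(i)}(\theta,\theta)$.

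First I would fix $\theta$ and observe that, although the whole family $\{X^{(i)}(\tau,\theta),\tau\in\mathcal{S}_0^{d-1}\}$ need not be $(d-1)$-admissible (because $X^{(i)}(\tau,\theta)$ is measurable with respect to $\mathcal{F}_{\theta\vee\tau_1\vee\cdots}$ rather than $\mathcal{F}_{\tau_1\vee\cdots}$), its restriction to $\tau\in\mathcal{S}_\theta^{d-1}$ is $(d-1)$-admissible. Using the dominance inequality \eqref{0} together with the URC$\widetilde{\mathcal{E}}$ hypothesis applied to the coordinates other than $i$, this restricted family is right-continuous in $\mathcal{E}$-expectation along stopping times greater than $\theta$. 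Hence Proposition \ref{P3.2}, applied with $d-1$ in place of $d$ and in the localized form of Remark \ref{r3}, shows that $\{U^{(i)}(S,\theta),S\in\mathcal{S}_0\}$ is right-continuous in $\mathcal{E}$-expectation along stopping times greater than $\theta$; in particular $\mathcal{E}[U^{(i)}(\theta_n,\theta)]\to\mathcal{E}[U^{(i)}(\theta,\theta)]=\mathcal{E}[u^{(i)}(\theta)]$ whenever $\theta_n\downarrow\theta$.

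Next I would prove the $d$-dimensional analogue of Lemma \ref{L3}, namely
\[
|\mathcal{E}[U^{(i)}(\tau,\sigma_1)]-\mathcal{E}[U^{(i)}(\tau,\sigma_2)]|\le\sup_{\theta\in\mathcal{S}_0^{d-1}}\widetilde{\mathcal{E}}[|X^{(i)}(\theta,\sigma_1)-X^{(i)}(\theta,\sigma_2)|],
\]
following the same chain of estimates: pass from the difference of $\mathcal{E}$-expectations to $\widetilde{\mathcal{E}}$ of an absolute value via \eqref{0}, move the two outer essential suprema inside the absolute value, dominate the inner conditional $\mathcal{E}_\tau$ by $\widetilde{\mathcal{E}}_\tau$, and finally use the upward-directedness of the relevant set to extract an approximating sequence and bound by the supremum over $\mathcal{S}_0^{d-1}$. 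Then, from the triangle inequality
\[
|\mathcal{E}[u^{(i)}(\theta_n)]-\mathcal{E}[u^{(i)}(\theta)]|\le|\mathcal{E}[U^{(i)}(\theta_n,\theta_n)]-\mathcal{E}[U^{(i)}(\theta_n,\theta)]|+|\mathcal{E}[U^{(i)}(\theta_n,\theta)]-\mathcal{E}[U^{(i)}(\theta,\theta)]|,
\]
the first term tends to $0$ by the above lemma and the URC$\widetilde{\mathcal{E}}$ property (taking $\sigma_1=\theta$, $\sigma_2=\theta_n$), while the second tends to $0$ by the previous step, giving $\mathcal{E}[u^{(i)}(\theta_n)]\to\mathcal{E}[u^{(i)}(\theta)]$ and hence the RC$\mathcal{E}$ property of $\{\widehat{X}(\tau),\tau\in\mathcal{S}_0\}$ defined by \eqref{3.3}.

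The substance is the same as in Theorem \ref{T3}: it is the decoupling that lets one invoke the already-established $(d-1)$-dimensional regularity (Proposition \ref{P3.2}) for the frozen family, together with the uniform estimate playing the role of Lemma \ref{L3}. The only genuinely new bookkeeping point relative to the double-stopping case is verifying that $\{X^{(i)}(\tau,\theta),\tau\in\mathcal{S}_\theta^{d-1}\}$ is $(d-1)$-admissible and right-continuous in $\mathcal{E}$-expectation along stopping times greater than $\theta$; the uniformity built into the definition of URC$\widetilde{\mathcal{E}}$ is precisely what survives the essential supremum taken over the remaining $d-1$ coordinates, so no new idea beyond the $d=2$ argument is required, and I expect this verification, rather than any estimate, to be the main place where care is needed.
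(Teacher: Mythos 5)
Your proposal is correct and follows essentially the same route as the paper, which omits this proof with the remark that it is ``similar with the one of Theorem \ref{T3}'': you reduce to each $u^{(i)}$, freeze the $i$-th slot to get the two-parameter function $U^{(i)}(S,\theta)$, handle the first argument via Proposition \ref{P3.2} and Remark \ref{r3} applied to the restricted family on $\mathcal{S}_\theta^{d-1}$, and handle the second argument via the $d$-dimensional analogue of Lemma \ref{L3} together with the URC$\widetilde{\mathcal{E}}$ hypothesis. Your closing observation correctly identifies the only point requiring care beyond the $d=2$ case, namely the admissibility and localized right-continuity of the frozen $(d-1)$-parameter family.
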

	
	\begin{proof}
		The proof is similar with the one of Theorem \ref{T3}, so we omit it.
	\end{proof}

 Since the left-continuity along stopping times in $\mathcal{E}$-expectation relies on the existence of optimal stopping times, the conditions under which the LC$\mathcal{E}$ holds is more restrictive than the RC$\mathcal{E}$ case and the proof of LC$\mathcal{E}$ is more complicated as explained before Theorem \ref{t2} in Secion 3.
    	\begin{proposition}\label{P3.5}
    	Suppose that the $\mathbb{F}$-expectation $(\mathcal{E},\textrm{Dom}(\mathcal{E}))$ satisfies (H0)-(H7) and $\{X(\tau),\tau\in\mathcal{S}_0^d\}$ is a UC${\mathcal{E}}$ $d$-admissible family (i.e., both URC$\mathcal{E}$ and ULC$\mathcal{E}$) with $\sup_{\tau\in\mathcal{S}_0^d}\mathcal{E}[X(\tau)]<\infty$. Then, the family $\{\widehat{X}(\tau),\tau\in\mathcal{S}_0\}$ defined by \eqref{3.3} is LC$\mathcal{E}$.
    \end{proposition}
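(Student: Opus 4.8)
The plan is to fix a sequence $\theta_n\uparrow\theta$ in $\mathcal{S}_0$ and prove $\lim_n\mathcal{E}[\widehat{X}(\theta_n)]=\mathcal{E}[\widehat{X}(\theta)]$, following the scheme of Theorem \ref{t2} but arranging things so that the finite maximum in \eqref{3.3} is dealt with in one stroke. The naive route — show each $u^{(i)}$ is LC$\mathcal{E}$ and then pass to the maximum — fails, because the maximum of finitely many LC$\mathcal{E}$ families need not be LC$\mathcal{E}$: the individual convergences $\mathcal{E}[u^{(i)}(\theta_n)]\to\mathcal{E}[u^{(i)}(\theta)]$ give no control of $\mathcal{E}[\max_i u^{(i)}(\theta_n)]$. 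To circumvent this I would first \emph{freeze} the reward at the limit $\theta$: for each $i$ set
\[X^{\prime(i)}(\tau,\theta)=X^{(i)}(\tau,\theta)\,I_{\{\tau_1\wedge\cdots\wedge\tau_{d-1}\geq\theta\}}-I_{\{\tau_1\wedge\cdots\wedge\tau_{d-1}<\theta\}},\qquad \tau\in\mathcal{S}_0^{d-1},\]
which is a genuine $(d-1)$-admissible family, bounded below with $\sup_\tau\mathcal{E}[|X^{\prime(i)}(\tau,\theta)|]<\infty$, and let $v^{\prime(i)}(S)=\esssup_{\tau\in\mathcal{S}_S^{d-1}}\mathcal{E}_S[X^{\prime(i)}(\tau,\theta)]$. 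Since $X^{\prime(i)}(\cdot,\theta)=X^{(i)}(\cdot,\theta)$ on $\mathcal{S}_\theta^{d-1}$, one has $v^{\prime(i)}(\theta)=u^{(i)}(\theta)$, so that $\Phi(\theta):=\max_i v^{\prime(i)}(\theta)=\widehat{X}(\theta)$. Because (H6) forces $\mathcal{E}$ to dominate itself (the Example after Remark \ref{R3}), I may use $|\mathcal{E}[\xi]-\mathcal{E}[\eta]|\leq\mathcal{E}[|\xi-\eta|]$ freely.

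The second step is a comparison transferring the whole problem to the frozen functions. Writing $\Phi(\theta_n)=\max_i v^{\prime(i)}(\theta_n)$, the elementary bound $|\max_i a_i-\max_i b_i|\leq\sum_i|a_i-b_i|$ together with sub-additivity (H6) gives
\[\mathcal{E}[|\widehat{X}(\theta_n)-\Phi(\theta_n)|]\leq\sum_{i=1}^d\mathcal{E}[|u^{(i)}(\theta_n)-v^{\prime(i)}(\theta_n)|].\]
Each summand is estimated exactly as in Theorem \ref{t2} and Lemma \ref{L3}: the difference of essential suprema is bounded by $\esssup_{\tau\in\mathcal{S}_{\theta_n}^{d-1}}\mathcal{E}_{\theta_n}[|X^{(i)}(\tau,\theta_n)-X^{\prime(i)}(\tau,\theta)|]$; one splits $|X^{(i)}(\tau,\theta_n)-X^{\prime(i)}(\tau,\theta)|\leq|X^{(i)}(\tau,\theta_n)-X^{(i)}(\tau,\theta)|+\eta\, I_{A_n}$ with $A_n=\{\theta_n<\theta\}$ and $\eta=1+\esssup_{\sigma\in\mathcal{S}_0^d}X(\sigma)\in\textrm{Dom}^+(\mathcal{E})$ (Lemma \ref{L4.1}); and one applies the Fatou–tower–upward-directedness bound $\mathcal{E}[\esssup_\tau\mathcal{E}_{\theta_n}[\,\cdot\,]]\leq\sup_\tau\mathcal{E}[\,\cdot\,]$ of Lemma \ref{L3}. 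The URC/ULC$\mathcal{E}$ hypothesis annihilates the first term and $I_{A_n}\downarrow0$ with (H2) annihilates the second, so $\mathcal{E}[|\widehat{X}(\theta_n)-\Phi(\theta_n)|]\to0$. It therefore suffices to prove $\mathcal{E}[\Phi(\theta_n)]\to\mathcal{E}[\Phi(\theta)]$.

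The hard part is this last step, the left-continuity of $\Phi=\max_i v^{\prime(i)}$ — the very measurability difficulty highlighted before Theorem \ref{t2}, now carrying the extra maximum. The crucial observation is that $\Phi(S)=\esssup\{\mathcal{E}_S[X^{\prime(i)}(\tau,\theta)]:1\leq i\leq d,\ \tau\in\mathcal{S}_S^{d-1}\}$ is itself the value function of a \emph{single, frozen} optimal stopping problem whose reward family is indexed by the enlarged set $(i,\tau)$; the associated conditional-expectation family is upward directed (combine two candidates by an $\mathcal{F}_S$-measurable choice of the larger), so $\Phi$ is an $\mathcal{E}$-supermartingale system, and an optimizer of $\Phi(\theta_n)$ is produced by an $\mathcal{F}_{\theta_n}$-measurable selection of the maximizing sheet $i$ followed by the optimal $(d-1)$-stopping time for that sheet. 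The latter exists and lies in $\mathcal{S}_\theta^{d-1}$, since the penalty $-I_{\{\cdot<\theta\}}$ forces optimizers to be $\geq\theta$, by the inductive $(d-1)$-dimensional analogue of Theorem \ref{t1}. On $\{\tau\geq\theta\}$ the frozen reward equals $X^{(i)}(\cdot,\theta)$, which is left-continuous in $\mathcal{E}$-expectation by the ULC$\mathcal{E}$ assumption on $X$ (this is a direct property of $X$, so the induction is not circular).

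Feeding these optimizers into the argument of Proposition \ref{P4} — the optimal stopping times $\rho_n^*$ increase to a limit $\bar{\rho}$, the selected sheet index stabilizes along a measurable partition, and left-continuity of the frozen reward yields $\lim_n\mathcal{E}[\Phi(\theta_n)]=\mathcal{E}[X^{(\bar{\imath})}(\bar{\rho},\theta)]\leq\mathcal{E}[\Phi(\theta)]$, the reverse inequality coming from the supermartingale property — gives $\mathcal{E}[\Phi(\theta_n)]\to\mathcal{E}[\Phi(\theta)]$. Combined with the second step this proves that $\widehat{X}$ is LC$\mathcal{E}$; as a byproduct, since each $v^{\prime(i)}$ is LC$\mathcal{E}$ by the inductive Theorem \ref{t1} and $\mathcal{E}[|u^{(i)}(\theta_n)-v^{\prime(i)}(\theta_n)|]\to0$, each $u^{(i)}$ is LC$\mathcal{E}$ as well. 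I expect the genuine obstacle to be precisely the third paragraph: recognizing the maximum $\Phi$ as a single value function, securing the measurable sheet selection, and verifying that the penalized modification keeps all optimizers above $\theta$ so that the continuity of $X$ can actually be used.
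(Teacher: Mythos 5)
Your first two steps — freezing the $i$-th argument at $\theta$ with a penalization below $\theta$, and then estimating $\mathcal{E}$ of the difference between the frozen and unfrozen value functions by $\sup_{\tau}\mathcal{E}[|X^{(i)}(\tau,\theta)-X^{(i)}(\tau,\theta_n)|]+\mathcal{E}[\eta I_{\{\theta_n<\theta\}}]$ via self-domination, the Lemma \ref{L3} bound, ULC$\mathcal{E}$ and (H2) — are exactly the paper's argument (the paper writes the indicator with $\tau_1\vee\tau_2$ rather than your $\wedge$, but its subsequent formulas are only consistent with the $\wedge$ version, and either choice yields admissibility and the same estimate). Your appeal to an inductive $(d-1)$-dimensional analogue of Theorem \ref{t1} for the LC$\mathcal{E}$ of the frozen value functions is also in line with the paper, which for $d=3$ unwinds this recursion explicitly through the families $X^{i,j}$, Theorem \ref{t1}, Theorem \ref{T5} and Proposition \ref{P4}.

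The divergence, and the problem, is your third paragraph. The paper simply declares that it suffices to prove each $\{u^{(i)}(S)\}$ is LC$\mathcal{E}$ and never confronts the maximum; you rightly observe that convergence of each $\mathcal{E}[u^{(i)}(\theta_n)]$ does not control $\mathcal{E}[\max_i u^{(i)}(\theta_n)]$, but your proposed repair does not work as written. The family $\{\mathcal{E}_S[X^{\prime(i)}(\tau,\theta)]:1\leq i\leq d,\ \tau\in\mathcal{S}_S^{d-1}\}$ is \emph{not} upward directed: combining two candidates $(i_1,\tau_1)$ and $(i_2,\tau_2)$ with $i_1\neq i_2$ by an $\mathcal{F}_S$-measurable set produces a reward $X^{\prime(i_1)}(\tau_1,\theta)I_B+X^{\prime(i_2)}(\tau_2,\theta)I_{B^c}$ that is not of the form $X^{\prime(i)}(\tau,\theta)$ for any admissible index $(i,\tau)$. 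Consequently $\Phi=\max_i v^{\prime(i)}$ is not an $\mathcal{E}$-supermartingale system (already for the linear expectation the maximum of two value functions fails the supermartingale inequality, since $E_{S_0}$ of an $\mathcal{F}_S$-patched reward can strictly exceed $\max_i E_{S_0}[\,\cdot\,]$), so both the ``reverse inequality coming from the supermartingale property'' and the transplant of Proposition \ref{P4} collapse; the claim that ``the selected sheet index stabilizes along a measurable partition'' is likewise unsupported, as the maximizing sheet at $\theta_n$ may oscillate with $n$ on parts of $\Omega$. In short: where you deviate from the paper you have correctly located a real issue (one the paper glosses over), but the enlarged-index-set argument you offer in its place contains a false structural claim and would need a genuinely different mechanism — e.g., an $L^1$-type convergence $\mathcal{E}[|u^{(i)}(\theta_n)-u^{(i)}(\theta)\text{-comparand}|]\to 0$ strong enough to pass through the finite maximum — to be completed.
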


    \begin{proof}
    	By Proposition \ref{P6} and Theorem \ref{t2}, this result holds for the cases $d=1,2$. We only consider the case that $d=3$ and the other cases can be proved similarly.  By the definition of $\widehat{X}$, it is sufficient to prove that $\{u^{(i)}(S),S\in\mathcal{S}_0\}$ is LC$\mathcal{E}$. For any given $\theta\in\mathcal{S}_0$, let $\{\theta_n\}_{n\in\mathbb{N}}$ be a sequence of stopping times such that $\theta_n\uparrow\theta$. Set
    	\begin{displaymath}
    		X^{i}(\tau_1,\tau_2,\theta)=X^{(i)}(\tau_1,\tau_2,\theta)I_{\{\tau_1\vee\tau_2\geq \theta\}}-I_{\{\tau_1\vee\tau_2<\theta\}}, \ i=1,2,3,
    	\end{displaymath}
    	and
    	\begin{align*}
    		&X^{i,1}(\tau,\theta)=X^i(\theta,\tau,\theta)=X^{(i)}(\theta,\tau,\theta)I_{\{\tau\geq \theta\}}-I_{\{\tau<\theta\}},\\
    		&X^{i,2}(\tau,\theta)=X^i(\tau,\theta,\theta)=X^{(i)}(\tau,\theta,\theta)I_{\{\tau\geq \theta\}}-I_{\{\tau<\theta\}}.
    	\end{align*}
    	It is easy to check that the families $\{X^{i,j}(\tau,\theta),\tau\in\mathcal{S}_0\}$ are admissible and $\{X^i(\tau_1,\tau_2,\theta),\tau_1,\tau_2\in\mathcal{S}_0\}$ are biadmissible, where $i=1,2,3$ and $j=1,2$. We claim that the families $\{X^{i,j}(\tau,\theta),\tau\in\mathcal{S}_0\}$ are RC$\mathcal{E}$. In fact, consider a sequence of stopping times $\{\tau_n\}_{n\in\mathbb{N}}$ such that $\tau_n\downarrow\tau$. We deduce that
    	\begin{displaymath}
    		\mathcal{E}[|X^{i,1}(\tau_n,\theta)-X^{i,1}(\tau,\theta)|]\leq \mathcal{E}[|X^{(i)}(\theta,\tau_n,\theta)-X^{(i)}(\theta,\tau,\theta)|]+\mathcal{E}[\eta I_{\{\tau_n\geq \theta>\tau\}}],
    	\end{displaymath}
    	where $\eta=1+\esssup_{\tau\in\mathcal{S}_0}X^{(i)}(\theta,\tau,\theta)$. Applying Lemma \ref{L4.1} yields that $\eta\in\textrm{Dom}^+(\mathcal{E})$.  Noting that $I_{\{\tau_n\geq \theta>\tau\}}\downarrow 0$ and the family  $\{X(\tau_1,\tau_2,\tau_3),\tau_1,\tau_2,\tau_3\in\mathcal{S}\}$ is URC$\mathcal{E}$, we have
    	\begin{displaymath}
    		\lim_{n\rightarrow\infty}\mathcal{E}[|X^{i,1}(\tau_n,\theta)-X^{i,1}(\tau,\theta)|]=0.
    	\end{displaymath}
    	Hence, the claim follows.
    	 We now define the following value function
    	\begin{displaymath}
    		u^{i,j}(S)=\esssup_{\tau\in\mathcal{S}_S}\mathcal{E}_S[X^{i,j}(\tau,\theta)].
    	\end{displaymath}
    	By Proposition \ref{P3} and Theorem \ref{t1}, the family $\{u^{i,j}(S),S\in\mathcal{S}_0\}$ is C$\mathcal{E}$ and the optimal stopping time is greater than $\theta$. Then the family $\{\hat{u}^i(S),S\in\mathcal{S}_0\}$ inherits the properties of $\{u^{i,j}(S),S\in\mathcal{S}_0\}$, where $\hat{u}^i(S)=\max\{u^{i,1}(S),u^{i,2}(S)\}$. By Theorem \ref{T5}, the value function of the optimal single stopping problem with reward family $\{\hat{u}^i(S),S\in\mathcal{S}_0\}$, denoted by $u^i$,  coincides with the one of the optimal double stopping problem with reward family $\{X^i(\tau_1,\tau_2,\theta),\tau_1,\tau_2\in\mathcal{S}_0\}$, denoted by $u^{(i),\theta}$, that is
    	\begin{displaymath}
    		\esssup_{\tau\in\mathcal{S}_S}\mathcal{E}_S[\hat{u}^i(\tau)]={u}^i(S)=u^{(i),\theta}(S)=\esssup_{\tau_1,\tau_2\in\mathcal{S}_S}\mathcal{E}_S[X^i(\tau_1,\tau_2,\theta)].
    	\end{displaymath}
    	Applying Proposition \ref{P4}, the family $\{u^i(S),S\in\mathcal{S}_0\}$ is LC$\mathcal{E}$, which implies the LC$\mathcal{E}$ property of $\{u^{(i),\theta}(S),S\in\mathcal{S}_0\}$. Recalling the definition of $X^i$, we obtain that
    	\begin{displaymath}
    		\mathcal{E}[u^{(i)}(\theta)]=\mathcal{E}[\esssup_{\tau_1,\tau_2\in\mathcal{S}_\theta}\mathcal{E}_\theta[X^{(i)}(\tau_1,\tau_2,\theta)]]=\mathcal{E}[\esssup_{\tau_1,\tau_2\in\mathcal{S}_\theta}\mathcal{E}_\theta[X^i(\tau_1,\tau_2,\theta)]]=\lim_{n\rightarrow\infty}\mathcal{E}[u^{(i),\theta}(\theta_n)].
    	\end{displaymath}
    	By a similar analysis as the proof of Theorem \ref{t2}, we have
    	\begin{displaymath}
    		|\mathcal{E}[u^{(i),\theta}(\theta_n)]-\mathcal{E}[u^{(i)}(\theta_n)]|\leq \sup_{\tau_1,\tau_2\in\mathcal{S}_0}\mathcal{E}[|X^{(i)}(\tau_1,\tau_2,\theta)-X^{(i)}(\tau_1,\tau_2,\theta_n)|]+\mathcal{E}[\xi I_{\{\theta_n<\theta\}}],
    	\end{displaymath}
    	where $\xi=1+\esssup_{\tau=(\tau_1,\tau_2,\tau_3)\in\mathcal{S}_0^3}X(\tau)$. By Assumption (H2) and the ULC$\mathcal{E}$ property, we deduce that
    	\begin{displaymath}
    		\lim_{n\rightarrow\infty}|\mathcal{E}[u^{(i),\theta}(\theta_n)]-\mathcal{E}[u^{(i)}(\theta_n)]|=0.
    	\end{displaymath}
    	Hence, $\lim_{n\rightarrow\infty}\mathcal{E}[u^{(i)}(\theta_n)]=\mathcal{E}[u^{(i)}(\theta)]$, which completes the proof.
    \end{proof}
	
	With the help of Propositions \ref{P3.3}, \ref{P3.4} and \ref{P3.5}, we can now establish the existence result of the optimal stopping times for the multiple stopping problem.
	\begin{theorem}\label{T6}
		 Suppose that the $\mathbb{F}$-expectation $(\mathcal{E},\textrm{Dom}(\mathcal{E}))$ satisfies all the Assumptions (H0)-(H7)  and $\{X(\tau),\tau\in\mathcal{S}_0^d\}$ is a UC${\mathcal{E}}$ $d$-admissible family with $\sup_{\tau\in\mathcal{S}_0^d}\mathcal{E}[X(\tau)]<\infty$. Then, there exists an optimal stopping time $\tau^*\in\mathcal{S}_S^d$ for $v(S)$, that is
		\begin{displaymath}
		v(S)=\esssup_{\tau\in\mathcal{S}_S^d}\mathcal{E}_S[X(\tau)]=\mathcal{E}_S[X(\tau^*)].
		\end{displaymath}
	\end{theorem}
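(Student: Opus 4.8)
The plan is to argue by induction on the number $d$ of exercise rights, using the reduction of the $d$-stopping problem to a single stopping problem (Theorem \ref{T5}) together with the explicit construction of the optimal tuple in Proposition \ref{P3.3}. The base case $d=1$ is precisely Theorem \ref{T1}, so I assume the existence of optimal stopping times for every UC$\mathcal{E}$ $(d-1)$-admissible family and treat the inductive step for $d\geq 2$.

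First I would check that the new reward family $\{\widehat{X}(\tau),\tau\in\mathcal{S}_0\}$ defined by \eqref{3.3} meets the hypotheses of Theorem \ref{T1}. As in the double stopping case, the bound $0\leq\widehat{X}(\tau)\leq v(\tau)$ together with Proposition \ref{P3.1}(iv) gives $\sup_{\tau\in\mathcal{S}_0}\mathcal{E}[\widehat{X}(\tau)]<\infty$, while admissibility follows by the same gluing argument used earlier for $u_1$. Because $\mathcal{E}$ satisfies (H6), it is dominated by itself, so I may take $\widetilde{\mathcal{E}}=\mathcal{E}$ in Proposition \ref{P3.4}; since UC$\mathcal{E}$ entails URC$\mathcal{E}$, that proposition yields that $\widehat{X}$ is RC$\mathcal{E}$, whereas Proposition \ref{P3.5} yields that it is LC$\mathcal{E}$. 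Hence $\{\widehat{X}(\tau),\tau\in\mathcal{S}_0\}$ is a C$\mathcal{E}$ admissible family, and Theorem \ref{T1} produces an optimal $\theta^*$ for $u(S)$, namely $\theta^*=\essinf\{\theta\in\mathcal{S}_S:u(\theta)=\widehat{X}(\theta)\}$. By Theorem \ref{T5} this $\theta^*$ is simultaneously optimal for $v(S)$ and supplies condition 1 of Proposition \ref{P3.3}.

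Next I would establish condition 2 of Proposition \ref{P3.3} by invoking the induction hypothesis on the reduced families. For each $i$, freezing the $i$-th argument at the stopping time $\theta^*$ produces the $(d-1)$-admissible family $\{X^{(i)}(\tau,\theta^*),\tau\in\mathcal{S}_{\theta^*}^{d-1}\}$, whose value at $\theta^*$ is exactly $u^{(i)}(\theta^*)$. The key point is that this reduced family is again UC$\mathcal{E}$: each uniform-continuity estimate for the full family is taken as a supremum over the frozen coordinate, so specializing that coordinate to the particular stopping time $\theta^*$ (rather than to a deterministic time) costs nothing, and the remaining $d-1$ coordinates inherit the uniform estimates verbatim. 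The induction hypothesis then furnishes an optimal $(d-1)$-tuple $\theta^{(i)*}\in\mathcal{S}_{\theta^*}^{d-1}$ for $u^{(i)}(\theta^*)$. Feeding $\theta^*$ and the $\theta^{(i)*}$ into the construction \eqref{3.7}, with the $\mathcal{F}_{\theta^*}$-measurable partition $\{B_i\}$ on which $\widehat{X}(\theta^*)=u^{(i)}(\theta^*)$, Proposition \ref{P3.3} delivers a tuple $\tau^*\in\mathcal{S}_S^d$ optimal for $v(S)$.

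The main obstacle is the verification in the previous paragraph that the reduced families $\{X^{(i)}(\tau,\theta^*),\tau\in\mathcal{S}_{\theta^*}^{d-1}\}$ remain UC$\mathcal{E}$, that is, the measurability issue of replacing a deterministic time by the random optimal time $\theta^*$; this is exactly what the uniformity built into the definition of UC$\mathcal{E}$ (as opposed to plain C$\mathcal{E}$) is designed to overcome, and it explains why the stronger continuity hypothesis is imposed in the statement. Once this is granted, the remainder is a routine assembly of Theorems \ref{T1} and \ref{T5} with Propositions \ref{P3.3}--\ref{P3.5}.
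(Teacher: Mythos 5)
Your proposal is correct and follows essentially the same route as the paper: induction on $d$, with Propositions \ref{P3.4} and \ref{P3.5} giving the C$\mathcal{E}$ property of $\widehat{X}$, Theorem \ref{T1} (via Theorem \ref{T5}) producing $\theta^*$, the induction hypothesis applied to the reduced families $\{X^{(i)}(\theta,\theta^*)\}$, and Proposition \ref{P3.3} assembling the optimal tuple. Your explicit justification that the reduced families stay UC$\mathcal{E}$ after freezing a coordinate at the random time $\theta^*$ (since the uniform estimates are suprema over that coordinate) is a point the paper dismisses as ``easy to check,'' so your write-up is if anything slightly more complete.
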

	
	\begin{proof}
		We prove this result by induction. Indeed, the result holds true for the case $d=1,2$ by Theorems \ref{T1} and \ref{T4}. Fixed $d\geq 1$, suppose that the optimal stopping problem exists for all value functions induced by UC${\mathcal{E}}$ $d$-admissible families. Let $\{X(\tau),\tau\in\mathcal{S}_0^{d+1}\}$ be a $(d+1)$-admissible family which is UC${\mathcal{E}}$. By Proposition \ref{P3.4} and \ref{P3.5}, the corresponding reward family $\{\widehat{X}(\tau),\tau\in\mathcal{S}_0\}$ obtained by \eqref{3.2} and \eqref{3.3} is C$\mathcal{E}$. Hence, Theorem \ref{T1} shows that there exists an optimal stopping time $\theta^*$ for $u(S)$ defined by \eqref{3.4}. It is easy to check that the $d$-admissible family $\{X^{(i)}(\theta,\theta^*),\theta\in\mathcal{S}_{\theta^*}^d\}$ is UC${\mathcal{E}}$ for any $i=1,\cdots,d+1$, where $X^{(i)}$ is given by \eqref{3.5}. Therefore, by the induction assumption, there exists an optimal $\theta^{(i)*}\in\mathcal{S}_{\theta^*}^d$ for the value function $u^{(i)}(\theta^*)$. By Proposition \ref{P3.3}, we may construct the optimal stopping time $\tau^*\in\mathcal{S}_0^{d+1}$ for the value function corresponding to the $(d+1)$-admissible family $\{X(\tau),\tau\in\mathcal{S}_0^{d+1}\}$. The proof is complete.
	\end{proof}
	
	In order to characterize the optimal multiple stopping times in a minimal way, we should first define a partial order relation $\prec_d$ on $\mathbb{R}^d$. This relation can be found in \cite{KQR} and for readers' convenience, we list it here: for $d=1$ and any $a,b\in\mathbb{R}$, $a\prec_1 b$ if and only if $a\leq b$, and for $d>1$ and any $(a_1,\cdots,a_d),(b_1,\cdots,b_d)\in\mathbb{R}^d$, $(a_1,\cdots,a_d)\prec_d(b_1,\cdots,b_d)$ if and only if either $a_1\wedge\cdots\wedge a_d<b_1\wedge \cdots\wedge b_d$ or  
	\begin{displaymath}
		\begin{cases}
		a_1\wedge\cdots\wedge a_d=b_1\wedge \cdots\wedge b_d, \textrm{ and, for } i=1,2,\cdots,d,\\
		a_i=a_1\wedge\cdots\wedge a_d\Rightarrow\begin{cases}
		b_i=b_1\wedge \cdots\wedge b_d \textrm{ and }\\
		(a_1,\cdots,a_{i-1},a_{i+1}\cdots,a_d)\prec_{d-1}(b_1,\cdots,b_{i-1},b_{i+1}\cdots,b_d).
		\end{cases}
		\end{cases}
	\end{displaymath}
	
	\begin{definition}
		For each fixed $S\in\mathcal{S}_0$, a $d$-stopping time $(\tau_1,\cdots,\tau_d)\in\mathcal{S}_S^d$ is said to be $d$-minimal optimal for the value function $v(S)$ defined by \eqref{3.1} if it is minimal for the order $\prec_d$ in the set $\{\tau\in\mathcal{S}_S^d:v(S)=\mathcal{E}_S[X(\tau)]\}$ which is the collection of all optimal stopping times.
	\end{definition}

    \begin{proposition}
    	For each fixed $S\in\mathcal{S}_0$, a $d$-stopping time $(\tau_1,\cdots,\tau_d)\in\mathcal{S}_S^d$ is  $d$-minimal optimal for the value function $v(S)$ defined by \eqref{3.1} if and only if:
    	\begin{description}
    		\item[(1)] $\theta^*=\tau_1\wedge\cdots\wedge \tau_d$ is the minimal optimal stopping time for $u(S)$ defined by \eqref{3.4};
    		\item[(2)] for $i=1,\cdots,d$, $\theta^{*(i)}=\tau_i\in\mathcal{S}_S^{d-1}$ is the $(d-1)$-minimal optimal stopping time for $u^{(i)}(\theta^*)$ defined by \eqref{3.2} on the set $\{u^{(i)}(\theta^*)\geq \vee_{k\neq i} u^{(k)}(\theta^*)\}$.
    	\end{description}
    \end{proposition}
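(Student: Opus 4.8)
The plan is to prove both implications simultaneously by induction on $d$, using Theorem \ref{T5} to identify $v$ with the auxiliary single-stopping value function $u$, and leaning on the optimality conditions already recorded in Proposition \ref{P3.3} and in the (unlabelled) proposition immediately following it. The base case $d=1$ is immediate: there $\widehat{X}=X$ and $u=v$, condition (2) is vacuous, and the assertion collapses to the statement that a single stopping time is $\prec_1$-minimal optimal (i.e. $\leq$-minimal) precisely when it is the minimal optimal stopping time for $v$, which is Theorem \ref{T1}. So I assume the characterization holds for $(d-1)$-stopping problems and establish it for $d$.

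For necessity, let $(\tau_1,\dots,\tau_d)$ be $d$-minimal optimal for $v(S)$ and set $\theta^*=\tau_1\wedge\cdots\wedge\tau_d$. The proposition following Proposition \ref{P3.3} already gives that $\theta^*$ is optimal for $u(S)$ and that, on each set $\{\tau_i=\theta^*\}$, the reduced tuple $(\tau_1,\dots,\tau_{i-1},\tau_{i+1},\dots,\tau_d)$ is optimal for $u^{(i)}(\theta^*)$; the task is to upgrade ``optimal'' to ``minimal optimal''. First I would show $\theta^*$ equals the minimal optimal stopping time $\bar\theta$ for $u(S)$ furnished by Theorem \ref{T1}: if not, then $\bar\theta\leq\theta^*$ with $P(\bar\theta<\theta^*)>0$, and feeding $\bar\theta$ into Proposition \ref{P3.3} produces an optimal $d$-tuple whose minimum is $\bar\theta$, hence $\prec_d$-strictly below $(\tau_1,\dots,\tau_d)$ on $\{\bar\theta<\theta^*\}$ via the first alternative in the definition of $\prec_d$, contradicting minimality. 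With $\theta^*=\bar\theta$ fixed, the second alternative of $\prec_d$ localizes the remaining minimality to the reduced problems: were some reduced tuple not $(d-1)$-minimal optimal for $u^{(i)}(\theta^*)$ on a positive-measure part of $\{u^{(i)}(\theta^*)\geq\vee_{k\neq i}u^{(k)}(\theta^*)\}$, the induction hypothesis would yield a $\prec_{d-1}$-smaller optimal reduced tuple, which reassembled through \eqref{3.7} gives an optimal $d$-tuple $\prec_d$-below $(\tau_1,\dots,\tau_d)$, again a contradiction.

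For sufficiency, assume (1) and (2) and reconstruct $(\tau_1,\dots,\tau_d)$ by \eqref{3.7}; optimality for $v(S)$ is then exactly Proposition \ref{P3.3}. To see minimality, suppose $(\sigma_1,\dots,\sigma_d)$ is optimal with $(\sigma_1,\dots,\sigma_d)\prec_d(\tau_1,\dots,\tau_d)$. By the proposition following Proposition \ref{P3.3}, its minimum $\sigma_1\wedge\cdots\wedge\sigma_d$ is optimal for $u(S)$, hence $\geq\theta^*$ because $\theta^*$ is the minimal optimal stopping time for $u$; but $\prec_d$ forces this minimum $\leq\theta^*$, so the two minima coincide a.s. Only the second alternative of $\prec_d$ can then hold, which forces, for each coordinate attaining the common minimum, the corresponding reduced tuple of $\sigma$ to be $\prec_{d-1}$ below that of $\tau$; since the latter is $(d-1)$-minimal optimal, the induction hypothesis forces equality of the reduced tuples, whence $\sigma=\tau$ a.s. Thus no strictly smaller optimal $d$-tuple exists and $(\tau_1,\dots,\tau_d)$ is $d$-minimal optimal.

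The hard part will be the bookkeeping on the random partition attached to $\widehat{X}$—the sets $\{u^{(i)}(\theta^*)\geq\vee_{k\neq i}u^{(k)}(\theta^*)\}$ and the $\mathcal{F}_{\theta^*}$-measurable partition $\{B_i\}$ of Proposition \ref{P3.3}—and checking that $\prec_d$-minimality is genuinely preserved by the gluing \eqref{3.7}. The subtlety is the ``for every $i$ with $a_i=$ the minimum'' clause in the definition of $\prec_d$: one must handle the overlap where several coordinates simultaneously realize $\theta^*$, choose the partition consistently, and confirm that local $(d-1)$-minimality on each piece assembles to global $d$-minimality without conflict on the overlaps. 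This localization and gluing, rather than any single estimate, is where the argument demands the most care.
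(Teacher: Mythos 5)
The paper states this proposition without proof (it is presumably meant to be an adaptation of the corresponding result in \cite{KQR}), so there is no in-paper argument to compare yours against; I can only assess the plan on its own terms. Your architecture is the natural and almost certainly intended one: induction on $d$, the identification $v=u$ from Theorem \ref{T5}, and the two companion propositions surrounding \eqref{3.7}. The base case, the argument that $\theta^*=\tau_1\wedge\cdots\wedge\tau_d$ must coincide with the minimal optimal stopping time for $u(S)$ (via gluing a P3.3-reassembled tuple on $\{\bar\theta<\theta^*\}$ with $\tau$ elsewhere), and the uniqueness argument in the sufficiency direction are all sound in outline.

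The genuine gap is precisely the point you defer at the end as ``bookkeeping'': the mismatch between the sets $A_i=\{\tau_i=\theta^*\}$, on which the proposition following Proposition \ref{P3.3} gives optimality of the reduced tuples and on which the recursive clause of $\prec_d$ operates, and the sets $C_i=\{u^{(i)}(\theta^*)\geq\vee_{k\neq i}u^{(k)}(\theta^*)\}$ appearing in condition (2); the remark after Proposition \ref{P6} warns explicitly that the inclusion $A_i\subset C_i$ can be strict. In the necessity direction you only ever obtain (minimal) optimality of the reduced tuple on $A_i$; on $C_i\setminus A_i$ one has $\tau_i>\theta^*$, and there is no a priori reason why $\mathcal{E}_{\theta^*}[X^{(i)}((\tau_j)_{j\neq i},\theta^*)]$ should equal $u^{(i)}(\theta^*)$ there, so establishing (2) on all of $C_i$ needs a dedicated argument (or a proof that minimality forces $C_i\setminus A_i$ to be null). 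In the sufficiency direction, ``reconstruct $(\tau_1,\dots,\tau_d)$ by \eqref{3.7}'' reproduces $\tau$ only if the partition $\{B_i\}$ can be chosen with $B_i\subset A_i\cap C_i$, and the identity $\Omega=\cup_i(A_i\cap C_i)$ is not automatic from (1)--(2) alone; moreover a tuple reassembled from a $\prec_{d-1}$-smaller reduced tuple is $\prec_d$-below $\tau$ only where $B_i\subset A_i$, since the clause ``$a_i=a_1\wedge\cdots\wedge a_d\Rightarrow b_i=b_1\wedge\cdots\wedge b_d$'' fails on $B_i\setminus A_i$. These issues are the actual content of the equivalence rather than routine measurability checks; as written, your proposal is a correct strategy whose decisive step is missing.
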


	\section{Aggregation of the optimal multiple stopping problem}

	We first recall some basic results in \cite{CR}. Suppose that the $\mathbb{F}$-expectation $(\mathcal{E},\textrm{Dom}(\mathcal{E}))$ is reduced to the $g$-expectation $(\{\mathcal{E}^g_t[\cdot]\}_{t\in[0,T]}, L^2(\mathcal{F}_T))$ satisfying the assumptions in Example 1.1. Now, given an adapted, nonnegative process $\{X_t\}_{t\in[0,T]}$ which has continuous sample path with $E[\sup_{t\in[0,T]}X_t^2]<\infty$, the value function is defined by:
	\begin{displaymath}
		v_t^g=\esssup_{\tau\in\mathcal{S}_t}\mathcal{E}_t^g[X_\tau].
	\end{displaymath}
	Cheng and Riedel \cite{CR} proves that the first hitting time
	\begin{displaymath}
		\tau^*=\inf\{t\geq 0: v_t^g=X_t \}
	\end{displaymath}
	is an optimal stopping time. This formulation makes it efficient to compute an optimal stopping time.
	
	In this section, we aim to express the optimal stopping times studied in the previous parts by the hitting times of processes. According to Theorem \ref{T6}, the multiple optimal stopping times can be constructed  by the induction method.  Therefore, it is sufficient to study the double stopping case, which remains to aggregate the value function and the reward family. For this purpose, we need to make some stronger regularity conditions.
	
	In the following part of this section, assume that the $\mathbb{F}$-expectation $(\mathcal{E},\textrm{Dom}(\mathcal{E}))$ satisfies (H0)-(H5).	The following proposition can be used to aggregate the value function of both the single and multiple stopping problem.
	\begin{proposition}\label{P4.1}
		Let $\{h(\tau),\tau\in\mathcal{S}_0\}$ be a nonnegative, RC$\mathcal{E}$ $\mathcal{E}$-supermaringale system with $h(0)<\infty$. Then, there exists an adapted process $\{h_t\}_{t\in[0,T]}$ which is RCLL such that it aggregates the family $\{h(\tau),\tau\in\mathcal{S}_0\}$, i.e., $h_\tau=h(\tau)$, for any $\tau\in\mathcal{S}_0$.
	\end{proposition}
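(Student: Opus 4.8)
The plan is to build the aggregating process as the right limit of $h$ along the rationals and then to identify $h_\tau=h(\tau)$ first at deterministic times and afterwards at arbitrary stopping times. I would begin by recording two consequences of the hypotheses. Since $\{h(\tau),\tau\in\mathcal{S}_0\}$ is an $\mathcal{E}$-supermartingale system, for every $\tau\in\mathcal{S}_0$ we have $\mathcal{E}[h(\tau)]=\mathcal{E}_0[h(\tau)]\le h(0)<\infty$, so that $\sup_{\tau\in\mathcal{S}_0}\mathcal{E}[h(\tau)]<\infty$; by Lemma \ref{L4.1} the variable $\eta:=\esssup_{\tau\in\mathcal{S}_0}h(\tau)$ belongs to $\textrm{Dom}^+(\mathcal{E})$ and dominates every $h(\tau)$. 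Moreover, the deterministic map $t\mapsto\mathcal{E}[h(t)]$ is nonincreasing (supermartingale property plus time consistency) and right-continuous (RC$\mathcal{E}$ along deterministic sequences).

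The core analytic step is to show that, outside a single null set, the one-sided limits of $r\mapsto h(r)(\omega)$ along the rationals exist at every $t$. This requires an oscillation estimate: one must establish a downcrossing (upcrossing) inequality for the nonnegative $\mathcal{E}$-supermartingale $\{h(r),r\in\mathbb{Q}\cap[0,T]\}$, bounding the expected number of downcrossings of any level interval $[a,b]$ by a multiple of $\mathcal{E}[h(0)]+|a|$. Because only (H0)--(H5) are in force here (no sub-additivity or positive homogeneity), this estimate is the genuine technical obstacle; it is the nonlinear analogue of the Chen--Peng downcrossing theorem for $g$-supermartingales. Granting it, a Borel--Cantelli argument over a countable family of rational levels yields the existence of $h_t^+:=\lim_{r\downarrow t,\,r\in\mathbb{Q}}h(r)$ and of the left limits, simultaneously for all $t$. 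I would then set $h_t:=h_t^+$, which is RCLL by construction and adapted since, by right-continuity of the filtration, $h_t^+$ is $\mathcal{F}_{t+}=\mathcal{F}_t$-measurable.

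Next I would identify $h_t$ with $h(t)$ at each fixed deterministic $t$. The nonlinear Fatou lemma (Proposition \ref{P2.8}) applied to $h(r)\to h_t^+$ as $r\downarrow t$, together with the supermartingale inequality $\mathcal{E}_t[h(r)]\le h(t)$, gives $\mathcal{E}_t[h_t^+]\le h(t)$; as $h_t^+$ is $\mathcal{F}_t$-measurable this reads $h_t^+\le h(t)$ a.s. For the reverse direction, dominated convergence for $\mathcal{E}$ (Assumption (H2), with dominating variable $\eta$) yields $\mathcal{E}[h_t^+]=\lim_{r\downarrow t}\mathcal{E}[h(r)]=\mathcal{E}[h(t)]$, the last equality by RC$\mathcal{E}$. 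Combining $h_t^+\le h(t)$ with $\mathcal{E}[h_t^+]=\mathcal{E}[h(t)]$ and invoking strict monotonicity forces $h_t^+=h(t)$ a.s.

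Finally I would extend the identification to an arbitrary $\tau\in\mathcal{S}_0$ by discretisation. Choosing dyadic stopping times $\tau_n\downarrow\tau$ taking countably many deterministic values, the deterministic-time identification together with the local (pasting) property of the admissible family gives $h_{\tau_n}=h(\tau_n)$ a.s. for each $n$. Letting $n\to\infty$, right-continuity of $\{h_t\}$ yields $h_{\tau_n}\to h_\tau$, while the supermartingale inequality $\mathcal{E}_\tau[h(\tau_n)]\le h(\tau)$ and Fatou give $h_\tau\le h(\tau)$ (using that $h_\tau$ is $\mathcal{F}_\tau$-measurable), and dominated convergence together with RC$\mathcal{E}$ give $\mathcal{E}[h_\tau]=\lim_n\mathcal{E}[h_{\tau_n}]=\lim_n\mathcal{E}[h(\tau_n)]=\mathcal{E}[h(\tau)]$; strict monotonicity again forces $h_\tau=h(\tau)$ a.s. The hard part is the downcrossing estimate that underpins the existence of the one-sided limits; once that regularisation is in hand, the identification steps are routine consequences of the supermartingale property, Fatou, dominated convergence and strict monotonicity.
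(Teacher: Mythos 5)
Your proposal is correct and follows essentially the same route as the paper's proof: right-regularisation along the rationals, identification at deterministic times, then passage to general stopping times via dyadic approximation, dominated convergence, RC$\mathcal{E}$ and strict monotonicity. The downcrossing estimate you single out as the ``genuine technical obstacle'' need not be re-proved --- it is precisely what Proposition \ref{P2.2} (imported from \cite{BY1}) supplies and what the paper simply invokes to get an RCLL modification at deterministic times; your final step (deriving $h_\tau\le h(\tau)$ from the Fatou lemma and then combining with $\mathcal{E}[h_\tau]=\mathcal{E}[h(\tau)]$) is a harmless variant of the paper's argument, which instead pastes $\tau$ with $T$ on the set $\{h_\tau>h(\tau)\}$ and applies strict monotonicity.
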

	
	\begin{proof}
		Consider the process $\{h(t)\}_{t\in[0,T]}$. Since this process is an $\mathcal{E}$-supermartingale and the function $t\rightarrow\mathcal{E}[h(t)]$ is right-continuous, by Proposition \ref{P2.2}, there is an $\mathcal{E}$-supermartingale $\{h_t\}_{t\in[0,T]}$ which is RCLL such that for each $t\in[0,T]$, $h_t=h(t)$, a.s. For each $n\in\mathbb{N}$, set $\mathcal{I}_n=\{0,\frac{1}{2^n}\wedge T, \frac{2}{2^n}\wedge T,\cdots, T\}$ and $\mathcal{I}=\cup_{n=1}^\infty\mathcal{I}_n$. Then, for any stopping time $\tau$ taking values in $\mathcal{I}$, we have $h_\tau=h(\tau)$, a.s., which implies that
		\begin{equation}\label{4.7}
		\mathcal{E}[h(\tau)]=\mathcal{E}[h_\tau].
		\end{equation}
		For any stopping time $\tau\in\mathcal{S}_0$, we may construct a sequence of stopping times $\{\tau_n\}_{n\in\mathbb{N}}$ which takes values in $\mathcal{I}$, such that $\tau_n\downarrow\tau$. Noting that $\{h_t\}_{t\in[0,T]}$ is RCLL, then $h_{\tau_n}$ converges to $h_\tau$. It is obvious that $h_{\tau_n}\leq \esssup_{\tau\in\mathcal{S}_0}h(\tau)=:\eta$. Since $\{h(\tau),\tau\in\mathcal{S}_0\}$ is an $\mathcal{E}$-supermartingale system, we have
		\begin{displaymath}
			\sup_{\tau\in\mathcal{S}_0}\mathcal{E}[h(\tau)]\leq h(0)<\infty.
		\end{displaymath}
		Then by Lemma \ref{L4.1}, we obtain that $\eta\in\textrm{Dom}^+(\mathcal{E})$. Noting that  $\{h(\tau),\tau\in\mathcal{S}_0\}$ is RC$\mathcal{E}$ and applying the dominated convergence theorem \ref{P2.8}, we may check that
		\begin{equation}\label{4.8}
		\mathcal{E}[h(\tau)]=\lim_{n\rightarrow\infty}\mathcal{E}[h(\tau_n)]=\lim_{n\rightarrow\infty}\mathcal{E}[h_{\tau_n}]=\mathcal{E}[h_\tau].
		\end{equation}
		Assume that $P(h_\tau\neq h(\tau))>0$. Without loss of generality, we may assume that $P(A)>0$, where $A=\{h_\tau>h(\tau)\}$. Set $\tau_A=\tau I_A+TI_{A^c}$. It is easy to check that $\tau_A$ is a stopping time and $h_{\tau_A}\geq h(\tau_A)$ with $P(h_{\tau_A}>h(\tau_A))=P(A)>0$. It follows that $\mathcal{E}[h(\tau_A)]<\mathcal{E}[h_{\tau_A}]$, which contradicts Equation \eqref{4.8}. Therefore, we obtain that $h_\tau=h(\tau)$ for any $\tau\in\mathcal{S}_0$.
	\end{proof}

    With the help of  Proposition \ref{P4.1}, the value function $\{v(\tau),\tau\in\mathcal{S}_0\}$ can be aggregated as an RCLL $\mathcal{E}$-supermartingale.

    \begin{proposition}\label{P4.2}
    	Let $\{X(\tau),\tau\in\mathcal{S}_0\}$ be an RC$\mathcal{E}$ admissible family with $\sup_{\tau\in\mathcal{S}_0}\mathcal{E}[X(\tau)]<\infty$. Then, there exists an RCLL $\mathcal{E}$-supermartingale $\{v_t\}_{t\in[0,T]}$ which aggregates the family $\{v(S),S\in\mathcal{S}_0\}$ defined in \eqref{1.1}, i.e., for each stopping time $S$, $v(S)=v_S$, a.s.
    \end{proposition}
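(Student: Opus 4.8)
The plan is to obtain the aggregating process as a direct application of Proposition \ref{P4.1}, taking the $\mathcal{E}$-supermartingale system there to be the value function family itself, $h(\tau)=v(\tau)$. The whole task then reduces to checking that $\{v(\tau),\tau\in\mathcal{S}_0\}$ meets the four hypotheses of that proposition: nonnegativity, the $\mathcal{E}$-supermartingale system property, right-continuity in $\mathcal{E}$-expectation, and finiteness at the origin.

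First I would record nonnegativity: since $\{X(\tau),\tau\in\mathcal{S}_0\}$ is admissible, each $X(\tau)\in\textrm{Dom}^+_\tau(\mathcal{E})$ is nonnegative, so by monotonicity $\mathcal{E}_S[X(\tau)]\geq 0$ for every $\tau\in\mathcal{S}_S$, whence $v(S)\geq 0$. The $\mathcal{E}$-supermartingale system property is exactly Proposition \ref{P1}(ii), and the RC$\mathcal{E}$ property of $\{v(\tau),\tau\in\mathcal{S}_0\}$ is Proposition \ref{P3}, which applies because $\{X(\tau),\tau\in\mathcal{S}_0\}$ is RC$\mathcal{E}$ with $\sup_{\tau\in\mathcal{S}_0}\mathcal{E}[X(\tau)]<\infty$. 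Finally, Remark \ref{R1}(2) yields $\sup_{S\in\mathcal{S}_0}\mathcal{E}[v(S)]\leq\sup_{\tau\in\mathcal{S}_0}\mathcal{E}[X(\tau)]<\infty$ and hence $v(S)<\infty$ a.s. for every $S$; in particular $v(0)<\infty$.

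With all hypotheses verified, Proposition \ref{P4.1} produces an RCLL process $\{v_t\}_{t\in[0,T]}$ that is an $\mathcal{E}$-supermartingale and satisfies $v_\tau=v(\tau)$ a.s. for every $\tau\in\mathcal{S}_0$; specializing $\tau=S$ gives the claimed aggregation $v(S)=v_S$.

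I do not expect a genuine obstacle here, since the analytic difficulty has already been absorbed into Proposition \ref{P4.1} (where the RCLL modification along deterministic times supplied by Proposition \ref{P2.2} is upgraded to all stopping times via a dominated-convergence and strict-monotonicity argument) and into the regularity result Proposition \ref{P3}. The only point demanding care is to confirm that the RC$\mathcal{E}$ property established for $v$ in Proposition \ref{P3} is precisely the continuity hypothesis required by Proposition \ref{P4.1}, and that the integrability bound on $X$ is what simultaneously guarantees $v(0)<\infty$ and the applicability of Proposition \ref{P3}; once these alignments are in place the conclusion is immediate.
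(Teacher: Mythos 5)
Your proposal is correct and follows essentially the same route as the paper: both verify that $\{v(S),S\in\mathcal{S}_0\}$ is a nonnegative RC$\mathcal{E}$ $\mathcal{E}$-supermartingale system via Propositions \ref{P1} and \ref{P3}, check $v(0)<\infty$ from \eqref{1.3}, and then invoke Proposition \ref{P4.1}. No gaps.
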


    \begin{proof}
    	By Proposition \ref{P1} and \ref{P3}, $\{v(S),S\in\mathcal{S}_0\}$ is a nonnegative, RC$\mathcal{E}$ $\mathcal{E}$-supermartingale system. Recalling \eqref{1.3}, we have
    	\begin{displaymath}
    		v(0)=\mathcal{E}[v(0)]=\sup_{\tau\in\mathcal{S}_0}\mathcal{E}[X(\tau)]<\infty.
    	\end{displaymath}
    	The results follows from Proposition \ref{P4.1}.
    \end{proof}

    For the reward family $\{X(\tau),\tau\in\mathcal{S}_0\}$, since it is not an $\mathcal{E}$-supermartingale system, we cannot apply Proposition \ref{P4.1} to conclude that it can be aggregated. In order to do this, we need to require the following continuity property of the reward family.
	
	\begin{definition}[\cite{KQR}]
		An admissible family $\{X(\tau),\tau\in\mathcal{S}_0\}$ is said to be right-continuous along stopping times (RC) if for any $\tau\in\mathcal{S}_0$ and any sequence $\{\tau_n\}_{n\in\mathbb{N}}\subset\mathcal{S}_0$ such that $\tau_n\downarrow\tau$, one has $X(\tau)=\lim_{n\rightarrow\infty}X(\tau_n)$.
	\end{definition}

    \begin{remark}\label{R4}
    	If the admissible family $\{X(\tau),\tau\in\mathcal{S}_0\}$ is RC with $\sup_{\tau\in\mathcal{S}_0}\mathcal{E}[X(\tau)]<\infty$, then it is RC$\mathcal{E}$. Indeed, Let $\{\tau_n\}_{n\in\mathbb{N}}\subset\mathcal{S}_0$ be a sequence of stopping times such that $\tau_n\downarrow\tau$, a.s. By Lemma \ref{L4.1}, the random variable $\eta:=\esssup_{\tau\in\mathcal{S}_0}X(\tau)$ belongs to $\textrm{Dom}^+(\mathcal{E})$. Since $X(\tau_n)\leq \eta$, applying the dominated convergence theorem \ref{P2.8} implies that
    	\begin{displaymath}
    		\mathcal{E}[X(\tau)]=\lim_{n\rightarrow\infty}\mathcal{E}[X(\tau_n)].
    	\end{displaymath}
    \end{remark}
	
	The following theorem obtained in \cite{KQR} is used to aggregate the reward family.
	\begin{theorem}\label{T4.1}[\cite{KQR}]
		Suppose that the admissible family $\{X(\tau),\tau\in\mathcal{S}_0\}$ is right-continuous along stopping times. Then, there exists a progressively process $\{X_t\}_{t\in[0,T]}$ such that for each $\tau\in\mathcal{S}_0$, $X(\tau)=X_\tau$, a.s. and such that there exists a nonincreasing sequence of right-continuous processes $\{X^n_t\}_{t\in[0,T]}$ such that for each $(t,\omega)\in[0,T]\times\Omega$, $\lim_{n\rightarrow\infty}X_t^n(\omega)=X_t(\omega)$.
	\end{theorem}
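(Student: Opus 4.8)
The statement is a purely measure-theoretic aggregation result (the nonlinear expectation enters only through the finiteness input), so the plan is to reconstruct a process from the countable family of values at dyadic times and then identify it with $X(\tau)$ for every stopping time. Write $\mathbb{D}_n=\{kT2^{-n}:0\le k\le 2^n\}$ and $\mathbb{D}=\bigcup_n\mathbb{D}_n$, and note that every $s\in\mathbb{D}$ is a deterministic stopping time, so each $X(s)$ is $\mathcal{F}_s$-measurable and the whole family is determined, through the compatibility condition and right-continuity, by the countable collection $\{X(s):s\in\mathbb{D}\}$. Since $\sup_{\tau}\mathcal{E}[X(\tau)]<\infty$, Lemma \ref{L4.1} gives $\eta:=\esssup_{\tau\in\mathcal{S}_0}X(\tau)\in\textrm{Dom}^+(\mathcal{E})$, hence $\eta<\infty$ a.s.; this supplies both the a.s.\ finiteness of the candidate limit and the domination needed to pass to limits.

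First I would introduce, for $t\in[t^n_k,t^n_{k+1})$, the frozen process $X^n_t:=X(t^n_{k+1})$ with $X^n_T:=X(T)$, which is right-continuous in $t$, jointly measurable, and $\mathcal{F}_{t^n_{k+1}}$-measurable; for fixed deterministic $t$ one has $t^n_{k+1}\downarrow t$, so right-continuity along stopping times yields $X^n_t\to X(t)$ a.s. To obtain a \emph{nonincreasing} sequence I would instead look to the right through nested windows and set
\begin{displaymath}
\bar X^n_t:=\sup\{X(s):s\in\mathbb{D},\ t<s\le t+T2^{-n}\},
\end{displaymath}
which is nonincreasing in $n$ because the index set shrinks, and for which right-continuity along stopping times forces $\bar X^n_t\downarrow X(t)$ at every deterministic $t$. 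The candidate aggregator is $X_t:=\lim_n\bar X^n_t=\inf_n\bar X^n_t$, an everywhere-defined pointwise decreasing limit. A key structural point is that, although each $\bar X^n_t$ is only $\mathcal{F}_{t+T2^{-n}}$-measurable, the limit is $\bigcap_m\mathcal{F}_{t+T2^{-m}}=\mathcal{F}_{t+}=\mathcal{F}_t$-measurable by right-continuity of the filtration, so $X$ is adapted, and as a pointwise limit of measurable processes it is measurable, hence admits a progressive version.

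Next I would prove the aggregation identity $X_\tau=X(\tau)$ a.s.\ for every $\tau\in\mathcal{S}_0$. For a general stopping time one approximates from the right by the dyadic-valued stopping times $\tau_k:=\inf\{s\in\mathbb{D}_k:s\ge\tau\}\downarrow\tau$; on these the frozen values are exact, and right-continuity along stopping times gives $X(\tau_k)\to X(\tau)$ a.s. Combining this with the definition of $X$ and with the domination by $\eta$ (through the dominated convergence theorem, Proposition \ref{P2.8}) allows me to pass to the limit and identify $X_\tau$ with $X(\tau)$. Then $X=\inf_n\bar X^n$ displays $X$ as the everywhere-defined, nonincreasing limit of the processes $\bar X^n$, which is the second assertion of the theorem.

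The hard part is the regularity bookkeeping in the second step: one must ensure that the approximating processes can genuinely be taken right-continuous in $t$ while remaining nonincreasing in $n$ and convergent for \emph{every} $(t,\omega)$ (not merely at deterministic times), and that the limit is progressively measurable even though right-continuity is a forward-looking property. The sliding suprema $\bar X^n$ are nonincreasing and adapted in the limit, but verifying their right-continuity in $t$ simultaneously with monotonicity is the technical core, and may require replacing them by a right-continuous modification. The clean device for closing this gap is the optional (progressive) section theorem: two progressive processes that agree at every stopping time are indistinguishable, so once the aggregation identity at stopping times is in hand it upgrades the construction to a bona fide progressive aggregator, and the subtle distinction between right-continuity along stopping times (an almost-sure, sequence-by-sequence statement) and right-continuity of the paths is absorbed into this step.
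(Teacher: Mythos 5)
The paper does not actually prove this theorem; it is quoted from \cite{KQR}, so your proposal has to stand on its own. The decisive gap is the one-line claim that ``right-continuity along stopping times forces $\bar X^n_t\downarrow X(t)$''. Your $\bar X^n_t$ is a pointwise supremum of $X(s)$ over \emph{deterministic} dyadic $s$ in the window $(t,t+T2^{-n}]$, so a near-maximizer is an $\omega$-dependent dyadic time, and the random time it defines is in general not a stopping time: selecting it requires comparing the values $X(s')$ for $s'$ throughout the window, i.e.\ $\mathcal{F}_{t+T2^{-n}}$-information at earlier times. The hypothesis only controls $X(\tau_k)$ along fixed monotone sequences of \emph{stopping times}, so it says nothing directly about such suprema, and the inequality $\limsup_n\bar X^n_t\le X(t)$ --- the only nontrivial direction, and the heart of the whole theorem --- is left unproved. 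It is true, but proving it requires manufacturing stopping-time selectors: in \cite{KQR} the window supremum is taken as an \emph{essential supremum over stopping times} in the window, which is upward directed and hence attained along a sequence of stopping times that can be monotonized, squeezed onto $t$, and fed into the RC hypothesis together with the compatibility property; alternatively one can use first-entry times of the sets $\{X(s)\ge X(t)+\delta\}$ over finite dyadic grids. None of this appears in your argument, and the same missing upper bound recurs when you identify $X_\tau$ with $X(\tau)$: approximating $\tau$ from the right by dyadic-valued stopping times only yields the easy inequality $X_\tau\ge X(\tau)$.

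Second, your approximants do not have the properties the statement (and its later use in Theorem \ref{T4.2}) requires, and the proposed repairs cannot supply them. $\bar X^n$ is not adapted (it is only $\mathcal{F}_{(t+T2^{-n})\wedge T}$-measurable), and $t\mapsto\bar X^n_t$ is not right-continuous in general (the right edge of the window can pick up a cluster of large values the instant $t$ moves). Passing to a right-continuous or progressive \emph{modification} fixes each $t$ only up to a $t$-dependent null set, so it destroys exactly the two conclusions you must keep: convergence for every $(t,\omega)$ and the identity $X_\tau=X(\tau)$ at stopping times; and the section theorem, which compares two progressive processes, cannot restore right-continuity of the approximating sequence. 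Two smaller points: the theorem as stated carries no integrability hypothesis, so the appeal to $\sup_{\tau\in\mathcal{S}_0}\mathcal{E}[X(\tau)]<\infty$ and Lemma \ref{L4.1} is unavailable (and unnecessary, since RC here is almost-sure convergence and no dominated convergence argument is needed), and ``adapted plus jointly measurable'' does not by itself yield progressive measurability.
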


    Now, we could prove that, the optimal stopping time for the single stopping problem obtained in Section 2 can be represented as the first hitting time.
    \begin{theorem}\label{T4.2}
    	Suppose that the $\mathbb{F}$-expectation satisfies all the Assumptions (H0)-(H7). Let $\{X(\tau),\tau\in\mathcal{S}_0\}$ be an RC and LC$\mathcal{E}$ admissible family with $\sup_{\tau\in\mathcal{S}_0}\mathcal{E}[X(\tau)]<\infty$. Then for any $S\in\mathcal{S}_0$, the optimal stopping time of $v(S)$ defined by \eqref{1.5} can be given by a first hitting times. More precisely, let $\{X_t\}_{t\in[0,T]}$ be the progressive process given by Theorem \ref{T4.1} that aggregates $\{X(\tau),\tau\in\mathcal{S}_0\}$ and let $\{v_t\}_{t\in[0,T]}$ be the RCLL $\mathcal{E}$-supermartingale that aggregates the family $\{v(\tau),\tau\in\mathcal{S}_0\}$. Then the random variable defined by
    	\begin{equation}\label{4.2}
    		{\tau}(S)=\inf\{t\geq S:v_t=X_t\}
    	\end{equation}
    	is the minimal optimal stopping time for $v(S)$.
    \end{theorem}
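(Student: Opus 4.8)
The plan is to show that the hitting time $\tau(S)$ defined by \eqref{4.2} coincides almost surely with the minimal optimal stopping time $\tau^*(S)$ of \eqref{1.5}, whose existence, optimality and minimality are guaranteed by Theorem \ref{T1}. The hypotheses are exactly what is needed to set the stage: since $\{X(\tau),\tau\in\mathcal{S}_0\}$ is RC with finite $\mathcal{E}$-supremum, it is RC$\mathcal{E}$ by Remark \ref{R4}, and being also LC$\mathcal{E}$ it is C$\mathcal{E}$; hence Theorem \ref{T1} applies, Proposition \ref{P4.2} produces the RCLL $\mathcal{E}$-supermartingale $\{v_t\}$ aggregating $\{v(\tau),\tau\in\mathcal{S}_0\}$, and Theorem \ref{T4.1} produces the progressive process $\{X_t\}$ aggregating $\{X(\tau),\tau\in\mathcal{S}_0\}$ together with a nonincreasing sequence of right-continuous processes $\{X^n_t\}$ with $X^n_t(\omega)\downarrow X_t(\omega)$ for every $(t,\omega)$.

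First I would record two preliminary facts. Since each path $t\mapsto X_t(\omega)$ is a pointwise decreasing limit of right-continuous functions, it is upper semicontinuous from the right, i.e. $\limsup_{s\downarrow t}X_s(\omega)\le X_t(\omega)$ for every $(t,\omega)$. Secondly, $\tau(S)$ is genuinely a stopping time: the set $\{(t,\omega):t\ge S(\omega),\ v_t(\omega)=X_t(\omega)\}$ is progressively measurable because $v-X$ is progressive, so $\tau(S)$ is its d\'ebut and the d\'ebut theorem, valid under the usual conditions assumed on $\mathbb{F}$, makes it a stopping time. Moreover $v_T=X_T$ a.s., since $\mathcal{S}_T=\{T\}$ forces $v(T)=\mathcal{E}_T[X(T)]=X(T)$; thus the contact set is a.s.\ nonempty and $\tau(S)\le T$.

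Next I would prove the two inequalities. For $\tau(S)\le\tau^*(S)$: by Theorem \ref{T1} the time $\tau^*(S)$ is optimal, so Proposition \ref{P2} gives $v(\tau^*(S))=X(\tau^*(S))$, and aggregation yields $v_{\tau^*(S)}=X_{\tau^*(S)}$ a.s.; hence $\tau^*(S)$ is a contact time in $\mathcal{S}_S$ and $\tau(S)\le\tau^*(S)$ a.s. For the reverse inequality it suffices, by the essential-infimum definition \eqref{1.5}, to prove the key identity $v_{\tau(S)}=X_{\tau(S)}$ a.s.: this places $\tau(S)$ in the set $\{\tau\in\mathcal{S}_S:v(\tau)=X(\tau)\}$ and forces $\tau^*(S)\le\tau(S)$.

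The heart of the argument, and the step I expect to be the main obstacle, is establishing $v_{\tau(S)}=X_{\tau(S)}$, which is where the upper semicontinuity from the right of $X$ is indispensable. I would argue pathwise on a set of full measure on which $v$ is RCLL, $X$ is u.s.c.\ from the right, and $v_{\tau(S)}\ge X_{\tau(S)}$ (this last inequality holds a.s.\ because $\tau(S)$ is a stopping time and $v(\tau)\ge X(\tau)$ for all $\tau$ by Proposition \ref{P1}). If the infimum defining $\tau(S)$ is attained, equality is immediate. Otherwise there exist $t_k\downarrow\tau(S)$ with $t_k>\tau(S)$ and $v_{t_k}=X_{t_k}$, so right-continuity of $v$ gives $v_{\tau(S)}=\lim_k v_{t_k}=\lim_k X_{t_k}\le\limsup_{s\downarrow\tau(S)}X_s\le X_{\tau(S)}$, and combined with $v_{\tau(S)}\ge X_{\tau(S)}$ this yields the claimed identity. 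The argument cannot simply pass to a right-limit of $X$, since $X$ need not be right-continuous; only the one-sided upper semicontinuity survives, which is precisely why Theorem \ref{T4.1} is invoked in the form of a decreasing right-continuous approximation. Combining the two inequalities gives $\tau(S)=\tau^*(S)$ a.s., so $\tau(S)$ inherits from Theorem \ref{T1} the property of being the minimal optimal stopping time for $v(S)$.
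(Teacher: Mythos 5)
Your proposal is correct, but it reaches the conclusion by a genuinely different route from the paper. The paper re-runs the whole $\lambda$-penalization scheme of Section 2 at the level of the aggregated processes: it defines $\bar{\tau}^\lambda(S)=\inf\{t\geq S:\lambda v_t\leq X_t\}\wedge T$, lets $\lambda\uparrow 1$, and repeats the proofs of Lemma \ref{L1}, Lemma \ref{L2} and Theorem \ref{T1} verbatim with these hitting times in place of the essential infima; the only genuinely new ingredient is the pathwise inequality $\lambda v_{\bar{\tau}^\lambda(S)}\leq X_{\bar{\tau}^\lambda(S)}$ a.s., for which it cites Lemma 4.1 of \cite{KQR} --- and that lemma rests on exactly the right upper semicontinuity of $X$ that you extract from the decreasing right-continuous approximation in Theorem \ref{T4.1}. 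You instead short-circuit the penalization entirely by proving the a.s.\ identity $\tau(S)=\tau^*(S)$ with the essential-infimum optimal time already constructed in Theorem \ref{T1}: the inequality $\tau(S)\leq\tau^*(S)$ comes from transferring the contact condition $v(\tau^*(S))=X(\tau^*(S))$ through the aggregation, and the reverse from the pathwise identity $v_{\tau(S)}=X_{\tau(S)}$, obtained by combining right-continuity of $v$, right upper semicontinuity of $X$, and the a.s.\ domination $v_{\tau(S)}\geq X_{\tau(S)}$. Your case split (d\'ebut attained or approached strictly from the right) is sound, your derivation of right u.s.c.\ from the monotone limit of right-continuous processes is correct, and your measurability remarks (d\'ebut theorem under the usual conditions, $v_T=X_T$ a.s.\ so the contact set is nonempty) fill in details the paper leaves implicit. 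What each approach buys: yours is shorter and reuses Theorem \ref{T1} wholesale, making transparent that the hitting time is literally the same random variable as the essential infimum; the paper's version is self-contained at the process level and parallels \cite{KQR} directly, which is convenient for the citation it makes.
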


    \begin{proof}
    	For $\lambda\in(0,1)$, set
    	\begin{equation}\label{4.9}
    		\bar{\tau}^\lambda(S):=\inf\{t\geq S:\lambda v_t\leq X_t\}\wedge T.
    	\end{equation}
    	It is easy to check that the mapping $\lambda\mapsto \bar{\tau}^\lambda(S)$ is nondecreasing. Then the stopping time
    	\begin{displaymath}
    		\bar{\tau}(S)=\lim_{\lambda\uparrow 1}\bar{\tau}^\lambda(S)
    	\end{displaymath}
    	is well defined. The proof remains almost the same with the proofs of Lemma \ref{L1}, Lemma \ref{L2} and Theorem \ref{T1} if $\tau^\lambda(S)$, $\hat{\tau}(S)$ and $\tau^*(S)$ are replaced by $\bar{\tau}^\lambda(S)$, $\bar{\tau}(S)$ and $\tau(S)$ respectively except the proof for Equation \eqref{1.10}. In order to prove \eqref{1.10}, in the present setting, that is to prove the following inequality:
    	\begin{displaymath}
    		\lambda \mathcal{E}[v(\bar{\tau}^\lambda(S))]\leq \mathcal{E}[X(\bar{\tau}^\lambda(S))],
    	\end{displaymath}
    	it is suffient to verify that for each $S\in\mathcal{S}_0$ and $\lambda\in(0,1)$,
    	\begin{displaymath}
    		\lambda v_{\bar{\tau}^\lambda(S)}\leq X_{\bar{\tau}^\lambda(S)} ,\textrm{ a.s.}
    	\end{displaymath}
    	For the proof of this assertion, we may refer to Lemma 4.1 in \cite{KQR}. The proof is complete.
    \end{proof}

    In the following, we will show that the optimal stopping times for the multiple stopping problem can be given in terms of hitting times. For simplicity, we only consider the double stopping time problems. Let us first aggregate the value function.

    \begin{proposition}\label{P4.3}
    	Let $\{X(\tau,\sigma),\tau,\sigma\in\mathcal{S}_0\}$ be an RC$\mathcal{E}$ biadmissible family with $\sup_{\tau,\sigma\in\mathcal{S}_0}\mathcal{E}[X(\tau,\sigma)]<\infty$. Then, there exists an $\mathcal{E}$-supermartingale $\{v_t\}_{t\in[0,T]}$ with RCLL sample paths that aggregates the family $\{v(S),S\in\mathcal{S}_0\}$ defined by \eqref{2.1}, i.e., for each $S\in\mathcal{S}_0$, $v_S=v(S)$, a.s.
    \end{proposition}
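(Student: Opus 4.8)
The plan is to reduce the statement to a direct application of Proposition \ref{P4.1}, in exactly the way Proposition \ref{P4.2} handled the single stopping case. Accordingly, the whole argument consists in verifying that the double stopping value function $\{v(S),S\in\mathcal{S}_0\}$ defined by \eqref{2.1} satisfies the four hypotheses of Proposition \ref{P4.1}, namely that it is nonnegative, an RC$\mathcal{E}$ $\mathcal{E}$-supermartingale system with finite value at the origin.

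First I would record nonnegativity: since the biadmissible reward satisfies $X(\tau_1,\tau_2)\in\textrm{Dom}^+_{\tau_1\vee\tau_2}(\mathcal{E})$, each conditional evaluation $\mathcal{E}_S[X(\tau_1,\tau_2)]$ is nonnegative, hence so is the essential supremum $v(S)$. The $\mathcal{E}$-supermartingale property is then immediate from Proposition \ref{P5}(iii), and the RC$\mathcal{E}$ regularity of $\{v(S),S\in\mathcal{S}_0\}$ is precisely the content of Proposition \ref{P7}, which transfers the RC$\mathcal{E}$ property of the biadmissible family to its value function. For the finiteness at the origin I would invoke Proposition \ref{P5}(iv) at $S=0$: since $v(0)$ is $\mathcal{F}_0$-measurable one has $v(0)=\mathcal{E}[v(0)]=\sup_{\tau,\sigma\in\mathcal{S}_0}\mathcal{E}[X(\tau,\sigma)]<\infty$, the last bound being the standing integrability assumption of the proposition. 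With all four conditions verified, Proposition \ref{P4.1} yields an adapted RCLL $\mathcal{E}$-supermartingale $\{v_t\}_{t\in[0,T]}$ with $v_S=v(S)$ a.s. for every $S\in\mathcal{S}_0$, which is exactly the claimed aggregation.

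I do not expect a genuine obstacle, because every ingredient has already been established; the substance of the proof lies entirely in checking that the double stopping value function meets the abstract hypotheses of the aggregation lemma. The only point demanding a little care is the finiteness step, where one must use the identity of Proposition \ref{P5}(iv) at the deterministic time $0$ rather than attempt a pointwise bound, since $v(0)$ is controlled only through its $\mathcal{E}$-value; this mirrors verbatim the corresponding step in the proof of Proposition \ref{P4.2}.
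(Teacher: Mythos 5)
Your proposal is correct and follows the same route as the paper: the paper's proof likewise cites Proposition \ref{P5} for the $\mathcal{E}$-supermartingale property, Proposition \ref{P7} for the RC$\mathcal{E}$ regularity, and Remark \ref{R2} (which itself rests on Proposition \ref{P5}(iv)) for $v(0)<\infty$, before invoking Proposition \ref{P4.1}. Your explicit verification of nonnegativity and the finiteness step at $S=0$ just spells out what the paper leaves implicit.
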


    \begin{proof}
    	By Proposition \ref{P5} and \ref{P7}, the family $\{v(S),S\in\mathcal{S}_0\}$ is an $\mathcal{E}$-supermartingale system which is RC$\mathcal{E}$. Remark \ref{R2} implies that $v(0)<\infty$. Therefore, the result follows from Proposition \ref{P4.1}.
    \end{proof}

In order to aggregate the reward family obtained by \eqref{2.3}, by Theorem \ref{T4.1}, it suffices to show that it is RC. Since this new reward is defined by the value function of the single stopping problem corresponding to the biadmissible family, we need to assume that following regularity condition on the biadmissible family.
    \begin{definition}[\cite{KQR}]
    	A biadmissible family $\{X(\tau,\sigma),\tau,\sigma\in\mathcal{S}_0\}$ is said to be uniformly right-continuous along stopping times (URC) if $\sup_{\tau,\sigma\in\mathcal{S}_0}\mathcal{E}[X(\tau,\sigma)]<\infty$ and if for each nonincreasing sequence of stopping times $\{S_n\}_{n\in\mathbb{N}}\subset \mathcal{S}_S$ which converges a.s. to a stopping time $S\in\mathcal{S}_0$, one has
    	\begin{align*}
    		&\lim_{n\rightarrow\infty}[\esssup_{\tau\in\mathcal{S}_0}|X(\tau,S_n)-X(\tau,S)|]=0,\\
    		&\lim_{n\rightarrow\infty}[\esssup_{\sigma\in\mathcal{S}_0}|X(S_n,\sigma)-X(S,\sigma)|]=0.
    	\end{align*}
    \end{definition}

    \begin{theorem}\label{T4.3}
    	Suppose that there exists an $\mathbb{F}$-expectation $(\widetilde{\mathcal{E}},\textrm{Dom}(\widetilde{\mathcal{E}}))$ satisfying (H0)-(H5) that dominates $(\mathcal{E},\textrm{Dom}(\mathcal{E}))$. Let $\{X(\tau,\sigma),\tau,\sigma\in\mathcal{S}_0\}$ be a biadmissible family which is URC. Then, the family $\{\widetilde{X}(S),S\in\mathcal{S}_0\}$ defined by \eqref{2.3} is RC.
    \end{theorem}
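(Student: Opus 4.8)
The plan is to reduce the statement to the single summand $u_1$ and to prove its right‑continuity along stopping times by a two–step comparison that separates the dependence of the map $U_1(S,\theta)$ from \eqref{2.7} on its two arguments. Since $\widetilde{X}=\max\{u_1,u_2\}$ and the maximum of two a.s.\ convergent sequences converges to the maximum of the limits, it suffices to show that $\{u_1(\tau),\tau\in\mathcal{S}_0\}$ is RC (the case of $u_2$ being symmetric). Fix $\theta\in\mathcal{S}_0$ and a sequence $\theta_n\downarrow\theta$, and note that $u_1(\theta)=U_1(\theta,\theta)$ while $u_1(\theta_n)=U_1(\theta_n,\theta_n)$ with $\theta_n\in\mathcal{S}_\theta$. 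I would then write
\begin{displaymath}
u_1(\theta_n)-u_1(\theta)=\big[U_1(\theta_n,\theta_n)-U_1(\theta_n,\theta)\big]+\big[U_1(\theta_n,\theta)-U_1(\theta,\theta)\big]
\end{displaymath}
and show that each bracket tends to $0$ a.s.

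For the second bracket (fixed second argument $\theta$, moving first argument), observe that $\{X(\tau_1,\theta),\tau_1\in\mathcal{S}_\theta\}$ is an admissible family: for $\tau_1\geq\theta$ the variable $X(\tau_1,\theta)$ is $\mathcal{F}_{\tau_1}$-measurable, and it is RC because the second URC estimate applied to $\tau_1^{(m)}\downarrow\tau_1$ gives $\esssup_{\sigma}|X(\tau_1^{(m)},\sigma)-X(\tau_1,\sigma)|\to 0$ a.s., in particular at $\sigma=\theta$. By Remark~\ref{R4} this family is RC$\mathcal{E}$. Modifying it as in Theorem~\ref{t1} (replacing $X(\tau_1,\theta)$ by $X(\tau_1,\theta)I_{\{\tau_1\geq\theta\}}-I_{\{\tau_1<\theta\}}$, which is admissible on all of $\mathcal{S}_0$, bounded below and RC$\mathcal{E}$), its value function equals $U_1(\cdot,\theta)$ on $\mathcal{S}_\theta$, because for $S\geq\theta$ every $\tau_1\in\mathcal{S}_S$ satisfies $\tau_1\geq\theta$. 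After a constant shift, Proposition~\ref{P4.2} aggregates it into an RCLL $\mathcal{E}$-supermartingale $\{v'_t\}_{t\in[0,T]}$, and right-continuity of the paths yields $U_1(\theta_n,\theta)=v'_{\theta_n}\to v'_\theta=U_1(\theta,\theta)=u_1(\theta)$ a.s.

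For the first bracket (moving second argument), I would combine the domination inequality \eqref{0} with the monotonicity of $\widetilde{\mathcal{E}}$ to obtain the pointwise bound
\begin{displaymath}
\big|U_1(\theta_n,\theta_n)-U_1(\theta_n,\theta)\big|\leq\esssup_{\tau_1\in\mathcal{S}_{\theta_n}}\widetilde{\mathcal{E}}_{\theta_n}\big[|X(\tau_1,\theta_n)-X(\tau_1,\theta)|\big]\leq\widetilde{\mathcal{E}}_{\theta_n}[Z_n],
\end{displaymath}
where $Z_n:=\esssup_{\tau_1\in\mathcal{S}_{\theta_n}}|X(\tau_1,\theta_n)-X(\tau_1,\theta)|$. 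By the first URC estimate $Z_n\leq\esssup_{\tau_1\in\mathcal{S}_0}|X(\tau_1,\theta_n)-X(\tau_1,\theta)|\to 0$ a.s., and $0\leq Z_n\leq 2\eta$ with $\eta:=\esssup_{\tau,\sigma\in\mathcal{S}_0}X(\tau,\sigma)$; here $\eta\in\textrm{Dom}^+(\mathcal{E})$ by Lemma~\ref{L4.1}, and hence $\eta\in\textrm{Dom}^+(\widetilde{\mathcal{E}})$ since domination gives $\textrm{Dom}(\mathcal{E})\subset\textrm{Dom}(\widetilde{\mathcal{E}})$.

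The main obstacle is the final step: passing from $Z_n\to 0$ a.s.\ to $\widetilde{\mathcal{E}}_{\theta_n}[Z_n]\to 0$ a.s. This is not a plain dominated convergence statement, because the conditioning times $\theta_n$ move, with $\mathcal{F}_{\theta_n}$ decreasing to $\mathcal{F}_\theta$ by right-continuity of $\mathbb{F}$. I would handle it by setting $W_m:=\sup_{k\geq m}Z_k$, so that $W_m\downarrow 0$ a.s.\ with $0\leq W_m\leq 2\eta\in\textrm{Dom}^+(\widetilde{\mathcal{E}})$, using monotonicity to bound $\widetilde{\mathcal{E}}_{\theta_n}[Z_n]\leq\widetilde{\mathcal{E}}_{\theta_n}[W_m]$ for $n\geq m$, and then invoking a conditional (reverse) dominated convergence for $\widetilde{\mathcal{E}}$ along the decreasing stopping times to let first $n\to\infty$ and afterwards $m\to\infty$, the latter limit being supplied by the dominated convergence theorem of Proposition~\ref{P2.8}. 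This reverse convergence of the nonlinear conditional expectation in its conditioning time is the delicate technical point of the whole argument. Combining the two brackets yields $u_1(\theta_n)\to u_1(\theta)$ a.s., and symmetrically for $u_2$, so $\widetilde{X}=\max\{u_1,u_2\}$ is RC.
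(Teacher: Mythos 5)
Your proposal is correct and follows essentially the same route as the paper: the same splitting of $u_1(\theta_n)-u_1(\theta)$ into a fixed-second-argument term handled by aggregating $U_1(\cdot,\theta)$ into an RCLL process (Proposition \ref{P4.2}) and a moving-second-argument term bounded via the domination inequality \eqref{0} by $\widetilde{\mathcal{E}}_{\theta_n}[W_m]$ with $W_m=\sup_{k\geq m}Z_k\downarrow 0$, followed by the two-stage limit $n\to\infty$ then $m\to\infty$ using Proposition \ref{P2.8}. The only step you flag as delicate --- the convergence $\widetilde{\mathcal{E}}_{\theta_n}[W_m]\to\widetilde{\mathcal{E}}_{\theta}[W_m]$ as $\theta_n\downarrow\theta$ with $W_m$ fixed --- requires no new dominated-convergence argument: it is immediate from the right-continuity of the paths of $t\mapsto\widetilde{\mathcal{E}}_t[\xi]$ for fixed $\xi\in\textrm{Dom}^+(\widetilde{\mathcal{E}})$, which is built into the definition of the conditional $\mathbb{F}$-expectation at stopping times, and this is exactly how the paper closes the argument.
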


    \begin{proof}
    	By the expression of $\widetilde{X}$, it is sufficient to prove that the family $\{u_1(\tau),\tau\in\mathcal{S}_0\}$ is RC. For any $\tau,\sigma\in\mathcal{S}_0$, we define
    	\begin{equation}\label{4.3}
    		U_1(\tau,\sigma)=\esssup_{\tau_1\in\mathcal{S}_\tau}\mathcal{E}_\tau[X(\tau_1,\sigma)].
    	\end{equation}
    	Since $u_1(\tau)=U_1(\tau,\tau)$, it remains to prove that $\{U_1(\tau,\sigma),\tau,\sigma\in\mathcal{S}_0\}$ is RC.
    	
    	Now let $\{\tau_n\}_{n\in\mathbb{N}},\{\sigma_n\}_{n\in\mathbb{N}}$ be two nonincreasing sequence of stopping times that converges to $\tau$ and $\sigma$ respectively. It is easy to check that
    	\begin{equation}\label{4.0}
    		|U_1(\tau,\sigma)-U_1(\tau_n,\sigma_n)|\leq |U_1(\tau,\sigma)-U_1(\tau_n,\sigma_n)|+|U_1(\tau_n,\sigma)-U_1(\tau_n,\sigma_n)|.
    	\end{equation}
    	
    	It is obvious that for each fixed $\sigma\in\mathcal{S}_0$, the family $\{X(\tau,\sigma),\tau\in\mathcal{S}_0\}$ is RC. By Remark \ref{R4}, this family is also RC$\mathcal{E}$. Note that $\{U_1(\tau,\sigma),\tau\in\mathcal{S}_0\}$ can be regarded as the value function of the single optimal stopping problem associated with the reward $\{X(\tau,\sigma),\tau\in\mathcal{S}_0\}$. Although the reward family $\{X(\tau,\sigma),\tau\in\mathcal{S}_0\}$ may not be admissible due to the lack of adaptedness, i.e., $X(\tau,\sigma)$ is not $\mathcal{F}_\tau$-measurable if $\tau<\sigma$, Remarks \ref{r2} and \ref{r1} imply that  $\{U_1(\tau,\sigma),\tau\in\mathcal{S}_0\}$ is an $\mathcal{E}$-supermartingale which is RC$\mathcal{E}$. By Proposition \ref{P4.2}, we obtain that there exists an RCLL adapted process $\{U_t^{1,\sigma}\}_{t\in[0,T]}$ such that for each stopping time $\tau\in\mathcal{S}_0$,
    	\begin{equation}\label{4.4}
    		U^{1,\sigma}_\tau=U_1(\tau,\sigma).
    	\end{equation}
    	Hence, the first part of the right-hand side of \eqref{4.0} can be written as $|U^{1,\sigma}_\tau-U^{1,\sigma}_{\tau_n}|$. Due to the right-continuity of $\{U_t^{1,\sigma}\}_{t\in[0,T]}$, it converges to $0$ as $n$ goes to infinity.
    	
    	For any $m\in\mathbb{N}$, set $Z_m=\sup_{r\geq m}\{\esssup_{\tau\in\mathcal{S}_0}|X(\tau,\sigma)-X(\tau,\sigma_r)|\}$. It is easy to check that
    	\begin{displaymath}
    		0\leq Z_m\leq 2\esssup_{\tau,\sigma\in\mathcal{S}_0} X(\tau,\sigma)=:\eta.
    	\end{displaymath}
    	A similar analysis as the proof of Lemma \ref{L4.1} shows that $\eta\in\textrm{Dom}^+(\mathcal{E})$. Therefore, $Z_m\in\textrm{Dom}^+(\mathcal{E})$ for any $m\in\mathbb{N}$. By simple calculation, for any $n\geq m$, we have
    	\begin{align*}
    		|U_1(\tau_n,\sigma)-U_1(\tau_n,\sigma_n)|\leq &\esssup_{\tau_1\in\mathcal{S}_{\tau_n}}|\mathcal{E}_{\tau_n}[X(\tau_1,\sigma)]-\mathcal{E}_{\tau_n}[X(\tau_1,\sigma_n)]|\\
    			\leq &\esssup_{\tau_1\in\mathcal{S}_{\tau_n}}\widetilde{\mathcal{E}}_{\tau_n}[|X(\tau_1,\sigma)-X(\tau_1,\sigma_n)|]\\
    				\leq &\widetilde{\mathcal{E}}_{\tau_n}[Z_m].
    	\end{align*}
    	Since for any $\xi\in \textrm{Dom}^+(\mathcal{E})$, the family $\{\widetilde{\mathcal{E}}_t[\xi]\}_{t\in[0,T]}$ is right-continuous, it follows that for any $m\in\mathbb{N}$,
    	\begin{equation}\label{4.5}
    		\limsup_{n\rightarrow\infty}|U_1(\tau_n,\sigma)-U_1(\tau_n,\sigma_n)|\leq \widetilde{\mathcal{E}}_{\tau}[Z_m].
    	\end{equation}
    	Note that $Z_m$ converges to $0$ as $m$ goes to infinity. By the dominated convergence theorem \ref{P2.8}, letting $m$ go to infinity in \eqref{4.5}, we obtain that the second term of the right-hand side of \eqref{4.0} converges to $0$. The proof is complete.
    \end{proof}

    Combining Theorems \ref{T4.1} and \ref{T4.3}, we may get the following aggregation result.
    \begin{corollary}\label{C4.1}
    	Under the same hypothesis as Theorem \ref{T4.3}, there exists some progressive right-continuous adapted process $\{\widetilde{X}_t\}_{t\in[0,T]}$ which aggregates the family $\{\widetilde{X}(\tau),\tau\in\mathcal{S}_0\}$, i.e., for any $\tau\in\mathcal{S}_0$, $\widetilde{X}_\tau=\widetilde{X}(\tau)$ a.s. and such that there exists a nonincreasing sequence of right-continuous processes $\{\widetilde{X}_t^n\}_{t\in[0,T]}$ that converges to $\{\widetilde{X}_t\}_{t\in[0,T]}$.
    \end{corollary}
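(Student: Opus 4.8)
The plan is to read this off directly by stitching together the two preceding results, since the corollary is precisely the conjunction of the regularity statement in Theorem \ref{T4.3} and the abstract aggregation theorem \ref{T4.1}. First I would invoke Theorem \ref{T4.3}: under the stated hypotheses (existence of a dominating $\mathbb{F}$-expectation $(\widetilde{\mathcal{E}},\textrm{Dom}(\widetilde{\mathcal{E}}))$ satisfying (H0)--(H5) together with the URC property of the biadmissible family $\{X(\tau,\sigma),\tau,\sigma\in\mathcal{S}_0\}$), the induced family $\{\widetilde{X}(\tau),\tau\in\mathcal{S}_0\}$ defined by \eqref{2.3} is right-continuous along stopping times (RC). This is the only genuinely new analytic input, and it has already been established.

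Next I would record that $\{\widetilde{X}(\tau),\tau\in\mathcal{S}_0\}$ is an admissible family, which is exactly the content of the lemma proved immediately after \eqref{2.4}: there $\widetilde{X}(\tau)=\max\{u_1(\tau),u_2(\tau)\}$ is shown to be $\mathcal{F}_\tau$-measurable, to lie in $\textrm{Dom}^+(\mathcal{E})$, and to satisfy the compatibility condition $\widetilde{X}(\tau)=\widetilde{X}(\sigma)$ on $\{\tau=\sigma\}$. Thus the two structural hypotheses required by Theorem \ref{T4.1}, namely admissibility and right-continuity along stopping times, are both in force for $\{\widetilde{X}(\tau),\tau\in\mathcal{S}_0\}$.

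Finally I would apply Theorem \ref{T4.1} to this family. It yields a progressive process $\{\widetilde{X}_t\}_{t\in[0,T]}$ with $\widetilde{X}_\tau=\widetilde{X}(\tau)$ a.s.\ for every $\tau\in\mathcal{S}_0$, together with a nonincreasing sequence of right-continuous processes $\{\widetilde{X}_t^n\}_{t\in[0,T]}$ converging pointwise to $\{\widetilde{X}_t\}_{t\in[0,T]}$, which is exactly the assertion of the corollary. There is essentially no obstacle here beyond bookkeeping: the whole difficulty was already absorbed into proving RC in Theorem \ref{T4.3}, where the domination inequality \eqref{0} and the URC hypothesis were used to control $|U_1(\tau_n,\sigma)-U_1(\tau_n,\sigma_n)|$ by $\widetilde{\mathcal{E}}_{\tau_n}[Z_m]$. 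The one point I would double-check is that Theorem \ref{T4.1} is applied to the family $\{\widetilde{X}(\tau),\tau\in\mathcal{S}_0\}$ itself, and not to the two-parameter device $U_1(\cdot,\cdot)$ used inside its proof, so that the aggregating process and its approximating sequence are indexed by the single time parameter $t$, as claimed.
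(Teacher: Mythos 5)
Your proposal is correct and is exactly the paper's argument: the paper derives Corollary \ref{C4.1} by combining Theorem \ref{T4.3} (which gives the RC property of $\{\widetilde{X}(\tau),\tau\in\mathcal{S}_0\}$) with the aggregation result of Theorem \ref{T4.1}, the admissibility of the family having been established in the lemma following \eqref{2.3}. Your closing remark that Theorem \ref{T4.1} must be applied to the one-parameter family $\{\widetilde{X}(\tau),\tau\in\mathcal{S}_0\}$ rather than to $U_1(\cdot,\cdot)$ is the right sanity check and matches the paper's usage.
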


    \begin{theorem}\label{T4.4}
    	 Suppose that the $\mathbb{F}$-expectation $(\mathcal{E},\textrm{Dom}(\mathcal{E}))$ satisfies all Assumptions (H0)-(H7) and the biadmissible family  $\{X(\tau,\sigma),\tau,\sigma\in\mathcal{S}_0\}$ is URC and ULC${\mathcal{E}}$. Then, the optimal stopping time for the value function defined by \eqref{2.1} can be given in term of some first hitting times.
    \end{theorem}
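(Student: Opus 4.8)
The plan is to reduce the double stopping problem to two nested layers of single stopping problems, each handled by the hitting-time representation of Theorem \ref{T4.2}, and then to reassemble the optimal pair via the gluing construction of Proposition \ref{P6}. The first task is to certify that the reduced reward family $\{\widetilde{X}(\tau),\tau\in\mathcal{S}_0\}$ from \eqref{2.3} carries both the pathwise and the $\mathcal{E}$-regularity needed to aggregate it and its value function. Since $(\mathcal{E},\textrm{Dom}(\mathcal{E}))$ satisfies (H6) it is dominated by itself, so I may take $\widetilde{\mathcal{E}}=\mathcal{E}$ in Theorem \ref{T4.3}; the URC hypothesis then yields that $\{\widetilde{X}(\tau),\tau\in\mathcal{S}_0\}$ is RC, and Corollary \ref{C4.1} produces a progressive process $\{\widetilde{X}_t\}_{t\in[0,T]}$ aggregating it. Simultaneously, URC implies URC$\mathcal{E}$: applying the dominated convergence theorem to $Z_n:=\esssup_\tau|X(\tau,S_n)-X(\tau,S)|\downarrow 0$, which is bounded by $\eta=2\esssup_{\tau,\sigma}X(\tau,\sigma)\in\textrm{Dom}^+(\mathcal{E})$ by Lemma \ref{L4.1}. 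Combined with the assumed ULC$\mathcal{E}$, the family is UC$\mathcal{E}$, so Theorem \ref{t2} gives that $\{\widetilde{X}(\tau),\tau\in\mathcal{S}_0\}$ is LC$\mathcal{E}$.

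With $\widetilde{X}$ now both RC and LC$\mathcal{E}$, I would aggregate the value function $v=u$ (they agree by Theorem \ref{T2}) into an RCLL $\mathcal{E}$-supermartingale $\{v_t\}_{t\in[0,T]}$ via Proposition \ref{P4.3}, and apply Theorem \ref{T4.2} to the single stopping problem \eqref{2.4}. This shows that
\begin{displaymath}
\theta^*(S)=\inf\{t\geq S:v_t=\widetilde{X}_t\}
\end{displaymath}
is the minimal optimal stopping time for $u(S)=v(S)$, which supplies the outer coordinate $\theta^*$ of the construction in Proposition \ref{P6}.

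For the inner layer, fix $\theta^*=\theta^*(S)$ and consider the one-parameter families $\{X(\theta,\theta^*),\theta\in\mathcal{S}_{\theta^*}\}$ and $\{X(\theta^*,\theta),\theta\in\mathcal{S}_{\theta^*}\}$. Taking $\sigma=\theta^*$ in the URC condition shows each is RC, while taking $\tau=\theta^*$ in the ULC$\mathcal{E}$ condition, together with the self-domination inequality $|\mathcal{E}[\cdot]-\mathcal{E}[\cdot]|\leq\mathcal{E}[|\cdot-\cdot|]$ valid under (H6), shows each is LC$\mathcal{E}$. Hence I would again aggregate these rewards (Theorem \ref{T4.1}) and the corresponding value functions $v_1,v_2$ (Proposition \ref{P4.2}), and invoke Theorem \ref{T4.2} once more to represent the minimal optimal times $\theta_1^*,\theta_2^*$ of $v_1(\theta^*),v_2(\theta^*)$ as first hitting times above $\theta^*$. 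Finally, gluing on the set $B=\{v_1(\theta^*)\leq v_2(\theta^*)\}=\{u_1(\theta^*)\leq u_2(\theta^*)\}$ exactly as in \eqref{2.5} and the proof of Theorem \ref{T4}, the resulting pair $(\tau_1^*,\tau_2^*)$ is optimal for $v(S)$ and is expressed entirely through hitting times.

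The step I expect to be the main obstacle is this inner layer: because $\theta^*$ is itself a random stopping time, the auxiliary reward families, their aggregating processes, and the hitting times $\theta_1^*,\theta_2^*$ all depend on $\theta^*$, so the clean statement of Theorem \ref{T4.2}, phrased for families indexed by $\mathcal{S}_0$, must be transported to families indexed by $\mathcal{S}_{\theta^*}$ with the progressive aggregation carried out while $\theta^*$ is frozen. Verifying that the $\lambda$-penalization argument behind Theorem \ref{T4.2} (in particular the key inequality $\lambda v_{\bar\tau^\lambda}\leq X_{\bar\tau^\lambda}$, i.e. Lemma 4.1 in \cite{KQR}) survives this localization, and that the two inner hitting times glue consistently with $B$, is where the genuine care is required; the regularity transfers themselves are routine once the earlier results are in place.
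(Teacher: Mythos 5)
Your proposal follows essentially the same route as the paper: reduce to the single stopping problem for $\{\widetilde{X}(\tau),\tau\in\mathcal{S}_0\}$, establish RC and LC$\mathcal{E}$ via Theorems \ref{T4.3} and \ref{t2}, aggregate the reward and the value function, represent $\theta^*$ as a first hitting time through Theorem \ref{T4.2}, and then repeat for the inner families before gluing with Proposition \ref{P6}. The single step you flag as the main obstacle---transporting Theorem \ref{T4.2} to the inner families indexed by $\mathcal{S}_{\theta^*}$---is resolved in the paper exactly where you anticipate the care is needed, namely by extending the inner rewards to all of $\mathcal{S}_0$ via $\widetilde{X}^{(i)}(\theta)=X^{(i)}(\theta)I_{\{\theta\geq\theta^*\}}-I_{\{\theta<\theta^*\}}$ and invoking the modified single-stopping results of Theorem \ref{t1}, so that the hitting times $\theta^*_i=\inf\{t\geq\theta^*:\widetilde{v}^i_t=\widetilde{X}^{(i)}_t\}$ are optimal for $u_i(\theta^*)$.
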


    \begin{proof}
    	Let $\{\widetilde{X}(\tau),\tau\in\mathcal{S}_0\}$ be the new reward family given by \eqref{2.3}. By Theorem \ref{t2} and Theorem \ref{T4.3}, it is LC$\mathcal{E}$ and RC. Applying Theorem \ref{T4.1}, there exists a progressively measurable process $\{\widetilde{X}_t\}_{t\in[0,T]}$ which aggregates this family. Let $\{u_t\}_{t\in[0,T]}$ be an RCLL process that aggregates the value function defined as \eqref{2.4} which corresponds to the reward family   $\{\widetilde{X}(\tau),\tau\in\mathcal{S}_0\}$ by Proposition \ref{P4.2}. Then Theorem \ref{T4.2} implies that, for any $S\in\mathcal{S}_0$, the stopping time
    	\begin{displaymath}
    		\theta^*=\inf\{t\geq S:u_t=\widetilde{X}_t\}
    	\end{displaymath}
    	is optimal for $u(S)$.
    	
    	For each $\theta\in\mathcal{S}_{\theta^*}$, set $X^{(1)}(\theta)=X(\theta,\theta^*)$ and $X^{(2)}(\theta)=X(\theta^*,\theta)$. For $i=1,2$, it is obvious that the family $\{X^{(i)}(\theta),\theta\in\mathcal{S}_{\theta^*}\}$ is admissible, RC and LC${\mathcal{E}}$. In order to aggregate this family using Theorem \ref{T4.1}, we need to extend its defintion to all stopping times $\theta\in\mathcal{S}_0$. One of the candidates is
    	\begin{displaymath}
    		\widetilde{X}^{(i)}(\theta)=X^{(i)}(\theta)I_{\{\theta\geq \theta^*\}}-I_{\{\theta<\theta^*\}}.
    	\end{displaymath}
    	It is easy to check that the family $\{\widetilde{X}^{(i)}(\theta),\theta\in\mathcal{S}_0\}$ is admissible, RC and left-continuous in expectation along stopping times greater than $\theta^*$. By Theorem \ref{T4.1}, there exists a progressive process $\{\widetilde{X}^{(i)}_t\}_{t\in[0,T]}$ that aggregates $\{\widetilde{X}^{(i)}(\theta),\theta\in\mathcal{S}_{0}\}$. Consider the following value function
    	\begin{displaymath}
    		\widetilde{v}^{(i)}(S)=\esssup_{\tau\in\mathcal{S}_S}\mathcal{E}_S[\widetilde{X}^{(i)}(\tau)].
    	\end{displaymath}
    	Applying Theorem \ref{t1}, we obtain that the family $\{\widetilde{v}^{(i)}(S),S\in\mathcal{S}_0\}$ is an RC$\mathcal{E}$ $\mathcal{E}$-supermartingale system. Furthermore, for any $S\geq \theta^*$, we have $\widetilde{v}^{(i)}(S)=u_i(S)$, where $u_i$ is defined by \eqref{2.2}. By Proposition \ref{P4.2}, there exists an RCLL process $\{\widetilde{v}^i_t\}_{t\in[0,T]}$ that aggregates the  family $\{\widetilde{v}^{(i)}(S),S\in\mathcal{S}_{0}\}$. Now, we define
    	\begin{displaymath}
    		\theta^*_i=\inf\{t\geq \theta^*:\widetilde{v}^i_t=\widetilde{X}^{(i)}_t\}.
    	\end{displaymath}
    	By a similar analysis as the proof of Theorem \ref{t1}, Theorem \ref{T4.2} still holds for the reward family given by $\{\widetilde{X}^{(i)}(\theta),\theta\in\mathcal{S}_0\}$, which implies that the stopping time $\theta^*_i$ is optimal for $\widetilde{v}_i(\theta^*)$, and then optimal for $u_i(\theta^*)$. Now, set $B=\{u_1(\theta^*)\leq u_2(\theta^*)\}=\{\widetilde{v}^{(1)}(\theta^*)\leq \widetilde{v}^{(2)}(\theta^*)\}=\{\widetilde{v}^1_{\theta^*}\leq \widetilde{v}^2_{\theta^*}\}$. By Proposition \ref{P6}, the pair of stopping times $(\tau_1^*,\tau_2^*)$ given by
    	\begin{displaymath}
    		\tau_1^*=\theta^* I_B+\theta_1^* I_{B^c}, \ \tau_2^*=\theta^*_2I_B+\theta^*I_{B^c}
    	\end{displaymath}
    	is optimal for $v(S)$. The proof is complete.
    \end{proof}

	\appendix
	\renewcommand\thesection{Appendix A}
	\section{ }
	\renewcommand\thesection{A}
	
	In this section, we will recall some basic notations and  properites of the so-called ``$\mathbb{F}$-expectation" introduced in \cite{BY1}. Roughly speaking, the $\mathbb{F}$-expectation is a nonlinear expectation defined on a subspace of $L^0(\mathcal{F}_T)$, which satisfies the following algebraic properties.
	
	\begin{definition}\label{D2.1}
		Let $\mathscr{D}_T$ denote the collection of all non-empty subsets $\Lambda$ of $L^0(\mathcal{F}_T)$ satisfying:
		\begin{description}
			\item[(D1)] $0,1\in\Lambda$;
			\item[(D2)] for any $\xi,\eta\in\Lambda$ and $A\in\mathcal{F}_T$, both $\xi+\eta$ and $I_A \xi$ belong to $\Lambda$;
			\item[(D3)] for any $\xi,\eta\in L^0(\mathcal{F}_T)$ with $0\leq \xi\leq \eta$, a.s., if $\eta\in\Lambda$, then $\xi\in\Lambda$.
		\end{description}
	\end{definition}
	
	\begin{definition}\label{D2.2}
		An $\mathbb{F}$-consistent nonlinear expectation ($\mathbb{F}$-expectation for short) is a pair $(\mathcal{E},\Lambda)$ in which $\Lambda\in\mathscr{D}_T$ and $\mathcal{E}$ denotes a family of operators $\{\mathcal{E}_t[\cdot]:\Lambda\mapsto\Lambda_t:=\Lambda\cap L^0(\mathcal{F}_t)\}_{t\in[0,T]}$ satisfying the following hypothesis for any $\xi,\eta\in\Lambda$ and $t\in[0,T]$:
		\begin{description}
			\item[(A1)] ``Monotonicity (positively strict)": $\mathcal{E}_t[\xi]\leq \mathcal{E}_t[\eta]$, a.s. if $\xi\leq \eta$, a.s.; Moreover, if $0\leq \xi\leq \eta$ a.s. and $\mathcal{E}_0[\xi]=\mathcal{E}_0[\eta]$, then $\xi=\eta$, a.s.;
			\item[(A2)] ``Time consistency": $\mathcal{E}_s[\mathcal{E}_t[\xi]]=\mathcal{E}_s[\xi]$, a.s. for any $0\leq s\leq t\leq T$;
			\item[(A3)] ``Zero-one law": $\mathcal{E}_t[\xi I_A]=\mathcal{E}_t[\xi]I_A$, a.s. for any $A\in\mathcal{F}_t$;
			\item[(A4)] ``Translation invariance": $\mathcal{E}_t[\xi+\eta]=\mathcal{E}_t[\xi]+\eta$, a.s. if $\eta\in\Lambda_t$.
		\end{description}
	\end{definition}
	
	For notional simplicity, we will substitute $\mathcal{E}[\cdot]$ for $\mathcal{E}_0[\cdot]$. We denote the domain $\Lambda$ by Dom$(\mathcal{E})$ and introduce the following subsets of Dom$(\mathcal{E})$:
	\begin{align*}
	\textrm{Dom}_\tau(\mathcal{E})&:=\textrm{Dom}(\mathcal{E})\cap L^0(\mathcal{F}_\tau), \ \forall \tau\in\mathcal{S}_0,\\
	\textrm{Dom}^+(\mathcal{E})&:=\{\xi\in \textrm{Dom}(\mathcal{E}):\xi\geq 0, \textrm{ a.s.}\},\\
	\textrm{Dom}^{*}(\mathcal{E})&:=\{\xi\in \textrm{Dom}(\mathcal{E}):\xi\geq c, \textrm{ a.s. for some }c=c(\xi)\in\mathbb{R}\}.
	\end{align*}
	
	\begin{definition}\label{D2.3}
		\begin{description}
			\item[(1)] An $\mathbb{F}$-adapted process $X=\{X_t\}_{t\in[0,T]}$ is called an ``$\mathcal{E}$-process" if $X_t\in$Dom$(\mathcal{E})$, for any $t\in[0,T]$;
			\item[(2)] An $\mathcal{E}$-process is said to be an $\mathcal{E}$-supermartingale (resp., $\mathcal{E}$-martingale, $\mathcal{E}$-submartingale) if for any $0\leq s\leq t\leq T$, $\mathcal{E}_s[X_t]\leq (\textrm{resp. } =,\geq) X_s$, a.s.
		\end{description}
	\end{definition}
	
	For any $\mathbb{F}$-adapted process $X$, its right-limit process is defined as follows:
	\begin{displaymath}
	X_t^+:=\liminf_{n\rightarrow\infty}X_{q_n^+(t)}, \textrm{ for any } t\in[0,T],
	\end{displaymath}
	where $q_n^+(t)=\frac{[2^n t]}{2^n}T$. Let $X$ be an $\mathcal{E}$-process. For any stopping time $\tau\in \mathcal{S}^F_0$, where $\mathcal{S}_0^F$ is the collection of all stopping times taking values in a finite set, by Condition (D2) in Definition \ref{D2.1}, it is easy to check that $X_\tau\in$Dom$_\tau(\mathcal{E})$. For any $\xi\in$Dom$(\mathcal{E})$, $\{X^\xi_t\}_{t\in[0,T]}$ is an $\mathcal{E}$-process, where $X_t^\xi=\mathcal{E}_t[\xi]$. Therefore, for any $\tau\in\mathcal{S}_0^F$, we may define an operator $\mathcal{E}_\tau[\cdot]:\textrm{Dom}(\mathcal{E})\mapsto\textrm{Dom}_\tau(\mathcal{E})$ by
	\begin{displaymath}
	\mathcal{E}_\tau[\xi]:=X^\xi_\tau, \textrm{ for any } \xi\in\textrm{Dom}(\mathcal{E}).
	\end{displaymath}
	
	In order to make the operator $\mathcal{E}_\tau[\cdot]$ well-defined for any stopping time $\tau$, we need to put the following hypotheses on the $\mathbb{F}$-expectation and the associated domain Dom$(\mathcal{E})$.
	
	\begin{description}
		\item[(H0)] For any $A\in\mathcal{F}_T$ with $P(A)>0$, we have $\lim_{n\rightarrow\infty}\mathcal{E}[nI_A]=\infty$;
		\item[(H1)] For any $\xi\in\textrm{Dom}^+(\mathcal{E})$ and any $\{A_n\}_{n\in\mathbb{N}}\subset \mathcal{F}_T$ with $\lim_{n\rightarrow\infty}\uparrow I_{A_n}=1$, a.s., we have $\lim_{n\rightarrow\infty}\uparrow\mathcal{E}[\xi I_{A_n}]=\mathcal{E}[\xi]$;
		\item[(H2)] For any $\xi,\eta\in\textrm{Dom}^+(\mathcal{E})$ and any $\{A_n\}_{n\in\mathbb{N}}\subset \mathcal{F}_T$ with $\lim_{n\rightarrow\infty}\downarrow I_{A_n}=0$, a.s., we have $\lim_{n\rightarrow\infty}\downarrow\mathcal{E}[\xi+\eta I_{A_n}]=\mathcal{E}[\xi]$;
		\item[(H3)] For any $\xi\in\textrm{Dom}^+(\mathcal{E})$ and $\tau\in\mathcal{S}_0$, $X^{\xi,+}_\tau\in \textrm{Dom}^+(\mathcal{E})$;
		\item[(H4)] $\textrm{Dom}(\mathcal{E})\in \widetilde{\mathscr{D}_T}:=\{\Lambda\in\mathscr{D}_T:\mathbb{R}\subset\Lambda\}$.
	\end{description}
	
	Under the above assumptions, \cite{BY1} shows that the process $\{X_t^{\xi,+}\}_{t\in[0,T]}$ is an RCLL modification of $\{X_t^\xi\}_{t\in[0,T]}$ for any $\xi\in \textrm{Dom}^+(\mathcal{E})$. Then for any stopping time $\tau\in\mathcal{S}_0$, the conditional $\mathbb{F}$-expectation of $\xi\in \textrm{Dom}^+(\mathcal{E})$ at $\tau$ is given by
	\begin{displaymath}
	\widetilde{\mathcal{E}}_\tau[\xi]:=X^{\xi,+}_\tau.
	\end{displaymath}
	It is easy to check that $\widetilde{\mathcal{E}}_\tau[\cdot]$ is an operator from $\textrm{Dom}^+(\mathcal{E})$ to $\textrm{Dom}^+(\mathcal{E})_\tau:=\textrm{Dom}^+(\mathcal{E})\cap L^0(\mathcal{F}_\tau)$. Furthermore, $\{\widetilde{\mathcal{E}}_t[\cdot]\}_{t\in[0,T]}$ defines an $\mathbb{F}$-expectation and for any $\xi\in \textrm{Dom}^+(\mathcal{E})$, $\{\widetilde{\mathcal{E}}_t[\xi]\}_{t\in[0,T]}$ is an RCLL modification of $\{\mathcal{E}_t[\xi]\}_{t\in[0,T]}$. For simplicity, we still denote $\widetilde{\mathcal{E}}_t[\cdot]$ by $\mathcal{E}_t[\cdot]$ and it satisfies the following properties.
	
	\begin{proposition}\label{P2.7}
		For any $\xi,\eta\in \textrm{Dom}^+(\mathcal{E})$ and $\tau\in \mathcal{S}_0$, it holds that
		\begin{description}
			\item[(1)] ``Monotonicity (positively strict)": $\mathcal{E}_\tau[\xi]\leq \mathcal{E}_\tau[\eta]$, a.s. if $\xi\leq \eta$, a.s.; Moreover, if $\mathcal{E}_\sigma[\xi]=\mathcal{E}_\sigma[\eta]$, a.s. for some $\sigma\in\mathcal{S}_0$, then $\xi=\eta$, a.s.;
			\item[(2)] ``Time consistency": $\mathcal{E}_\sigma[\mathcal{E}_\tau[\xi]]=\mathcal{E}_\sigma[\xi]$, a.s. for any $\tau,\sigma\in\mathcal{S}_0$ with $\sigma\leq \tau$;
			\item[(3)] ``Zero-one law": $\mathcal{E}_\tau[\xi I_A]=\mathcal{E}_\tau[\xi]I_A$, a.s. for any $A\in\mathcal{F}_\tau$;
			\item[(4)] ``Translation invariance": $\mathcal{E}_\tau[\xi+\eta]=\mathcal{E}_\tau[\xi]+\eta$, a.s. if $\eta\in\textrm{Dom}^+_\tau(\mathcal{E})$;
			\item[(5)] ``local property": $\mathcal{E}_\tau[\xi I_A+\eta I_{A^c}]=\mathcal{E}_\tau[\xi]I_A+\mathcal{E}_\tau[\eta]I_{A^c}$, a.s. for any $A\in\mathcal{F}_\tau$;
			\item[(6)] ``Constant-preserving": $\mathcal{E}_\tau[\xi]=\xi$, a.s., if $\xi\in\textrm{Dom}^+_\tau(\mathcal{E})$.
		\end{description}
	\end{proposition}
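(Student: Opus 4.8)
The plan is to prove every assertion first for stopping times in $\mathcal{S}_0^F$ (those with finite range), where $\mathcal{E}_\tau[\cdot]$ is an honest finite combination of the deterministic-time operators, and then to lift each identity to an arbitrary $\tau\in\mathcal{S}_0$ by approximating $\tau$ from above by the finite-valued dyadic stopping times $\tau_n=q_n^+(\tau)\downarrow\tau$ and exploiting the right-continuity of the RCLL modification. The only limiting mechanism I will use is the structural input already secured in the excerpt: for each $\zeta\in\textrm{Dom}^+(\mathcal{E})$ the process $\{\mathcal{E}_t[\zeta]\}_{t\in[0,T]}$ admits an RCLL version $\{X^{\zeta,+}_t\}$, so that $\mathcal{E}_{\tau_n}[\zeta]=X^{\zeta,+}_{\tau_n}\to X^{\zeta,+}_\tau=\mathcal{E}_\tau[\zeta]$ a.s. whenever $\tau_n\downarrow\tau$.

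For $\tau\in\mathcal{S}_0^F$ with values $t_1<\dots<t_k$ and $A_i=\{\tau=t_i\}\in\mathcal{F}_{t_i}$, one has $\mathcal{E}_\tau[\zeta]=\sum_{i=1}^k\mathcal{E}_{t_i}[\zeta]I_{A_i}$, and properties (1),(3)--(6) drop out of the deterministic axioms by conditioning on each $A_i$. Indeed, monotonicity is (A1) applied on each slice; the zero-one law follows from (A3) at $t_i$ together with $A\cap A_i\in\mathcal{F}_{t_i}$ for $A\in\mathcal{F}_\tau$; translation invariance follows from (A4) after noting $\eta I_{A_i}\in\Lambda_{t_i}$; and the local property and constant-preserving are analogous reductions to (A3)--(A4). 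Time consistency for $\sigma\le\tau$ \emph{both} in $\mathcal{S}_0^F$ is obtained by decomposing over the joint values $\{\sigma=s,\tau=t\}$ with $s\le t$ and invoking (A2) slicewise.

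To pass to a general $\tau$, I fix $\tau_n=q_n^+(\tau)\downarrow\tau$ and record that $\tau\le\tau_n$ gives $\mathcal{F}_\tau\subset\mathcal{F}_{\tau_n}$ and $\textrm{Dom}^+_\tau(\mathcal{E})\subset\textrm{Dom}^+_{\tau_n}(\mathcal{E})$. Hence for $A\in\mathcal{F}_\tau$ and $\eta\in\textrm{Dom}^+_\tau(\mathcal{E})$ the finite-level identities (1),(3),(4),(5),(6) hold with $\tau_n$ in place of $\tau$, and each side has the form $X^{\cdot,+}_{\tau_n}$ for a random variable lying in $\textrm{Dom}^+(\mathcal{E})$ (one checks that $\xi I_A$, $\xi+\eta$ and $\xi I_A+\eta I_{A^c}$ are in $\textrm{Dom}^+(\mathcal{E})$ using (D2)--(D3)). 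Letting $n\to\infty$ and applying the right-continuity above transfers all five identities to $\tau$. The strict part of (1) is then deduced from (2): if $\xi\le\eta$ and $\mathcal{E}_\sigma[\xi]=\mathcal{E}_\sigma[\eta]$, then applying $\mathcal{E}=\mathcal{E}_0$ and time consistency yields $\mathcal{E}[\xi]=\mathcal{E}[\eta]$, whence $\xi=\eta$ by the strict monotonicity at time $0$ in (A1).

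The main obstacle is time consistency (2) for two \emph{arbitrary} stopping times $\sigma\le\tau$, since the limiting argument now moves the inner variable as well as the outer time. I would proceed in two steps. First, for $\sigma\in\mathcal{S}_0$ general but $\tau\in\mathcal{S}_0^F$ fixed, set $\zeta=\mathcal{E}_\tau[\xi]\in\textrm{Dom}^+(\mathcal{E})$ (a single fixed random variable, using (H3)) and approximate $\sigma$ by the finite-valued times $\sigma_n\wedge\tau\downarrow\sigma$ with $\sigma_n=q_n^+(\sigma)$; the finite-level identity $\mathcal{E}_{\sigma_n\wedge\tau}[\zeta]=\mathcal{E}_{\sigma_n\wedge\tau}[\xi]$ passes to the limit by right-continuity of $X^{\zeta,+}$ and $X^{\xi,+}$, giving $\mathcal{E}_\sigma[\mathcal{E}_\tau[\xi]]=\mathcal{E}_\sigma[\xi]$. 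Second, for general $\tau$ I approximate by $\tau_n=q_n^+(\tau)\downarrow\tau$ and apply the first step to get $\mathcal{E}_\sigma[\mathcal{E}_{\tau_n}[\xi]]=\mathcal{E}_\sigma[\xi]$ for every $n$; the remaining point is the convergence $\mathcal{E}_\sigma[\mathcal{E}_{\tau_n}[\xi]]\to\mathcal{E}_\sigma[\mathcal{E}_\tau[\xi]]$ as $\mathcal{E}_{\tau_n}[\xi]\to\mathcal{E}_\tau[\xi]$ a.s. This closability is precisely the delicate step, equivalent to the optional-sampling theorem for the $\mathcal{E}$-martingale $\{\mathcal{E}_t[\xi]\}$; it is controlled through the continuity hypotheses (H1)--(H2) along the approximating sequence, as established in the regularization of \cite{BY1}. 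Once this convergence is available, the identity $\mathcal{E}_\sigma[\mathcal{E}_\tau[\xi]]=\mathcal{E}_\sigma[\xi]$ follows, completing (2) and hence the whole proposition.
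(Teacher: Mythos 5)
Your proposal cannot be compared with a proof in the paper, because the paper gives none: Proposition \ref{P2.7} is recalled verbatim from \cite{BY1} in the appendix. Measured against the construction the appendix summarizes, your two-stage scheme is exactly the right one: slicewise verification at finite-valued stopping times via (A1)--(A4), then passage to a general $\tau$ along $\tau_n=q_n^+(\tau)\downarrow\tau$ through the RCLL modification $X^{\cdot,+}$. Your treatment of (1), (3)--(6) is correct (one cosmetic remark: the finite-level time consistency is not literally ``(A2) slicewise'' but the standard backward induction over the values of the inner stopping time, combining (A2) with the zero-one law (A3)), and so is Step 1 of (2), where freezing the inner variable $\zeta=\mathcal{E}_\tau[\xi]$ at a finite-valued $\tau$ makes the limit in $\sigma$ harmless; the deduction of the strict part of (1) from (2) and (A1) is also fine, with (H3) guaranteeing that $\mathcal{E}_\sigma[\xi]$ lies in $\textrm{Dom}^+(\mathcal{E})$ so that $\mathcal{E}_0$ may be applied.

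The one real issue is the step you defer: the convergence $\mathcal{E}_\sigma[\mathcal{E}_{\tau_n}[\xi]]\rightarrow\mathcal{E}_\sigma[\mathcal{E}_\tau[\xi]]$, which you outsource to ``the regularization of \cite{BY1}''. In context this is circular --- it is the crux of the very statement under proof, which is itself only quoted from \cite{BY1} --- but the gap is closable with tools the appendix does state, namely Proposition \ref{P2.8}, and you should close it rather than cite it away. Fatou's lemma (the a.s.\ limit $\mathcal{E}_\tau[\xi]=X^{\xi,+}_\tau$ belongs to $\textrm{Dom}^+(\mathcal{E})$ by (H3)) gives
\begin{displaymath}
\mathcal{E}_\sigma[\mathcal{E}_\tau[\xi]]\leq \liminf_{n\rightarrow\infty}\mathcal{E}_\sigma[\mathcal{E}_{\tau_n}[\xi]]=\mathcal{E}_\sigma[\xi].
\end{displaymath}
For the reverse inequality, truncate: for $m\in\mathbb{N}$ one has $\xi\wedge m\in\textrm{Dom}^+(\mathcal{E})$ by (D3) and $\mathcal{E}_{\tau_n}[\xi\wedge m]\leq m$ at the finite-valued $\tau_n$, with the constant $m\in\textrm{Dom}^+(\mathcal{E})$ by (H4); hence the dominated convergence half of Proposition \ref{P2.8}, applied to $\mathcal{E}_{\tau_n}[\xi\wedge m]\rightarrow\mathcal{E}_\tau[\xi\wedge m]$ a.s., upgrades your Step 1 (applied to $\xi\wedge m$) to the identity $\mathcal{E}_\sigma[\mathcal{E}_\tau[\xi\wedge m]]=\mathcal{E}_\sigma[\xi\wedge m]$, i.e.\ time consistency for bounded rewards. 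Then monotonicity at $\tau$ and at $\sigma$ (already established) yields $\mathcal{E}_\sigma[\mathcal{E}_\tau[\xi]]\geq \mathcal{E}_\sigma[\xi\wedge m]$, and one more application of Fatou to $\xi\wedge m\uparrow\xi$ sends the right-hand side to $\mathcal{E}_\sigma[\xi]$, giving equality. With this insertion your argument is complete and self-contained modulo the facts the appendix records ((H0)--(H4) and Propositions \ref{P2.2}, \ref{P2.8}).
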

	
	\begin{proposition}\label{P2.2}
		Let $X$ be a nonnegative $\mathcal{E}$-supermartingale. Then we have
		\begin{description}
			\item[(1)] Assume either that $\esssup_{t\in \mathcal{I}}X_t\in\textrm{Dom}^+(\mathcal{E})$ (where $\mathcal{I}$ is the set of all dyadic rational numbers less than $T$) or that for any sequence $\{\xi_n\}_{n\in\mathbb{N}}\subset\textrm{Dom}^+(\mathcal{E})$ convergences a.s. to some $\xi\in L^0(\mathcal{F}_T)$,
			\begin{displaymath}
			\liminf_{n\rightarrow\infty}\mathcal{E}[\xi_n]<\infty \textrm{ implies } \xi\in \textrm{Dom}^+(\mathcal{E}).
			\end{displaymath}
			Then for any $\tau\in\mathcal{S}_0$, $X^+_\tau\in \textrm{Dom}^+(\mathcal{E})$;
			\item[(2)] If $X_t^+\in \textrm{Dom}^+(\mathcal{E})$ for any $t\in[0,T]$, then $X^+$ is an RCLL $\mathcal{E}$-supermartingale such that for any $t\in[0,T]$, $X_t^+\leq X_t$, a.s.;
			\item[(3)] Moreover, if the function $t\mapsto\mathcal{E}[X_t]$ from $[0,T]$ to $\mathbb{R}$ is right-continuous, then $X^+$ is an RCLL modification of $X$. Conversely, if $X$ has a right-continuous modification, then the function $t\mapsto\mathcal{E}[X_t]$ is right-continuous.
		\end{description}
	\end{proposition}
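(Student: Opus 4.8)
The plan is to follow the classical Doob regularization of supermartingales, replacing conditional expectations by the operators $\mathcal{E}_\tau[\cdot]$ — which at this stage of the development are only available for $\tau\in\mathcal{S}_0^F$ — and the linear estimates by the Fatou/dominated-convergence tools of Proposition \ref{P2.8}. The foundational step is a nonlinear upcrossing (downcrossing) estimate. Using the optional-sampling inequality for $\mathcal{E}$-supermartingales at finite-valued stopping times (which follows from Definition \ref{D2.3} by backward induction over the finite value set together with time-consistency and the local property), one bounds the $\mathcal{E}$-expected number of upcrossings of each rational interval by the restriction of $X$ to the dyadic set $\mathcal{I}$. Since $X\ge 0$ and $\mathcal{E}[X_t]\le\mathcal{E}[X_0]<\infty$, this forces, off a single null set, the existence and finiteness of all one-sided limits of $t\mapsto X_t$ along $\mathcal{I}$; in particular $X_t^+=\liminf_n X_{q_n^+(t)}$ is an honest finite right limit a.s., so $X^+$ is well defined and RCLL by construction.

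For (1), I would fix $\tau\in\mathcal{S}_0$ and set $\eta_n:=\inf_{m\ge n}X_{q_m^+(\tau)}$. Each $q_m^+(\tau)\in\mathcal{S}_0^F$, so $X_{q_m^+(\tau)}$ is a finite combination $\sum_k X_{t_k}I_{A_k}$ of dyadic values and hence lies in $\textrm{Dom}^+(\mathcal{E})$ by (D2); since $0\le\eta_n\le X_{q_n^+(\tau)}$, also $\eta_n\in\textrm{Dom}^+(\mathcal{E})$ by (D3). The sequence $\eta_n$ increases a.s. to $X_\tau^+$, and optional sampling together with monotonicity gives $\mathcal{E}[\eta_n]\le\mathcal{E}[X_{q_n^+(\tau)}]\le\mathcal{E}[X_0]<\infty$. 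Under the second alternative this is exactly the hypothesis needed to conclude $X_\tau^+\in\textrm{Dom}^+(\mathcal{E})$, while under the first alternative the conclusion is even more direct, since $0\le X_\tau^+\le\esssup_{t\in\mathcal{I}}X_t\in\textrm{Dom}^+(\mathcal{E})$ and (D3) applies at once.

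For (2), right-continuity and existence of left limits of $X^+$ are already built into the regularization step, so $X^+$ is RCLL. The domination $X_t^+\le X_t$ follows by applying Fatou (Proposition \ref{P2.8}) to obtain $\mathcal{E}_t[X_t^+]=\mathcal{E}_t[\liminf_{s\downarrow t}X_s]\le\liminf_{s\downarrow t}\mathcal{E}_t[X_s]\le X_t$, combined with $\mathcal{E}_t[X_t^+]=X_t^+$ by constant-preserving, since $X_t^+$ is $\mathcal{F}_{t+}=\mathcal{F}_t$-measurable by right-continuity of the filtration. To see that $X^+$ is an $\mathcal{E}$-supermartingale, I would pass to the limit in the dyadic inequality $\mathcal{E}_s[X_{u_n}]\le X_s$ as $u_n\downarrow t$, first lowering the terminal time to $t$ by Fatou, then lowering the conditioning time to arbitrary $s\le t$ via time-consistency and right-continuity; this interchange of $\mathcal{E}$ with the a.s. right limits is the step requiring the most care.

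For (3), the inequality $X_t^+\le X_t$ from (2) gives $\mathcal{E}[X_t^+]\le\mathcal{E}[X_t]$; the reverse inequality is obtained by writing $\mathcal{E}[X_t^+]=\lim_{s\downarrow t}\mathcal{E}[X_s]$ — the limit equalling $\mathcal{E}[X_t]$ precisely by the assumed right-continuity of $s\mapsto\mathcal{E}[X_s]$ — through the dominated convergence theorem (Proposition \ref{P2.8}) applied to the family $\{X_s\}_{s\downarrow t}$. Hence $\mathcal{E}[X_t^+]=\mathcal{E}[X_t]$, and since $X_t^+\le X_t$ the strict part of the monotonicity axiom forces $X_t^+=X_t$ a.s., so $X^+$ is a modification. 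Conversely, if $\tilde X$ is a right-continuous modification, then $\mathcal{E}[X_{s_n}]=\mathcal{E}[\tilde X_{s_n}]\to\mathcal{E}[\tilde X_t]=\mathcal{E}[X_t]$ along $s_n\downarrow t$ by dominated convergence, yielding right-continuity of $t\mapsto\mathcal{E}[X_t]$. The principal obstacle throughout is precisely this regularization and limit-passing machinery: establishing the nonlinear upcrossing bound and justifying the interchange of $\mathcal{E}$ with a.s. right limits — in particular exhibiting the domination required by Proposition \ref{P2.8} — all while only finite-valued stopping times are a priori admissible in $\mathcal{E}_\tau[\cdot]$.
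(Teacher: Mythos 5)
First, a bookkeeping point: the paper contains no proof of Proposition \ref{P2.2} --- it is recalled verbatim from \cite{BY1} in the Appendix --- so your attempt can only be compared with the regularization argument of that reference, whose overall template (dyadic right limits, optional sampling at finitely-valued stopping times, Fatou/dominated convergence from Proposition \ref{P2.8}) you have correctly identified. Your part (1) is sound under both alternatives, and your derivation of $X_t^+\leq X_t$ in (2) is fine. But there are two genuine gaps.

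The first is your foundational step. You assert that optional sampling ``bounds the $\mathcal{E}$-expected number of upcrossings'' as in Doob's inequality. That is exactly what fails here: the classical bound $(b-a)E[U_{[a,b]}]\leq E[(X_T-a)^-]$ is obtained by summing the gains of a discrete stochastic integral and using linearity of the expectation (Dubins' geometric variant instead uses positive homogeneity), and neither additivity nor homogeneity is available --- (H6)--(H7) are \emph{not} hypotheses of this proposition, which is stated under (H0)--(H4) with (H1)--(H2). So ``the $\mathcal{E}$-expected number of upcrossings'' admits no Doob-type estimate, and the RCLL assertion in (2) is left unsupported. What does work, and is how the argument in \cite{BY1} runs, is a qualitative chain on the nested events $B_n=\{U_{[a,b]}\geq n\}$: optional sampling at (finitely-valued approximations of) the crossing times together with the zero-one law gives $\mathcal{E}[b\,I_{B_n}]\leq \mathcal{E}[a\,I_{B_{n-1}}]$; writing $B_n=B_\infty\cup(B_n\setminus B_\infty)$ and invoking (H2) passes this to $\mathcal{E}[b\,I_{B_\infty}]\leq \mathcal{E}[a\,I_{B_\infty}]$, and the strict part of monotonicity (A1) then forces $P(B_\infty)=0$; a.s.\ boundedness of the paths along $\mathcal{I}$ is obtained separately from (H0) via the first-exceedance times. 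Some replacement of this kind is indispensable.

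The second gap is the dominated convergence in (3). Proposition \ref{P2.8} requires a dominating element of $\textrm{Dom}^+(\mathcal{E})$; one exists only under the \emph{first} alternative of (1), whereas (2)--(3) assume merely $X_t^+\in\textrm{Dom}^+(\mathcal{E})$, so your identity $\mathcal{E}[X_t^+]=\lim_{s\downarrow t}\mathcal{E}[X_s]$ is unjustified in the stated generality. The standard repair bootstraps through the $\mathcal{E}$-martingale case: for dyadic $u\in(t,s)$ one has $X_u\geq \mathcal{E}_u[X_s]$, hence $X_t^+\geq X_t^{\xi,+}$ with $\xi=X_s$; since $X^{\xi,+}$ is a modification of the $\mathcal{E}$-martingale $X^\xi$ (the martingale instance, proved first, where $t\mapsto\mathcal{E}[X_t^\xi]$ is constant), this yields $\mathcal{E}[X_t^+]\geq \mathcal{E}[X_s]$ for every $s>t$, and right-continuity of $t\mapsto\mathcal{E}[X_t]$, $X_t^+\leq X_t$ and strict monotonicity finish the modification claim. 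The same bootstrapping legitimizes the interchange of $\mathcal{E}_s$ with right limits in your supermartingale step for (2), which you flagged but did not resolve. Finally, the converse half of (3) needs no dominated convergence at all: $t\mapsto\mathcal{E}[X_t]$ is nonincreasing, and Fatou applied along $s_n\downarrow t$ to the right-continuous modification already gives $\mathcal{E}[X_t]\leq\liminf_n\mathcal{E}[X_{s_n}]\leq\mathcal{E}[X_t]$.
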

	
	The Fatou Lemma and the dominated convergence theorem still hold for the  conditional $\mathbb{F}$-expectation $\mathcal{E}_\tau[\cdot]$.
	\begin{proposition}\label{P2.8}
		Let $\{\xi_n\}_{n\in\mathbb{N}}\subset \textrm{Dom}^+(\mathcal{E})$ converge a.s. to some $\xi\in \textrm{Dom}^+(\mathcal{E})$. Then for any $\tau\in\mathcal{S}_0$, we have
		\begin{displaymath}
		\mathcal{E}_\tau[\xi]\leq \liminf_{n\rightarrow\infty}\mathcal{E}_\tau[\xi_n].
		\end{displaymath}
		Furthermore, if there exists an $\eta\in \textrm{Dom}^+(\mathcal{E})$ such that $\xi_n\leq \eta$ a.s. for any $n\in\mathbb{N}$, then the limit $\xi\in \textrm{Dom}^+(\mathcal{E})$ and for any $\tau\in \mathcal{S}_0$, we have
		\begin{displaymath}
		\mathcal{E}_\tau[\xi]= \lim_{n\rightarrow\infty}\mathcal{E}_\tau[\xi_n].
		\end{displaymath}
	\end{proposition}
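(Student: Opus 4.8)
The plan is to deduce both assertions from two \emph{unconditional} convergence lemmas for $\mathcal{E}=\mathcal{E}_0$ and then to lift them to the conditional operator $\mathcal{E}_\tau$ using the time consistency and the strict monotonicity recorded in Proposition \ref{P2.7}. The two lemmas are: (MC) if $\zeta_n\uparrow\zeta$ a.s. with $\zeta_n,\zeta\in\textrm{Dom}^+(\mathcal{E})$ then $\mathcal{E}[\zeta_n]\uparrow\mathcal{E}[\zeta]$; and (CA) if $\beta_n\downarrow\xi$ a.s. with $\beta_n\leq\eta$ for some $\eta\in\textrm{Dom}^+(\mathcal{E})$, then $\mathcal{E}[\beta_n]\downarrow\mathcal{E}[\xi]$. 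In both cases monotonicity gives one inequality for free, so only the nontrivial direction needs work.

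First I would prove (MC). Fix $\varepsilon>0$ and set $B_n=\{\zeta-\zeta_n>\varepsilon\}$; since $\zeta-\zeta_n$ decreases, $B_n\downarrow$ with $I_{B_n}\to 0$, and one checks the pointwise bound $\zeta_n\geq \zeta I_{B_n^c}-\varepsilon$ a.s. By translation invariance and (H1) applied to $\zeta I_{B_n^c}\uparrow\zeta$, this gives $\lim_n\mathcal{E}[\zeta_n]\geq\mathcal{E}[\zeta]-\varepsilon$, and $\varepsilon\downarrow 0$ finishes. Dually, for (CA) fix $\varepsilon>0$ and set $A_n=\{\beta_n-\xi>\varepsilon\}\downarrow\emptyset$; the splitting $\beta_n\leq\xi+\varepsilon+\eta I_{A_n}$ combined with translation invariance and (H2) yields $\limsup_n\mathcal{E}[\beta_n]\leq\mathcal{E}[\xi]+\varepsilon$, and again $\varepsilon\downarrow 0$ finishes. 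Throughout, membership of the auxiliary variables ($\zeta I_{B_n^c}$, $\eta I_{A_n}$, constants, etc.) in $\textrm{Dom}^+(\mathcal{E})$ is checked from (D2), (D3) and $\mathbb{R}\subset\textrm{Dom}(\mathcal{E})$ in (H4).

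Next I would transfer (MC) and (CA) to $\mathcal{E}_\tau$ for arbitrary $\tau\in\mathcal{S}_0$. In the monotone case, $\mathcal{E}_\tau[\zeta_n]\uparrow L_\tau\leq\mathcal{E}_\tau[\zeta]$ by monotonicity, and $L_\tau\in\textrm{Dom}^+(\mathcal{E})$ since $0\leq L_\tau\leq\mathcal{E}_\tau[\zeta]\in\textrm{Dom}^+(\mathcal{E})$ by (H3). Applying the unconditional (MC) to $\mathcal{E}_\tau[\zeta_n]\uparrow L_\tau$ and using time consistency $\mathcal{E}[\mathcal{E}_\tau[\cdot]]=\mathcal{E}[\cdot]$ gives $\mathcal{E}[L_\tau]=\lim_n\mathcal{E}[\zeta_n]=\mathcal{E}[\zeta]=\mathcal{E}[\mathcal{E}_\tau[\zeta]]$; since $L_\tau\leq\mathcal{E}_\tau[\zeta]$ with equal (unconditional) expectations, the strict monotonicity of Proposition \ref{P2.7}(1) forces $L_\tau=\mathcal{E}_\tau[\zeta]$ a.s. The statement (CA) lifts to $\mathcal{E}_\tau$ by the identical argument (with $\geq$ replaced by $\leq$ and $\mathcal{E}_\tau[\beta_n]\leq\mathcal{E}_\tau[\eta]\in\textrm{Dom}^+(\mathcal{E})$ providing the domination needed to invoke unconditional (CA)).

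With these conditional tools the proposition follows. For the Fatou inequality, set $\zeta_n=\inf_{m\geq n}\xi_m$, which lies in $\textrm{Dom}^+(\mathcal{E})$ by (D3) since $\zeta_n\leq\xi_n$; as $\zeta_n\uparrow\xi$, conditional (MC) gives $\mathcal{E}_\tau[\zeta_n]\uparrow\mathcal{E}_\tau[\xi]$, while $\zeta_n\leq\xi_m$ for $m\geq n$ gives $\mathcal{E}_\tau[\zeta_n]\leq\inf_{m\geq n}\mathcal{E}_\tau[\xi_m]$; letting $n\to\infty$ yields $\mathcal{E}_\tau[\xi]\leq\liminf_n\mathcal{E}_\tau[\xi_n]$. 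For the dominated case, $0\leq\xi\leq\eta$ gives $\xi\in\textrm{Dom}^+(\mathcal{E})$ by (D3); Fatou supplies the $\liminf$ bound, and applying conditional (CA) to $\beta_n=\sup_{m\geq n}\xi_m\downarrow\xi$ (with $\beta_n\leq\eta$) together with $\mathcal{E}_\tau[\xi_m]\leq\mathcal{E}_\tau[\beta_n]$ for $m\geq n$ gives $\limsup_n\mathcal{E}_\tau[\xi_n]\leq\mathcal{E}_\tau[\xi]$, whence the equality. I expect the main obstacle to be the second paragraph: hypotheses (H1) and (H2) only assert continuity for indicator-set perturbations of a \emph{fixed} variable, so the crux is the $\varepsilon$-splitting that reduces an arbitrary monotone sequence to precisely that form while keeping every intermediate random variable inside $\textrm{Dom}^+(\mathcal{E})$.
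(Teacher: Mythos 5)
Your proof is correct. Note first that the paper itself does not prove Proposition \ref{P2.8}: it appears in the appendix, which only \emph{recalls} results from \cite{BY1}, so there is no in-paper argument to compare against. Judged on its own terms, your argument is complete and uses exactly the ingredients the paper makes available. The two unconditional lemmas are the right reduction: the pointwise bounds $\zeta_n\geq \zeta I_{B_n^c}-\varepsilon$ and $\beta_n\leq \xi+\varepsilon+\eta I_{A_n}$ are valid (the first because $\zeta I_{B_n^c}-\varepsilon\leq -\varepsilon\leq 0\leq\zeta_n$ on $B_n$, the second because $\beta_n\leq\eta$ and $\xi+\varepsilon\geq 0$ on $A_n$), they put the perturbation in precisely the indicator form that (H1) and (H2) handle, and the domain memberships you invoke do follow from (D2), (D3) and (H4), with translation invariance absorbing the constant $\varepsilon$. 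The lift to $\mathcal{E}_\tau$ via time consistency plus the "positively strict" monotonicity of Proposition \ref{P2.7}(1) (applied at $\sigma=0$ to the ordered pair $L_\tau\leq\mathcal{E}_\tau[\zeta]$, both in $\textrm{Dom}^+(\mathcal{E})$ by (H3) and (D3)) is sound, and the final passage to $\liminf/\limsup$ through $\inf_{m\geq n}\xi_m$ and $\sup_{m\geq n}\xi_m$ is standard. This is, as far as one can tell, essentially the route taken in \cite{BY1} itself.
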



\begin{thebibliography}{99}
		\bibitem{BY1} Bayraktar, E. and Yao, S. (2011) Optimal stopping for nonlinear expectations-part I. Stochastic Processes and their Applications, 121, 185-211.
		
		\bibitem{BY2} Bayraktar, E. and Yao, S. (2011) Optimal stopping for nonlinear expectations-part II. Stochastic Processes and their Applications, 121, 212-264.
		
		\bibitem{BS} Bender, C. and Schoenmakers, J. (2006) An iterative method for multiple stopping: convergence and stability. Adv. Appl. Probab. 38, 729-749.
		
		\bibitem{CT} Carmona, R. and Touzi, N. (2008) Optimal multiple stopping and valuation of swing options. Math. Finan., 18, 239-268.
		
		\bibitem{CR} Cheng, X. and Riedel, F. (2013) Optimal stopping under ambiguity in continuous time. Math. Finan. Econ., 7, 29-68.
		
		\bibitem{CHMP} Coquet, F., Hu, Y., M\'{e}min, J. and Peng, S. (2002) Filtration-consistent nonlinear expectations and related $g$-expectations. Probab. Theory Relat. Fields, 123, 1-27.
		
		
		\bibitem{E} El Karoui, N. (1981) Les aspects probabilistes du controle stochastique. Ecole d'Et\'{e} de Probabilit\'{e}s de Saint-Flour IX. Lecture Notes in Math 876, 73-238. Springer Verlag, Berlin.
		
		
		
		\bibitem{KQR} Kobylanski, M., Quenez, M.C. and Rouy-Mironescu, E. (2011) Optimal multiple stopping time problem. Ann. Appl. Probab., 21(4), 1364-1399.
		
		
		\bibitem{MH} Meinshausen, N. and Hambly, B.M. (2004) Monte Carlo methods for the valuation of  multiple-exercise options. Math. Finance, 14, 557-583.
		
		
		
		\bibitem{PS} Peskir, G. and Shiryaev, A.N. (2006) Optimal Stopping and Free-Boundary Problems. Birkh\"{a}user.
		
		
		
		
	\end{thebibliography}
\end{document}